\documentclass{article}

\usepackage{amsmath}
\usepackage{amsthm}
\usepackage{amssymb}
\usepackage{amsfonts}
\usepackage{mathrsfs}

\usepackage{subeqnarray}
\usepackage{bm}
\usepackage{tikz}
\usepackage{dsfont}
\usepackage{upgreek}
\usepackage{booktabs}
\usepackage{fancyhdr}
\usepackage{multirow}
\usepackage{multicol}
\usepackage{float}
\usepackage{cases}

\usepackage{enumerate}
\usepackage{subfig}

\usepackage[ruled, vlined, linesnumbered, inoutnumbered, algo2e]{algorithm2e}

\usepackage{bbding}
\usepackage{caption}

\usepackage[mathlines]{lineno}

\usepackage[top = 2.5cm, bottom = 2.5cm, left = 3cm, right = 3cm]{geometry}

\usepackage{graphics, graphicx, epsf, epstopdf}
\graphicspath{{./Figures/}}
\ifpdf
\DeclareGraphicsExtensions{.eps,.pdf,.png,.jpg}
\else
\DeclareGraphicsExtensions{.eps}
\fi

\usepackage[breaklinks, colorlinks, linkcolor = black, citecolor = black, urlcolor = black]{hyperref}

\usepackage{xcolor}
\usepackage{array}
\usepackage{placeins}
\usepackage{pgfplots}
\pgfplotsset{compat = 1.18}
\usetikzlibrary{calc, arrows.meta, positioning}

\providecommand{\U}[1]{\protect\rule{.1in}{.1in}}

\newtheorem{theorem}{Theorem}[section]

\newtheorem{lemma}[theorem]{Lemma}

\newtheorem{definition}[theorem]{Definition}

\newtheorem{assumption}{Assumption}

\numberwithin{equation}{section}

\newcommand{\N}{\mathbb{N}}
\newcommand{\R}{\mathbb{R}}

\newcommand{\zz}{^\top}
\newcommand{\dist}{\textnormal{dist}}

\newcommand{\va}{{\mathbf{a}}}

\newcommand{\vc}{{\mathbf{c}}}
\newcommand{\vd}{{\mathbf{d}}}

\newcommand{\vf}{{\mathbf{f}}}

\newcommand{\vs}{{\mathbf{s}}}

\newcommand{\vu}{{\mathbf{u}}}

\newcommand{\vx}{{\mathbf{x}}}
\newcommand{\vy}{{\mathbf{y}}}

\newcommand{\vD}{{\mathbf{D}}}
\newcommand{\vE}{{\mathbf{E}}}

\newcommand{\vG}{{\mathbf{G}}}

\newcommand{\vI}{{\mathbf{I}}}
\newcommand{\vJ}{{\mathbf{J}}}

\newcommand{\vW}{{\mathbf{W}}}
\newcommand{\vX}{{\mathbf{X}}}

\newcommand{\cD}{{\mathcal{D}}}

\newcommand{\cG}{{\mathcal{G}}}

\newcommand{\cI}{{\mathcal{I}}}
\newcommand{\cJ}{{\mathcal{J}}}
\newcommand{\cK}{{\mathcal{K}}}

\newcommand{\cN}{{\mathcal{N}}}

\newcommand{\cP}{{\mathcal{P}}}
\newcommand{\cQ}{{\mathcal{Q}}}

\newcommand{\cS}{{\mathcal{S}}}
\newcommand{\cT}{{\mathcal{T}}}
\newcommand{\cU}{{\mathcal{U}}}
\newcommand{\cV}{{\mathcal{V}}}

\newcommand{\dkh}[1]{\left(#1\right)}
\newcommand{\hkh}[1]{\left\{#1\right\}}
\newcommand{\fkh}[1]{\left[#1\right]}
\newcommand{\jkh}[1]{\left\langle#1\right\rangle}
\newcommand{\norm}[1]{\left\|#1\right\|}
\newcommand{\abs}[1]{\left\lvert #1\right\rvert}

\DeclareMathOperator*{\argmax}{arg\,max} 
\DeclareMathOperator*{\argmin}{arg\,min}

\title{Beyond Finite-Sum Optimization: A Primal-Dual Method for Decentralized Finite-Max Optimization\thanks{This work is supported by National Key R\&D Program of China (2023YFA1009300), National Natural Science Foundation of China (12125108, 12021001, 12288201), and RGC grant JLFS/P-501/24 for the CAS AMSS-PolyU Joint Laboratory in Applied Mathematics, CAS-Croucher Funding Scheme for Joint Laboratories.}}

\author{
	Zhouxing Luo\thanks{Institute of Computational Mathematics and Scientific/Engineering Computing, Academy of Mathematics and Systems Science, Chinese Academy of Sciences, Beijing, China (\href{mailto:luozhouxing18@mails.ucas.ac.cn}{luozhouxing18@mails.ucas.ac.cn}).}
	\and
	Lei Wang\thanks{Department of Applied Mathematics, The Hong Kong Polytechnic University, Hung Hom, Kowloon, Hong Kong, China (\href{mailto:wlkings@lsec.cc.ac.cn}{wlkings@lsec.cc.ac.cn}).}
	\and
	Xin Liu\thanks{Institute of Computational Mathematics and Scientific/Engineering Computing, Academy of Mathematics and Systems Science, Chinese Academy of Sciences, Beijing, China (\href{mailto:liuxin@lsec.cc.ac.cn}{liuxin@lsec.cc.ac.cn}).}
}

\date{}

\begin{document}


\maketitle

\begin{abstract}
Many decentralized systems are judged by their worst-performing agents rather than their average performance. 
We study finite-max optimization over a connected undirected network, where each agent knows only its own smooth, possibly nonconvex objective. 
Quadratic regularization yields a smooth finite-sum representation with a scalar dual variable and locally computable clipped weights. 
We develop an alternating primal-dual gradient tracking (APDGT) method, using neighbor communication without any global or centralized projections. 
Under smoothness and level-boundedness assumptions, a localized Lyapunov analysis with vanishing regularization establishes bounded primal iterates and Clarke stationarity of every accumulation point of their network average. 
The averaged sequence contains an $\varepsilon$-stationary point, measured by near-active gradients, within $O (\varepsilon^{-3})$ iterations. 
Synthetic tests and decentralized logistic regression on Fashion-MNIST illustrate convergence behavior and potential improvements in worst-agent performance. 
\end{abstract}

\noindent{\bf Keywords:} decentralized optimization, dual formulation, finite-max minimization, nonsmooth analysis, worst-agent performance.

\section{Introduction}

\label{sec:intro}

Decentralized optimization \cite{chang2020distributed,nedic2018network,wang2024seeking} is commonly formulated through an additive finite-sum objective, in which a network of agents cooperatively minimizes the average of their local costs. 
Such an averaging principle is natural when the aggregate performance is the quantity of primary interest. 
In many distributed systems, however, the overall quality of operation is determined not by the average performance of agents, but by the worst-performing one. 
A single agent with an excessively large loss, delay, energy consumption, or risk may become the bottleneck of the entire system, and improvements in the remaining agents cannot compensate for such a deficiency. 
This observation gives rise to a different optimization principle. 
Rather than balancing local objectives only in an average sense, one seeks a common decision that directly controls the largest local cost.

The worst-agent viewpoint arises in a broad range of practical applications. 
In decentralized learning with heterogeneous data, different agents may represent distinct populations, environments, or data distributions, and minimizing the average empirical risk can conceal poor performance on underrepresented or difficult agents. 
Controlling the largest local loss instead provides a natural mechanism for worst-agent fairness \cite{diana2021minimax,li2023tilted,mohri2019agnostic,sagawa2020distributionally}.
In wireless communication networks, system performance is often limited by the weakest link or the user with the most unfavorable quality-of-service level. 
Then minimax power allocation therefore seeks to reduce the largest local cost and has been studied in distributed cellular networks \cite{srivastava2011distributed}.
A closely related structure appears in edge computing, where synchronous tasks are frequently paced by a straggling worker. 
The whole task cannot be completed until the slowest agent has finished its computation and communication. 
Accordingly, resource allocation and load balancing can be formulated through the minimization of the maximum agent-wise latency, rather than the average one \cite{wang2022gradient}.



Despite their different physical interpretations, these applications share the same underlying mathematical structure. 
A canonical formulation arising from this perspective is the following finite-max model \cite{diana2021minimax,li2020fair,mohri2019agnostic},
\begin{equation}
	\label{opt:fair-max}
	\min_{\vx \in \mathbb{R}^m}
	\left\{
	\Phi (\vx) := \max_{i \in \cN} f_i (\vx)
	\right\},
\end{equation}
where $\cN := \{1, \dotsc, n\}$ is the set of agents and $f_i: \mathbb{R}^m \to \mathbb{R}$ is the local objective function known only to agent $i$. 
We allow the local functions to be smooth but possibly nonconvex, so that $\Phi$ is in general both nonsmooth and nonconvex. 
By explicitly minimizing the largest local function, model~\eqref{opt:fair-max} provides a principled mechanism for improving worst-case performance, protecting disadvantaged agents, and counteracting the imbalance induced by data heterogeneity. 

While objectives of the form \eqref{opt:fair-max} have been extensively studied in centralized optimization and federated learning, their decentralized counterpart remains considerably less understood, especially in the nonconvex setting. 
To shed further light on the underlying structure, we consider the following equivalent reformulation of problem~\eqref{opt:fair-max},
\begin{equation}
	\label{opt:fair-simplex}
	\min_{\vx \in \mathbb{R}^m} \max_{\vy \in \Delta_n}
	\left\{
	\varphi (\vx, \vy) := \sum_{i = 1}^n y_i f_i (\vx)
	\right\},
	\quad
	\mbox{with }
	\Delta_n := \left\{ \vy \in \mathbb{R}_+^n: \mathbf{1}_n^\top \vy = 1 \right\}.
\end{equation}
Here, $\R_+^n$ denotes the nonnegative orthant in $\R^n$, and $\mathbf{1}_n$ is the $n$-dimensional vector of all ones. 
The $i$-th component of $\vy$, denoted by $y_i$, is accessible only to agent $i$. 
These local components are coupled through a global constraint $\mathbf{1}_n^\top \vy = 1$. 
It is clear that $\Phi (\vx) = \max_{\vy \in \Delta_n} \varphi (\vx, \vy)$. 
The minimax model \eqref{opt:fair-simplex} exposes the weights assigned to active agents by introducing a simplex variable. 
In a decentralized network, there is no central coordinator that can aggregate all local functions, identify the worst-performing agent, or maintain global auxiliary variables. 
Each agent has access only to its own function and communicates with a limited set of neighbors through a prescribed graph. 
However, in both \eqref{opt:fair-max} and \eqref{opt:fair-simplex}, the active worst-case component is globally coupled across the entire network. 
For instance, the projection onto the simplex $\Delta_n$ cannot be evaluated under a decentralized setting. 
These features make standard decentralized methods, which are primarily designed for minimizing the average of local functions, inadequate for directly solving \eqref{opt:fair-max} or \eqref{opt:fair-simplex}. 
Developing decentralized algorithms that can faithfully capture the global finite-max structure while relying only on local computation and neighbor-to-neighbor communication is therefore both theoretically challenging and practically relevant.


\subsection{Related works}

\label{subsec:related}

Decentralized optimization has developed into a mature paradigm for solving large-scale learning and signal-processing problems over peer-to-peer networks. 
In this setting, agents are allowed to communicate only with their neighbors and no central coordinator is available. 
The canonical model in this literature is the following consensus finite-sum optimization problem, 
\begin{equation}
	\label{opt:average}
	\min_{\vx \in \R^m} \hspace{1mm} \dfrac{1}{n} \sum_{i = 1}^n f_i (\vx),
\end{equation}
where the global objective is additively decomposable across the network. 
Recent years have witnessed the rapid development of decentralized algorithms for different objective functions and feasible sets, such as gradient descent algorithms \cite{nedic2009distributed,yuan2016convergence}, gradient tracking methods \cite{xu2015augmented,nedic2017achieving,qu2018harnessing,wang2022decentralized}, primal-dual frameworks \cite{shi2015extra,chang2014multi,hajinezhad2019perturbed}, proximal gradient approaches \cite{shi2015proximal,scutari2019distributed,wang2025decentralized}, and so on. 
Interested readers can refer to some recent surveys \cite{nedic2018network,chang2020distributed} and references therein. 
Despite these advances, the structural premise of this line of work remains the finite-sum model. 
Consequently, such methods cannot directly address problems~\eqref{opt:fair-max} and \eqref{opt:fair-simplex}, which require specialized mechanisms.

Another more recent stream studies decentralized saddle-point and minimax problems. 
Most existing algorithms \cite{rogozin2021decentralized,xian2021faster,xu2024decentralized,you2025decentralized} consider problems of the finite-sum form
\begin{equation}
	\label{opt:minimax}
	\min_{\vx} \max_{\vy} \hspace{1mm} \dfrac{1}{n} \sum_{i = 1}^n g_i (\vx, \vy),
\end{equation}
or closely related formulations with locally stored copies of the primal and dual variables. 
These works have clarified how decentralized communication interacts with the instability of minimax dynamics. 
Nevertheless, their problem structure is fundamentally different from the finite maximum considered in \eqref{opt:fair-simplex}. 
The inner variable in the usual decentralized minimax model \eqref{opt:minimax} is a shared game or dual variable, whereas the simplex variable in \eqref{opt:fair-simplex} selects the globally worst agents. 
Its projection, normalization, and active-set identification require information from all nodes. 
Thus, the globally coupled simplex geometry is not amenable to a direct application of existing decentralized minimax algorithms for standard finite-sum models.


In decentralized settings, several early works have been devoted to solving the finite-max model~\eqref{opt:fair-max} under convexity assumptions. 
By introducing an epigraph variable $t \in \R$, Srivastava et al. \cite{srivastava2011distributed} reformulate problem~\eqref{opt:fair-max} into the following nonsmooth penalty model,
\begin{equation}
	\label{opt:penalty}
	\min_{\vx \in \R^m, \, t \in \R} \hspace{1mm} \dfrac{1}{n} \sum_{i = 1}^n \dkh{ t + n \beta \max \hkh{ f_i (\vx) - t, 0 } },
\end{equation}
where $\beta > 1$ is a penalty parameter. 
This model admits an additive finite-sum decomposition across the agents, thereby making the objective amenable to decentralized implementation. 
Based on this structure, they develop a decentralized subgradient method to solve problem~\eqref{opt:penalty}. 
Although the exact penalty reformulation in \eqref{opt:penalty} itself remains valid beyond convexity, their algorithmic analysis fundamentally exploits convex subgradient inequalities and therefore does not directly carry over to nonconvex functions. 
In the nonconvex regime, developing decentralized algorithms for \eqref{opt:penalty} requires explicitly accounting for the nonsmooth geometry induced by the penalties, which becomes a central obstacle in both algorithm design and convergence analysis.

Another line of work circumvents the finite-max structure through a power-sum approximation. 
Hanada et al. \cite{hanada2017simple} pursue this idea for nonnegative and convex functions by considering
\begin{equation}
	\label{opt:qffl}
	\min_{\vx \in \mathbb{R}^m} \hspace{1mm} \frac{1}{q + 1} \sum_{i = 1}^n \fkh{f_i (\vx)}^{q + 1},
\end{equation}
where $q > 0$ is a constant. 
For large $q$, this problem increasingly emphasizes worst-performing agents and can be interpreted as a smooth proxy for the finite-max model~\eqref{opt:fair-max}. 
The same power-weighting principle later appears in the $q$-fair federated learning ($q$FFL) framework \cite{li2020fair}. 
Following the terminology now commonly used in federated learning, we henceforth refer to such power-sum formulations as $q$FFL. 
The objective function of \eqref{opt:qffl} itself remains a smooth finite-sum structure and can therefore be minimized in a fully decentralized manner using standard consensus or gradient tracking techniques. 
However, for any finite $q$, it does not directly solve the exact finite-max model.
In addition, the approximation quality deteriorates with the network size, as we have
\begin{equation*}
	\Phi (\vx) \leq \fkh{ \sum_{i = 1}^n \fkh{f_i (\vx)}^{q + 1} }^{1 / (q + 1)} \leq n^{1 / (q + 1)} \Phi (\vx).
\end{equation*}
Consequently, maintaining a fixed approximation accuracy as $n$ grows generally requires a larger value of $q$. 
As already observed in \cite{hanada2017simple}, increasing $q$ can substantially magnify the scale of local gradients and slow down or destabilize the decentralized iterations. 
They consequently introduce $q$-dependent stepsizes to mitigate this effect. 

The remaining challenge is to solve the nonconvex finite-max problem using local scalar weights and neighbor communication, without a global simplex projection or a nested consensus procedure for every weight update. 
As the regularization vanishes, the smoothness constants grow and interact with consensus errors, tracking errors, and lagged gradient information induced by the alternating updates. 
Controlling these effects while establishing boundedness and stationarity for the original nonsmooth objective is the main analytical task of this paper.

\subsection{Contributions}

\label{subsec:contribution}

This paper presents a systematic investigation of the decentralized finite-max problem~\eqref{opt:fair-max} with smooth but possibly nonconvex local functions. 
Our main contributions are summarized as follows.

We first uncover a decentralized dual representation of the finite-max geometry in problem~\eqref{opt:fair-simplex}. 
Rather than attempting to distribute the simplex variable itself, we introduce a quadratic regularization in the inner maximization and characterize its value function through a dual variable. 
This viewpoint converts the global selection of the worst-performing agents into locally evaluable responses. 

Guided by this representation, we develop an alternating primal-dual gradient tracking (APDGT) algorithm tailored to the finite-max problem~\eqref{opt:fair-max}. 
Each agent maintains a local model, a copy of the dual variable, and an adaptive weight reflecting the relative magnitude of its current loss. 
Two dynamic tracking mechanisms recover, respectively, the simplex constraint and the active-weighted gradient, while consensus steps reconcile the local primal and dual variables. 
The regularization parameter is gradually driven to zero, allowing APDGT to evolve from a smooth weighted surrogate toward the exact nonsmooth maximum. 
Moreover, APDGT automatically identifies the worst-performing agents through its adaptive weights. 
All operations are performed using local information and exchanges with adjacent agents. 
Neither a central coordinator nor a global simplex projection is required. 

We also establish the global convergence of APDGT through a weighted Lyapunov construction, whose coefficients are synchronized with the regularization schedule. 
Our analysis simultaneously controls descent of the averaged primal-dual dynamics, disagreement among agents, and two tracking errors. 
Under standard assumptions, every accumulation point of the generated sequence is shown to be Clarke-stationary for problem~\eqref{opt:fair-max}. 
Moreover, APDGT reaches an $\varepsilon$-stationary point within $O(\varepsilon^{- 3})$ iterations. 
Numerical experiments on synthetic problems and heterogeneous decentralized learning tasks corroborate the theoretical findings and substantiate the practical implication of directly controlling worst-agent performance.

\subsection{Organization}

The remainder of the paper is organized as follows. 
Section~\ref{sec:model} develops the theoretical foundations of the finite-max model, including its regularized dual representation and the associated stationarity measure. 
Section~\ref{sec:algorithm} introduces the proposed APDGT algorithm and its decentralized implementation. 
Section~\ref{sec:convergence} establishes the global convergence and iteration complexity of APDGT. 
Section~\ref{sec:numerics} presents numerical experiments on both synthetic problems and real-world applications to illustrate the fairness benefits of the proposed framework. 
Section~\ref{sec:conclusion} concludes this paper.

\section{Model analysis}

\label{sec:model}

The goal of this paper is to develop a fully decentralized algorithm for solving problem~\eqref{opt:fair-max} by exploiting its equivalent characterization presented in \eqref{opt:fair-simplex}. 
To address the globally coupled constraint in problem~\eqref{opt:fair-simplex}, we add a regularization term in its inner maximization problem and establish the corresponding dual formulation. 
To some extent, this transformation decouples the global constraint imposed by the simplex. 
Moreover, the stationarity violation of the original finite-max problem can also be derived from it.

\subsection{Blanket assumptions}

\label{subsec:assump}

Throughout this paper, all vectors are understood as column vectors unless otherwise specified. 
We use $\jkh{\cdot, \cdot}$ and $\norm{\,\cdot\,}$ for the standard inner product and norm, interpreted as the Euclidean ones for vectors and the Frobenius ones for matrices. 
Before proceeding to the detailed exposition, we first introduce the blanket assumptions.

\begin{assumption}
	\label{asmp:smooth}
	\mbox{}
	
	\begin{enumerate}[(i)]
		
		\item
		Each function $f_i: \R^m \to \R$ is $L$-smooth, namely,
		\begin{equation*}
			\norm{ \nabla f_i (\vx) - \nabla f_i (\vx^\prime)} \leq L \norm{ \vx - \vx^\prime},
		\end{equation*}
		for all $\vx, \vx^\prime \in \R^m$ and $i \in \cN$. 
		
		\item 
		The finite-max objective $\Phi$ is level-bounded, namely, for every $a \in \R$, the sublevel set $\{\vx \in \R^m \mid \Phi (\vx) \leq a\}$ is bounded. 
		
	\end{enumerate}
\end{assumption}

Since $\Phi$ is continuous, Assumption~\ref{asmp:smooth}(ii) implies that $\Phi$ is bounded below and attains its minimum $\Phi^\star := \min_{\vx \in \R^m} \Phi (\vx) > -\infty$. 
For any compact set $\cK \subseteq \R^m$, we define the local gradient bound
\begin{equation}
	\label{eq:local_grad_bound}
	L_{\cK, 0} := \max_{i \in \cN} \sup_{\vx \in \operatorname{conv} (\cK)} \norm{\nabla f_i (\vx)},
\end{equation}
where $\operatorname{conv} (\cK)$ is the convex hull of $\cK$. 
Since each $f_i$ is continuously differentiable, we have $L_{\cK, 0} < \infty$ for any compact set $\cK$.

\subsection{Regularized dual function}

\label{subsec:algo_design}

This subsection constructs a regularized dual function to approximate the finite-max objective $\Phi$ in problem~\eqref{opt:fair-max}, which can be directly evaluated in a decentralized setting. 

Since the inner maximization problem in \eqref{opt:fair-simplex} may admit multiple solutions, we introduce a quadratic regularization term \cite{nesterov2005smooth} into this problem and define
\begin{equation}
	\label{eq:reg_inner_map}
	\Phi_\rho (\vx) := \max_{\vy \in \Delta_n}
	\hkh{
		\sum_{i = 1}^n y_i f_i (\vx) - \frac{\rho}{2} \norm{\vy}^2
	},
\end{equation}
where $\rho > 0$ is a regularization parameter. 
The objective function in \eqref{eq:reg_inner_map} is $\rho$-strongly concave in $\vy$, and hence its maximizer $\vy_\rho^\star (\vx)$ is unique. 
Since $\norm{\vy} \leq 1$ on $\Delta_n$ and each unit vector is feasible, we have
\begin{equation}
	\label{eq:Phi_rho_approx}
	\Phi (\vx) - \frac{\rho}{2}
	\leq \Phi_\rho (\vx)
	\leq \Phi (\vx).
\end{equation}
Thus, the approximation error between $\Phi_\rho$ and $\Phi$ vanishes as $\rho \downarrow 0$.

For each fixed $\vx \in \R^m$, the maximizer of problem~\eqref{eq:reg_inner_map} is determined by the Euclidean projection
\begin{equation}
	\label{eq:proj_form_new}
	\vy_\rho^\star (\vx)
	= \argmin_{\vy \in \Delta_n} \frac{1}{2} \norm{\vy - \frac{1}{\rho} \vf (\vx)}^2,
\end{equation}
where $\vf (\vx) := [f_1 (\vx), \dotsc, f_n (\vx)]\zz \in \R^n$. 
Directly computing the projection in \eqref{eq:proj_form_new} requires global access to all objective values, which is not available under the decentralized setting. 
To address this issue, we derive the dual formulation of \eqref{eq:reg_inner_map}. 
The Lagrangian dual function with respect to the equality constraint $\mathbf{1}_n^\top \vy = 1$ can be written as
\begin{equation}
	\label{eq:dual}
	h_\rho (\nu, \vx)
	= \nu + \max_{\vy \in [0, 1]^n} \hkh{ \sum_{i = 1}^n (f_i (\vx) - \nu) y_i - \frac{\rho}{2} \norm{\vy}^2 },
\end{equation}
where $\nu \in \R$ is the corresponding Lagrangian multiplier. 
Since the regularized inner problem minimizes a concave quadratic function subject to affine constraints and satisfies Slater's condition, strong duality holds, and we have
\begin{equation}
	\label{eq:dual_exact}
	\Phi_\rho (\vx) = \min_{\nu \in \R} h_\rho (\nu, \vx).
\end{equation}
We refer to $h_\rho (\nu, \vx)$ as the regularized dual function, as it involves a regularization term and a dual variable. 
As we shall see, this formulation enables us to decouple the variable $\vy$ subject to the global constraint $\mathbf{1}_n\zz \vy = 1$.

For fixed $\vx \in \R^m$, $\rho > 0$ and $\nu \in \R$, the maximization problem with respect to $\vy$ in \eqref{eq:dual} admits the following closed-form solution,
\begin{equation}
	\label{eq:y_rho_def}
	\vy_\rho (\nu, \vx)
	:= (y_{\rho, 1} (\nu, \vx), \dotsc, y_{\rho, n} (\nu,\vx)), 
	\;\mbox{with}\;
	y_{\rho, i} (\nu, \vx)
	:= \Pi_{[0,1]} \dkh{ \frac{f_i (\vx) - \nu}{\rho} }.
\end{equation}
Moreover, the dual function $h_\rho$ can be expressed as
\begin{equation*}
	h_\rho (\nu, \vx)
	:= \nu + \rho \sum_{i = 1}^n \psi \dkh{ \frac{f_i (\vx) - \nu}{\rho} },
\end{equation*}
where $\psi: \R \to \R$ is a function defined by
\begin{equation*}
	\psi (t)
	:= \max_{0 \leq u \leq 1} \hkh{ t u - \frac{1}{2} u^2 } 
	= \left\{
	\begin{aligned}
		& 0, && \mbox{if } t \leq 0, \\
		& \frac{1}{2} t^2, && \mbox{if } 0 < t < 1, \\
		& t - \frac{1}{2}, && \mbox{if } t \geq 1.
	\end{aligned}
	\right.
\end{equation*}

Let $\nu_\rho^\star (\vx)$ be a global minimizer of the optimization problem in \eqref{eq:dual_exact} for fixed $\vx \in \R^m$ and $\rho > 0$. 
Then we have $\Phi_\rho (\vx) = h_\rho (\nu_\rho^\star (\vx), \vx)$. 
By strong duality, $\vy_\rho^\star (\vx)$ is the global maximizer of the optimization problem in \eqref{eq:dual} with $\nu = \nu_\rho^\star (\vx)$. 
Consequently, it holds that
\begin{equation*}
	\vy_\rho^\star (\vx)
	= \vy_\rho (\nu_\rho^\star (\vx), \vx).
\end{equation*}
We also denote $y_{\rho,i}^\star(\vx) = y_{\rho, i} (\nu_\rho^\star (\vx), \vx)$. 
Since $\psi^\prime (t) = \Pi_{[0,1]} (t)$, we obtain that
\begin{equation*}
	\nabla_\nu h_\rho (\nu,\vx)
	= 1 - \sum_{i = 1}^n \Pi_{[0,1]} \dkh{ \frac{f_i (\vx) - \nu}{\rho} }
	= 1 - \mathbf{1}_n\zz \vy_{\rho} (\nu, \vx).
\end{equation*}
Thus, the simplex feasibility condition is reduced to the one-dimensional equation
\begin{equation*}
	\nabla_\nu h_\rho (\nu, \vx) = 0,
\end{equation*}
which is satisfied at $\nu = \nu_\rho^\star (\vx)$.

The next lemma establishes the local smoothness properties of the regularized dual function and quantifies the corresponding Lipschitz constants, which will be used in the subsequent analysis.

\begin{lemma}[Local smoothness of $h_\rho$]
	\label{lem:h_smooth}
	Suppose that Assumption~\ref{asmp:smooth} holds. 
	Let $\cK\subset\R^m$ be a compact set with $L_{\cK,0}$ defined in \eqref{eq:local_grad_bound}. 
	Then, for any $\rho > 0$, the following statements hold. 
	\begin{enumerate}[(i)]
		
		\item
		For fixed $\vx$, the partial gradient $\nabla_\nu h_\rho (\nu, \vx) = 1 - \mathbf{1}_n\zz \vy_{\rho} (\nu, \vx)$ is $(n / \rho)$-Lipschitz continuous in $\nu$. 
		
		\item
		For fixed $\nu$, the partial gradient $\nabla_{\vx} h_\rho (\nu, \vx) = \sum_{i = 1}^n y_{\rho, i} (\nu, \vx) \nabla f_i (\vx)$ is $(n L + n L_{\cK,0}^2 / \rho)$-Lipschitz continuous in $\vx$ on $\cK$. 
		
		\item
		For fixed $\vx\in\cK$, $\nabla_{\vx} h_\rho (\nu, \vx)$ is $(n L_{\cK,0} / \rho)$-Lipschitz continuous in $\nu$. 
		
		\item
		For fixed $\nu$, $\nabla_\nu h_\rho (\nu, \vx)$ is $(n L_{\cK,0} / \rho)$-Lipschitz continuous in $\vx$ on $\cK$. 
		
	\end{enumerate}
\end{lemma}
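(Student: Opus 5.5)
The whole argument rests on one elementary fact: the clipping map $\Pi_{[0,1]}$ is $1$-Lipschitz, so each weight $y_{\rho,i}(\nu,\vx)=\Pi_{[0,1]}((f_i(\vx)-\nu)/\rho)$ inherits Lipschitz bounds from its argument. The other ingredient is that on $\operatorname{conv}(\cK)$ each $f_i$ is $L_{\cK,0}$-Lipschitz, by the mean value theorem and \eqref{eq:local_grad_bound}. From these I record two bounds, valid for all $\nu,\nu'$ and all $\vx,\vx'\in\cK$:
\begin{equation*}
	\abs{y_{\rho,i}(\nu,\vx)-y_{\rho,i}(\nu',\vx)}\le \frac{\abs{\nu-\nu'}}{\rho},
	\qquad
	\abs{y_{\rho,i}(\nu,\vx)-y_{\rho,i}(\nu,\vx')}\le \frac{L_{\cK,0}}{\rho}\norm{\vx-\vx'}.
\end{equation*}
The gradient formulas stated in the lemma follow from the chain rule, using $\psi'=\Pi_{[0,1]}$ and the fact that $\psi$ is $C^1$.

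Items (i), (iii) and (iv) are then immediate.

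For (i), summing the first bound over $i$ gives $\abs{\nabla_\nu h_\rho(\nu,\vx)-\nabla_\nu h_\rho(\nu',\vx)}\le n\abs{\nu-\nu'}/\rho$.

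For (iii), fix $\vx\in\cK$. Then $\norm{\nabla_\vx h_\rho(\nu,\vx)-\nabla_\vx h_\rho(\nu',\vx)}\le\sum_i\abs{y_{\rho,i}(\nu,\vx)-y_{\rho,i}(\nu',\vx)}\,\norm{\nabla f_i(\vx)}$. Using the first bound together with $\norm{\nabla f_i(\vx)}\le L_{\cK,0}$, this is at most $nL_{\cK,0}\abs{\nu-\nu'}/\rho$.

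For (iv), summing the second bound over $i$ gives the constant $nL_{\cK,0}/\rho$.

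Item (ii) requires a product-rule splitting. For $\vx,\vx'\in\cK$ I write
\begin{equation*}
	y_{\rho,i}(\nu,\vx)\nabla f_i(\vx)-y_{\rho,i}(\nu,\vx')\nabla f_i(\vx')
	= y_{\rho,i}(\nu,\vx)\bigl(\nabla f_i(\vx)-\nabla f_i(\vx')\bigr)
	+\bigl(y_{\rho,i}(\nu,\vx)-y_{\rho,i}(\nu,\vx')\bigr)\nabla f_i(\vx').
\end{equation*}
The first term is bounded by $L\norm{\vx-\vx'}$, since $0\le y_{\rho,i}\le1$ and Assumption~\ref{asmp:smooth}(i) holds. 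The second term is bounded by $(L_{\cK,0}^2/\rho)\norm{\vx-\vx'}$, by the second bound and $\norm{\nabla f_i(\vx')}\le L_{\cK,0}$. Summing over $i$ gives the constant $nL+nL_{\cK,0}^2/\rho$.

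The only point needing care, and the nearest thing to an obstacle, is the second bound. It requires the segment $[\vx,\vx']$ to lie in a region where $\norm{\nabla f_i}\le L_{\cK,0}$. That segment lies in $\operatorname{conv}(\cK)$ but not necessarily in $\cK$, which is exactly why $L_{\cK,0}$ is defined over the convex hull. I also need $\nabla f_i(\vx')$ with $\vx'\in\cK$, and this point lies in the convex hull, so the same bound applies.
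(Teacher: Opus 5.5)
Your proof is correct and follows essentially the same route as the paper. The paper records the single combined estimate $\abs{y_{\rho,i}(\nu,\vx)-y_{\rho,i}(\nu',\vx')}\le\rho^{-1}\bigl(\abs{\nu-\nu'}+L_{\cK,0}\norm{\vx-\vx'}\bigr)$ for $\vx,\vx'\in\cK$, which is your two bounds merged and rests on the same observation that the segment lies in $\operatorname{conv}(\cK)$; it reads off (i), (iii) and (iv) from this estimate and proves (ii) by the same product splitting, so the only difference is that separating the $\nu$- and $\vx$-variations makes it visible that (i) needs no restriction to $\cK$.
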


\begin{proof}
	According to the definition of $y_{\rho, i} (\nu, \vx)$ in \eqref{eq:y_rho_def}, the nonexpansiveness of $\Pi_{[0,1]}$, and the definition of $L_{\cK,0}$, we have
	\begin{equation}
		\label{eq:local_y_lipschitz}
		\abs{ y_{\rho, i} (\nu, \vx) - y_{\rho, i} (\nu^\prime, \vx^\prime) }
		\leq \rho^{-1}\dkh{ \abs{\nu-\nu^\prime} + L_{\cK,0} \norm{\vx - \vx^\prime} },
	\end{equation}
	for all $\vx,\vx^\prime\in\cK$ and all $\nu,\nu^\prime\in\R$. 
	Here, we have used the fact that the line segment between $\vx$ and $\vx^\prime$ is contained in $\operatorname{conv}(\cK)$. 
	Part~(i) follows by setting
	$\vx=\vx^\prime$ in \eqref{eq:local_y_lipschitz} and summing over $i$. 
	For part~(ii), using $0 \leq y_{\rho, i} (\nu, \vx)\leq 1$, $L$-smoothness of $f_i$, and $\norm{\nabla f_i (\vx)} \leq L_{\cK,0}$ for $\vx \in \cK$ gives
	\begin{equation*}
		\begin{aligned}
			& \norm{ y_{\rho, i} (\nu, \vx) \nabla f_i (\vx) - y_{\rho, i} (\nu, \vx^\prime) \nabla f_i (\vx^\prime) } \\
			\leq {} & \norm{\nabla f_i(\vx)-\nabla f_i(\vx^\prime)} + \abs{y_{\rho,i}(\nu,\vx)-y_{\rho,i}(\nu,\vx^\prime)} \norm{\nabla f_i(\vx^\prime)} \\
			\leq {} & \dkh{ L + \frac{L_{\cK,0}^2}{\rho} } \norm{\vx - \vx^\prime}.
		\end{aligned}
	\end{equation*}
	Summing over $i$ proves part~(ii). 
	Parts~(iii)--(iv) follow from the same bound \eqref{eq:local_y_lipschitz}. 
\end{proof}

\subsection{Stationarity measure}

\label{subsec:moreau_stationarity}

We next define the stationarity measure used for problem~\eqref{opt:fair-max}. 
An interesting observation is that the stationarity violation can be controlled by the gradient of the regularized dual function.

For any $\varepsilon \geq 0$ and $\vx \in \R^m$, we define the $\varepsilon$-near-active index set by
\begin{equation*}
	\cI_\varepsilon (\vx)
	:= \hkh{ i \in \cN \mid \Phi (\vx) - f_i (\vx) \leq \varepsilon },
\end{equation*}
and the associated $\varepsilon$-near-active gradient hull by
\begin{equation*}
	\cG_\varepsilon (\vx)
	:= \operatorname{conv} \hkh{ \nabla f_i (\vx) \mid i \in \cI_\varepsilon (\vx) }.
\end{equation*}
When $\varepsilon = 0$, the set $\cI_0 (\vx)$ is precisely the active index set of $\Phi$ at $\vx$. 
Thus, the Clarke subdifferential of $\Phi$, denoted by $\partial \Phi$, can be expressed as
\begin{equation}
	\label{eq:clarke_max_rule}
	\partial \Phi (\vx)
	= \cG_0 (\vx)
	= \operatorname{conv} \hkh{ \nabla f_i (\vx) \;\middle|\; i \in \argmax_{j \in \cN} f_j (\vx) },
\end{equation}
by the standard calculus rule for the Clarke subdifferential of a finite maximum of continuously differentiable functions~\cite{clarke1990optimization,roc1998var}. 
We use the following notions of stationarity.

\begin{definition}
	\label{def:epsilon_stationary_new}
	We say a point $\vx \in \R^m$ is a Clarke-stationary point of problem~\eqref{opt:fair-max} if $0 \in \partial \Phi (\vx)$.
	For any $\varepsilon > 0$, a point $\vx \in \R^m$ is called an $\varepsilon$-stationary point of problem~\eqref{opt:fair-max} if $\dist \dkh{0, \cG_\varepsilon (\vx) } \leq \varepsilon$. 
\end{definition}


After introducing the stationarity notions, we now connect them to quantities that can be evaluated through the regularized dual function.

\begin{lemma}
	\label{lem:bridge}
	Suppose that Assumption~\ref{asmp:smooth} holds. 
	Let $\cK \subseteq \R^m$ be a compact set with $L_{\cK, 0}$ defined in \eqref{eq:local_grad_bound}. 
	Then, for any $\nu \in \R$, $\vx \in \cK$, and $\rho > 0$,
	\begin{equation*}
		\norm{\nabla \Phi_\rho(\vx)}
		\leq
		\norm{\nabla_{\vx} h_\rho(\nu,\vx)}
		+
		L_{\cK,0} \abs{\nabla_\nu h_\rho(\nu,\vx)}.
	\end{equation*}
\end{lemma}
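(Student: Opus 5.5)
Since $\Phi_\rho(\vx)=\min_\nu h_\rho(\nu,\vx)$, where $h_\rho$ is convex and $C^1$ in $\nu$ with $\nabla_\nu h_\rho$ nonincreasing in $\nu$, we first identify the gradient of $\Phi_\rho$. By Danskin's theorem applied to the strongly concave inner problem \eqref{eq:reg_inner_map}, whose maximizer $\vy_\rho^\star(\vx)$ is unique and continuous in $\vx$, $\Phi_\rho$ is differentiable with
\begin{equation*}
	\nabla\Phi_\rho(\vx)=\sum_{i=1}^n y^\star_{\rho,i}(\vx)\nabla f_i(\vx)=\nabla_{\vx}h_\rho(\nu^\star_\rho(\vx),\vx).
\end{equation*}
Here we use $\vy_\rho^\star(\vx)=\vy_\rho(\nu_\rho^\star(\vx),\vx)$ and the formula for $\nabla_{\vx}h_\rho$ from Lemma~\ref{lem:h_smooth}(ii). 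Note this holds for any minimizer $\nu_\rho^\star(\vx)$, because the resulting $\vy$ is the same. The triangle inequality then gives
\begin{equation*}
	\norm{\nabla\Phi_\rho(\vx)}\le\norm{\nabla_{\vx}h_\rho(\nu,\vx)}+\sum_{i=1}^n\abs{y_{\rho,i}(\nu,\vx)-y^\star_{\rho,i}(\vx)}\,\norm{\nabla f_i(\vx)}.
\end{equation*}
With $\norm{\nabla f_i(\vx)}\le L_{\cK,0}$ for $\vx\in\cK$, it remains to show that $\sum_i\abs{y_{\rho,i}(\nu,\vx)-y^\star_{\rho,i}(\vx)}\le\abs{\nabla_\nu h_\rho(\nu,\vx)}$.

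This is the key step, and it relies on monotonicity rather than on Lipschitz bounds. Each map $\nu\mapsto y_{\rho,i}(\nu,\vx)=\Pi_{[0,1]}((f_i(\vx)-\nu)/\rho)$ is nonincreasing. Suppose first that $\nu\ge\nu^\star:=\nu^\star_\rho(\vx)$. Then $y_{\rho,i}(\nu,\vx)\le y^\star_{\rho,i}(\vx)$ for every $i$, so all the differences have the same sign. Hence
\begin{equation*}
	\sum_i\abs{y_{\rho,i}(\nu,\vx)-y^\star_{\rho,i}(\vx)}=\sum_i\bigl(y^\star_{\rho,i}(\vx)-y_{\rho,i}(\nu,\vx)\bigr)=1-\mathbf{1}_n\zz\vy_\rho(\nu,\vx)=\nabla_\nu h_\rho(\nu,\vx),
\end{equation*}
using $\mathbf{1}_n\zz\vy^\star_\rho(\vx)=1$. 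In the case $\nu<\nu^\star$ all differences have the opposite sign, and the same computation gives $-\nabla_\nu h_\rho(\nu,\vx)$. In both cases the sum equals $\abs{\nabla_\nu h_\rho(\nu,\vx)}$ exactly, and combining this with the displayed inequality yields the claim.

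The main obstacle is justifying the identity for $\nabla\Phi_\rho$. I would obtain it from Danskin's theorem on the primal form \eqref{eq:reg_inner_map}, where the maximizer is unique, rather than from the dual form, where the minimizer $\nu^\star$ may be non-unique. Everything else is the sign argument above. A naive approach would use the Lipschitz bound of Lemma~\ref{lem:h_smooth}(iii), but that introduces a factor $n/\rho$, which the monotonicity argument avoids.
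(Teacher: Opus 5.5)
Your proposal is correct and follows the paper's proof essentially step for step. You use Danskin's theorem on the primal form \eqref{eq:reg_inner_map}, then the same-sign argument from the monotonicity of $\nu\mapsto y_{\rho,i}(\nu,\vx)$ to get $\norm{\vy_\rho(\nu,\vx)-\vy_\rho^\star(\vx)}_1=\abs{\nabla_\nu h_\rho(\nu,\vx)}$, and finally the bound $\norm{\nabla f_i(\vx)}\le L_{\cK,0}$ together with the triangle inequality.

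One slip in your opening sentence: $\nabla_\nu h_\rho$ is nondecreasing in $\nu$, as convexity requires, not nonincreasing. This claim is never used in the argument, so it is harmless.
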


\begin{proof}
	According to Danskin's theorem~\cite{danskin1966theory}, we have
	\begin{equation*}
		\nabla\Phi_\rho(\vx)
		=
		\sum_{i=1}^n y_{\rho,i}^\star(\vx) \nabla f_i(\vx).
	\end{equation*}
	Recall that $y_{\rho,i}^\star(\vx) = y_{\rho, i} (\nu_\rho^\star (\vx), \vx)$. 
	Each map $\nu \mapsto y_{\rho,i}(\nu,\vx) = \Pi_{[0,1]} ( (f_i(\vx)-\nu) / \rho)$ is nonincreasing for fixed $\vx \in \R^m$. 
	Hence, the differences $y_{\rho,i}(\nu,\vx) - y_{\rho,i}(\nu_\rho^\star(\vx),\vx)$ have the same sign for all $i \in \cN$, which implies that
	\begin{equation*}
		\begin{aligned}
			\norm{ \vy_\rho(\nu,\vx) - \vy_\rho^\star(\vx) }_1
			&=
			\abs{
				\sum_{i=1}^n
				\bigl(
				y_{\rho,i}(\nu,\vx)-y_{\rho,i}(\nu_\rho^\star(\vx),\vx)
				\bigr)
			} \\
			&=
			\abs{
				1-\sum_{i=1}^n y_{\rho,i}(\nu,\vx)
			}
			=
			\abs{ \nabla_\nu h_\rho(\nu,\vx) }.
		\end{aligned}
	\end{equation*}
	Using $\|\nabla f_i(\vx)\|\le L_{\cK,0}$ for $\vx\in\cK$, we obtain
	\begin{equation*}
		\begin{aligned}
			\norm{ \nabla \Phi_\rho (\vx) - \nabla_{\vx} h_\rho (\nu, \vx) }
			&=
			\norm{
				\sum_{i=1}^n
				\bigl(
				y_{\rho,i}^\star(\vx)-y_{\rho,i}(\nu,\vx)
				\bigr)\nabla f_i(\vx)
			} \\
			&\le
			L_{\cK,0}
			\sum_{i = 1}^n
			\abs{
				y_{\rho,i}^\star(\vx)-y_{\rho,i}(\nu,\vx)
			} \\
			&=
			L_{\cK,0} \norm{ \vy_\rho(\nu,\vx) - \vy_\rho^\star(\vx)}_1\\
			&=
			L_{\cK,0} \abs{\nabla_\nu h_\rho(\nu,\vx)}.
		\end{aligned}
	\end{equation*}
	The asserted result then follows from the triangle inequality.
\end{proof}

The next lemma shows that the gradient of $\Phi_\rho$ belongs to the convex hull of $\rho$-near-active gradients, which yields a finite-accuracy stationarity certificate for the original nonsmooth objective function.

\begin{lemma}
	\label{lem:smoothed_gradient_in_hull}
	Suppose that Assumption~\ref{asmp:smooth} holds. 
	Let $\cK\subset\R^m$ be compact and let $L_{\cK,0}$ be defined by
	\eqref{eq:local_grad_bound}. Then, for any $\rho>0$ and $\vx\in\cK$, the support of $\vy_\rho^\star(\vx)$ is contained in the $\rho$-near-active index set, namely,
	\begin{equation*}
		\cJ_\rho (\vx)
		:=
		\hkh{ i \in \cN \mid y_{\rho,i}^\star(\vx)>0 }
		\subseteq
		\cI_\rho(\vx).
	\end{equation*}
	Consequently, we have
	\begin{equation*}
		\nabla \Phi_\rho(\vx) \in \cG_\rho(\vx).
	\end{equation*}
	Moreover, for any $\nu\in\mathbb{R}$, it holds that
	\begin{equation}
		\label{eq:dist_bound_rho_hull}
		\dist \dkh{ 0, \cG_\rho(\vx) }
		\leq
		\norm{ \nabla_{\vx} h_\rho(\nu,\vx) }
		+
		L_{\cK,0} \abs{\nabla_\nu h_\rho(\nu,\vx)}.
	\end{equation}
\end{lemma}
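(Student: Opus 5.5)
The plan has three steps: locate the support of $\vy_\rho^\star(\vx)$ via the threshold $\nu_\rho^\star(\vx)$, read off hull membership from Danskin's formula, and combine with Lemma~\ref{lem:bridge}.

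\textbf{Step 1: the support lies in $\cI_\rho(\vx)$.} Write $\nu^\star := \nu_\rho^\star(\vx)$ and use the closed form $y_{\rho,i}^\star(\vx) = \Pi_{[0,1]}((f_i(\vx)-\nu^\star)/\rho)$. An index $i$ belongs to $\cJ_\rho(\vx)$ exactly when $f_i(\vx) > \nu^\star$. It therefore suffices to show $\nu^\star \geq \Phi(\vx) - \rho$, since then $\Phi(\vx) - f_i(\vx) < \Phi(\vx) - \nu^\star \leq \rho$, i.e., $i\in\cI_\rho(\vx)$.

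To prove $\nu^\star \geq \Phi(\vx)-\rho$, I argue by contradiction. Suppose $\nu^\star < \Phi(\vx) - \rho$, and let $i^\ast$ be an index with $f_{i^\ast}(\vx) = \Phi(\vx)$. Then $(f_{i^\ast}(\vx)-\nu^\star)/\rho > 1$, so $y_{\rho,i^\ast}^\star(\vx) = 1$. Every other index $j$ in the support also has $f_j(\vx) > \nu^\star$, hence $y_{\rho,j}^\star(\vx) > 0$. But $\mathbf{1}_n^\top \vy_\rho^\star(\vx) = 1$ because $\vy_\rho^\star(\vx)\in\Delta_n$. So only $i^\ast$ can be positive, and $\vy_\rho^\star(\vx)$ is the unit vector $e_{i^\ast}$, which is not a contradiction yet.

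The cleaner route avoids the case analysis by using simplex feasibility directly. Since $\sum_i y_{\rho,i}^\star = 1$ and $y_{\rho,i^\ast}^\star = \Pi_{[0,1]}((\Phi(\vx)-\nu^\star)/\rho)$, any $j$ with $y_{\rho,j}^\star > 0$ must have $f_j(\vx) > \nu^\star$. If $\nu^\star < \Phi(\vx)-\rho$, then $y_{\rho,i^\ast}^\star = 1$ forces all other weights to vanish, i.e., $f_j(\vx)\le\nu^\star$ for every $j\ne i^\ast$ that is not a maximizer. Any additional maximizer would also receive weight $1$, violating the sum. So in this case the support is $\{i^\ast\}\subseteq\cI_0(\vx)\subseteq\cI_\rho(\vx)$. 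In the complementary case $\nu^\star\ge\Phi(\vx)-\rho$, the argument of the first paragraph applies. In either case $\cJ_\rho(\vx)\subseteq\cI_\rho(\vx)$. This small case split is the only delicate point; everything else is bookkeeping.

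\textbf{Step 2: hull membership.} By Danskin's theorem (as in the proof of Lemma~\ref{lem:bridge}),
\begin{equation*}
\nabla\Phi_\rho(\vx)=\sum_{i\in\cJ_\rho(\vx)} y_{\rho,i}^\star(\vx)\nabla f_i(\vx).
\end{equation*}
The weights are nonnegative, sum to one, and are supported in $\cI_\rho(\vx)$. Hence $\nabla\Phi_\rho(\vx)$ is a convex combination of the gradients $\nabla f_i(\vx)$ with $i \in \cI_\rho(\vx)$, that is, $\nabla\Phi_\rho(\vx)\in\cG_\rho(\vx)$.

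\textbf{Step 3: the distance bound.} From Step 2, $\dist(0,\cG_\rho(\vx))\le\norm{\nabla\Phi_\rho(\vx)}$. Lemma~\ref{lem:bridge}, valid for any $\nu\in\R$ and $\vx\in\cK$, then bounds the right-hand side by $\norm{\nabla_{\vx}h_\rho(\nu,\vx)}+L_{\cK,0}\abs{\nabla_\nu h_\rho(\nu,\vx)}$. This gives \eqref{eq:dist_bound_rho_hull}.
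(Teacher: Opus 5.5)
Your proof is correct and is essentially the paper's: Steps 2 and 3 match it. Your Step 1 case split, on whether the top weight $y_{\rho,i^\ast}^\star(\vx)$ saturates at $1$, only reorganizes the paper's comparison of a support index $i$ with a maximizing index $k$. There both weights lie in $(0,1)$, the clipping is inactive, and $\Phi(\vx) - f_i(\vx) = \rho\,(y_{\rho,k}^\star(\vx) - y_{\rho,i}^\star(\vx)) \leq \rho$.

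When you write it up, delete the opening claim that it suffices to prove $\nu^\star \geq \Phi(\vx) - \rho$. The minimizer $\nu_\rho^\star(\vx)$ need not be unique and can lie below $\Phi(\vx) - \rho$ when $\vy_\rho^\star(\vx)$ is a unit vector, so only the two-case argument you end up with is a complete proof.
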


\begin{proof}
	Fix $k\in\argmax_{j \in \cN} f_j(\vx)$. We first note that
	$y_{\rho,k}^\star(\vx)>0$. Otherwise, the clipped representation in \eqref{eq:y_rho_def} gives
	$f_k(\vx)\leq\nu_\rho^\star(\vx)$. Since $f_i(\vx)\leq f_k(\vx)$ for all $i \in \cN$, this
	would imply $y_{\rho,i}^\star(\vx)=0$ for every $i$, contradicting
	$\vy_\rho^\star(\vx)\in\Delta_n$.

	Now we take any $i\in\cJ_\rho(\vx)$. 
	If $i=k$, then $\Phi(\vx)-f_i(\vx)=0$. 
	If $i\ne k$, then $y_{\rho,i}^\star(\vx)>0$ and $y_{\rho,k}^\star(\vx)>0$. 
	Since $\vy_\rho^\star(\vx)\in\Delta_n$, both coordinates lie in $(0, 1)$. 
	Hence the clipping is inactive on these two coordinates, and
	\begin{equation*}
		\Phi(\vx)-f_i(\vx)
		=
		\rho\bigl(
		y_{\rho,k}^\star(\vx)-y_{\rho,i}^\star(\vx)
		\bigr)
		\le \rho.
	\end{equation*}
	Thus, we have $\cJ_\rho(\vx)\subseteq\cI_\rho(\vx)$. 
	It follows from Danskin's theorem~\cite{danskin1966theory} that
	\begin{equation*}
		\nabla \Phi_\rho (\vx)
		= \sum_{i = 1}^n y_{\rho, i}^\star (\vx) \nabla f_i (\vx)
		= \sum_{i \in \cJ_\rho (\vx)} y_{\rho, i}^\star (\vx) \nabla f_i (\vx).
	\end{equation*} 
	Since $\vy_\rho^\star(\vx)\in\Delta_n$ and
	$\cJ_\rho(\vx)\subseteq\cI_\rho(\vx)$, this is a convex combination of
	gradients indexed by $\cI_\rho(\vx)$, implying $\nabla\Phi_\rho(\vx)\in\cG_\rho(\vx)$.
	Therefore, we conclude that
	\begin{equation*}
		\dist \dkh{ 0, \cG_\rho(\vx) }
		\leq
		\norm{\nabla\Phi_\rho(\vx)},
	\end{equation*}
	and \eqref{eq:dist_bound_rho_hull} follows from Lemma~\ref{lem:bridge} applied to $\cK$.
\end{proof}

\section{Algorithm design}

\label{sec:algorithm}

Building upon the regularized dual function, we devise an efficient decentralized algorithm for solving problem~\eqref{opt:fair-simplex}. 
Specifically, each agent maintains the component of the simplex variable assigned to it by updating its local dual variable. 
The simplex violation is tracked by the averaged dual residual, whose convergence is established jointly with consensus and tracking-error control.

\subsection{Network setting}

We consider a network $\cS = (\cN, \cV)$ with a mixing matrix $\vW = [w_{ij}] \in \R^{n \times n}$, where $\cV$ is the set of communication links. 
The neighborhood of agent $i$ is denoted by $\cN_i := \{ j \in \cN \mid w_{ij} > 0\}$.

\begin{assumption}
	\label{asmp:graph}
	The network $\cS$ is undirected and connected. 
	The matrix $\vW$ is nonnegative, symmetric, and doubly stochastic $(\vW \mathbf{1}_n = \vW\zz \mathbf{1}_n = \mathbf{1}_n)$.
	Moreover, $w_{ii}>0$ for every $i \in \cN$, and for $i \neq j$, $w_{ij} > 0$ if and only if $(i, j) \in \cV$.
\end{assumption}

The above assumption about the network and the mixing matrix is standard in the literature \cite{nedic2018network}, under which $\vW$ is primitive, doubly stochastic, and conforms to the underlying network topology. 
Let $\vJ := \mathbf{1}_n \mathbf{1}_n\zz / n$. 
Then invoking the results of \cite[Theorem~1]{xiao2004fast}, we have
\begin{equation*}
	\rho_W
	:= \lambda_{\max} \dkh{ \vW - \vJ}
	< 1,
\end{equation*}
where $\lambda_{\max} (\cdot)$ denotes the largest singular value of a given matrix. 
Moreover, for any $\boldsymbol{\Omega} \in \R^{n \times p}$ with $\bar{\boldsymbol{\omega}} := \mathbf{1}_n\zz \boldsymbol{\Omega} / n \in \R^{1 \times p}$, it holds that
\begin{equation}
	\label{eq:mixing_contraction}
	\norm{ \vW \boldsymbol{\Omega} - \mathbf{1}_n \bar{\boldsymbol{\omega}} }
	= \norm{ (\vW - \vJ) (\boldsymbol{\Omega} - \mathbf{1}_n \bar{\boldsymbol{\omega}}) }
	\leq \rho_W \norm{ \boldsymbol{\Omega} - \mathbf{1}_n \bar{\boldsymbol{\omega}} }.
\end{equation}
This result will be used throughout the consensus-error analysis.

\subsection{Algorithmic framework}

We now derive the decentralized updates from the regularized dual function. 
Motivated by the insights from \eqref{eq:Phi_rho_approx} and \eqref{eq:dual_exact}, problem~\eqref{opt:fair-max} can be approximated by
\begin{equation}
	\label{opt:regularized_dual}
	\min_{\vx \in \R^m, \, \nu \in \R} h_\rho (\nu, \vx).
\end{equation}
This is a smooth minimization problem, which is relatively easy to solve and does not involve any globally coupled constraints. 
As $\rho$ approaches zero, the minimized value $\min_{\nu \in \R} h_\rho (\nu, \vx)$ converges uniformly to $\Phi (\vx)$. 

The structure of problem~\eqref{opt:regularized_dual} is also particularly amenable to decentralized implementation. 
Specifically, the regularized dual function $h_\rho$ admits the following finite-sum decomposition,
\begin{equation}
	\label{eq:h_rho_decomp_new}
	h_\rho (\nu, \vx) = \frac{1}{n}\sum_{i = 1}^n h_{\rho, i} (\nu, \vx),
	\quad\mbox{with}\quad
	h_{\rho, i} (\nu, \vx) := \nu + \rho n \psi \dkh{ \frac{f_i(\vx)-\nu}{\rho} }.
\end{equation}
For each agent $i$, we have
\begin{equation*}
	\nabla_\nu h_{\rho, i} (\nu, \vx) = 1 - n y_{\rho, i} (\nu, \vx),
	\quad\mbox{and}\quad
	\nabla_{\vx} h_{\rho, i} (\nu, \vx) = n y_{\rho, i} (\nu, \vx) \nabla f_i (\vx).
\end{equation*}
Equivalently, the averaged local contributions
$1/n-y_{\rho,i}(\nu,\vx)$ and
$y_{\rho,i}(\nu,\vx)\nabla f_i(\vx)$ sum to the two gradients of
$h_\rho$:
\begin{equation*}
	\nabla_\nu h_\rho(\nu,\vx)=\sum_{i=1}^n\left(\frac1n-y_{\rho,i}(\nu,\vx)\right),
	\qquad
	\nabla_\vx h_\rho(\nu,\vx)=\sum_{i=1}^n y_{\rho,i}(\nu,\vx)\nabla f_i(\vx).
\end{equation*}
Thus, the scalar residual enforces the simplex feasibility condition, while the weighted primal gradient drives the descent in the model variable.

The complete procedure of our algorithm is summarized in Algorithm~\ref{alg:defair_pdc}, which is named {\it Alternating Primal-Dual Gradient Tracking} and abbreviated as APDGT. 
The proposed algorithm leverages gradient tracking to solve problem~\eqref{opt:regularized_dual} and integrates an iterative mechanism for gradually diminishing the regularization parameter. 
Based on dynamic average consensus \cite{zhu2010discrete}, APDGT introduces two tracking variables. The scalar $s_{i,k}$ tracks the averaged dual residual contribution $1/n-y_{i,k}$, and $\vd_{i,k}$ tracks the averaged weighted-gradient contribution $y_{i,k}\nabla f_i(\vx_{i,k-1})$. 
Each agent then updates $\nu_{i,k}$ and $\vx_{i,k}$ along these tracked directions, while updating its clipped simplex weight locally. 
Compared with gradient tracking methods designed for the finite-sum model~\eqref{opt:average}, APDGT incurs only the additional communication of two scalar variables $\nu$ and $s$. 
Therefore, the resulting increase in communication burden is negligible.

\begin{algorithm2e}[t]
	\caption{Alternating Primal-Dual Gradient Tracking (APDGT).}
	\label{alg:defair_pdc}
	
	\KwIn{Initial points $\vx_{\mathrm{init}}$ and $\nu_{\mathrm{init}}$, mixing matrix $\vW$, stepsizes $\{\eta_{\nu, k}\}_{k \geq 0}$ and $\{\eta_{x, k}\}_{k \geq 0}$, and regularization schedule $\{\rho_k\}_{k \geq 0}$.}
	
	\For{agents $i \in \cN$ in parallel}{
		
		Set $\vx_{i, 0} = \vx_{\mathrm{init}}$ and $\nu_{i, 0} = \nu_{\mathrm{init}}$.
		
		Set $y_{i, 0} = \Pi_{[0, 1]} ((f_i (\vx_{i, 0}) - \nu_{i, 0}) / \rho_0)$ and $s_{i, 0} = 1 / n - y_{i, 0}$.
		
		Set $\vx_{i, -1} = \vx_{i, 0}$ and $\vd_{i, 0} = y_{i, 0} \nabla f_i (\vx_{i, 0})$. 
		
	}
	
	\For{$k = 0, 1, \dotsc, K - 1$}{
		
		\For{agents $i \in \cN$ in parallel}{
			
			Communicate with neighbors and update the dual variable by
			\begin{equation*}
				\nu_{i, k + 1} = \sum_{j \in \cN_i} w_{i j} \nu_{j, k} - \eta_{\nu, k} s_{i, k}.
			\end{equation*}
			
			Update the simplex weight by $y_{i, k + 1} = \Pi_{[0,1]} ( (f_i (\vx_{i, k}) - \nu_{i, k + 1}) / \rho_k)$. 
			
			Communicate with neighbors and update the tracking variables by
			\begin{equation*}
				s_{i, k + 1} = \sum_{j \in \cN_i} w_{i j} s_{j, k} + (y_{i, k} - y_{i, k + 1}),
			\end{equation*}
			\begin{equation*}
				\vd_{i, k + 1} = \sum_{j \in \cN_i} w_{i j} \vd_{j, k} + y_{i, k + 1} \nabla f_i (\vx_{i, k}) - y_{i, k} \nabla f_i (\vx_{i, k - 1}).
			\end{equation*}
			
			Communicate with neighbors and update the primal variable by
			\begin{equation*}
				\vx_{i, k + 1} = \sum_{j \in \cN_i} w_{i j} \vx_{j, k} - \eta_{x, k} \vd_{i, k + 1}.
			\end{equation*}
			
		}
		
	}
	
	\KwOut{$\{(\vx_{i, K}, y_{i, K})\}_{i \in \cN}$.}
\end{algorithm2e}

\subsection{Compact form}

\label{subsec:avg_error_metrics}

We proceed to rewrite APDGT in a compact form that isolates the averaged dynamics from the perturbations caused by decentralized communication. 

At each iteration $k \geq 0$, we define
\begin{equation*}
	\begin{aligned}
		\vX_k & := [\vx_{1, k}, \, \dotsc, \, \vx_{n, k}]\zz,
		& \vD_k & := [\vd_{1, k}, \, \dotsc, \, \vd_{n, k}]\zz, \\
		\boldsymbol{\nu}_k & := [\nu_{1, k}, \, \dotsc, \, \nu_{n, k}]\zz,
		& \vs_k & := [s_{1, k}, \, \dotsc, \, s_{n, k}]\zz, \\
		\vy_k & := [y_{1, k}, \, \dotsc, \, y_{n, k}]\zz, 
		& \vG_k & := [y_{1, k} \nabla f_1 (\vx_{1, k - 1}), \, \dotsc, \, y_{n, k} \nabla f_n (\vx_{n, k - 1})]\zz.
	\end{aligned}
\end{equation*}
It is clear that $\vG_0=\vD_0$. 
Then the updates of APDGT can be expressed in the following compact form,
\begin{subequations}
	\label{eq:compact_updates}
	\begin{align}
		\label{eq:compact_nu}
		\boldsymbol{\nu}_{k + 1} & = \vW \boldsymbol{\nu}_k - \eta_{\nu,k} \vs_k, \\
		\label{eq:compact_y}
		\vy_{k+1} & = \Pi_{[0,1]^n} \dkh{ \dkh{ [f_1 (\vx_{1, k}), \dotsc, f_n (\vx_{n, k})]\zz - \boldsymbol{\nu}_{k + 1} } / \rho_k }, \\
		\label{eq:compact_s}
		\vs_{k + 1} & = \vW \vs_k + (\vy_k - \vy_{k + 1}), \\
		\label{eq:compact_d}
		\vD_{k + 1} & = \vW \vD_k + (\vG_{k + 1} - \vG_k), \\
		\label{eq:compact_x}
		\vX_{k + 1} & = \vW \vX_k - \eta_{x, k} \vD_{k + 1}.
	\end{align}
\end{subequations}

We denote the corresponding network averages by
\begin{equation*}
	\bar{\vx}_k := \dfrac{1}{n} \vX_k\zz \mathbf{1}_n, 
	\quad
	\bar{\vd}_k := \dfrac{1}{n} \vD_k\zz \mathbf{1}_n,
	\quad
	\bar{\nu}_k := \dfrac{1}{n} \mathbf{1}_n\zz \boldsymbol{\nu}_k, 
	\quad
	\bar{s}_k := \dfrac{1}{n} \mathbf{1}_n\zz \vs_k.
\end{equation*}
The deviations from these averages are measured by
\begin{equation}
	\label{eq:error_matrix_def}
	\begin{aligned}
		\vE_{x, k} & := \vX_k - \mathbf{1}_n \bar{\vx}_k\zz,
		\quad
		& \vE_{\nu, k} & := \boldsymbol{\nu}_k - \mathbf{1}_n \bar{\nu}_k, \\
		\vE_{d, k} & := \vD_k - \mathbf{1}_n \bar{\vd}_k\zz,
		\quad
		& \vE_{s, k} & := \vs_k - \mathbf{1}_n \bar{s}_k.
	\end{aligned}
\end{equation}
Here, $\vE_{x, k}$ and $\vE_{\nu, k}$ quantify primal and dual disagreement across the network, while $\vE_{d, k}$ and $\vE_{s, k}$ quantify the tracking errors for the primal direction and the dual residual.

Under Assumption~\ref{asmp:graph}, the mixing matrix $\vW$ is doubly stochastic. 
Averaging \eqref{eq:compact_nu} and \eqref{eq:compact_x} therefore gives
\begin{equation}
	\label{eq:average_updates}
	\bar{\nu}_{k + 1} = \bar{\nu}_k - \eta_{\nu, k} \bar{s}_k,
	\quad
	\bar{\vx}_{k + 1} = \bar{\vx}_k - \eta_{x, k} \bar{\vd}_{k + 1}.
\end{equation}
Moreover, the initialization $s_{i, 0} = 1 / n - y_{i, 0}$ and the telescoping structure of the tracking recursions imply that
\begin{equation}
	\label{eq:avg_s_identity}
	n \bar{s}_k = 1 - \mathbf{1}_n\zz \vy_k, 
	\quad
	n \bar{\vd}_{k + 1} = \vG_{k + 1}\zz \mathbf{1}_n = \sum_{i = 1}^n y_{i, k + 1} \nabla f_i (\vx_{i, k}),
\end{equation}
for all $k \geq 0$. 
Consequently, $\bar{s}_k$ is precisely the averaged violation of the simplex constraint, whereas $\bar{\vd}_{k + 1}$ is the averaged weighted-gradient direction.

In the absence of network disagreement and tracking error, the averaged dynamics \eqref{eq:average_updates} coincides with an alternating gradient step on the regularized dual function $h_{\rho_k}$. 
The convergence analysis in the next section quantifies how the errors in \eqref{eq:error_matrix_def} perturb this ideal centralized dynamics.

\section{Convergence analysis}

\label{sec:convergence}

In this section, we rigorously demonstrate the global convergence of APDGT and establish its iteration complexity. 
Our analysis separates consensus and tracking errors, constructs inexact descent for the regularized dual function, and closes the estimates through a weighted Lyapunov argument.

Let $\rho_0 > 0$ be an arbitrary constant. 
Algorithm~\ref{alg:defair_pdc} is equipped with the regularization and stepsize schedules
\begin{equation}
	\label{eq:gc_rho_schedule}
	\rho_k
	=
	\rho_0\left(\frac{\tau}{k+\tau}\right)^{1/3},
	\qquad
	\eta_{\nu,k}=\eta_{x,k}=\alpha\rho_k,
\end{equation}
where $\alpha > 0$ and $\tau \geq 1$ will be chosen later.  
We next fix the compact region on which all local Lipschitz estimates are evaluated. 
Let
\begin{equation*}
	\varsigma := h_{\rho_0} (\bar{\nu}_0, \bar{\vx}_0) + (n + 1) \rho_0 / 2 + 2.
\end{equation*}
We define
\begin{equation}
	\label{eq:localization_sets_fixed}
	\begin{aligned}
		\cQ & := \hkh{ \vx \in \R^m \mid \Phi (\vx) \leq \varsigma }, \\
		\cU & := \operatorname{conv} \hkh{ \vx \in \R^m \mid \dist (\vx, \cQ) \leq 1 }, \\
		\cK & := \hkh{ \vx \in \R^m \mid \dist (\vx, \cU) \leq 1 }.
	\end{aligned}
\end{equation}
By Assumption~\ref{asmp:smooth}(ii), $\cQ$ is compact. 
Hence, both $\cU$ and $\cK$ are compact and convex. 
From now on, $\cK$ is fixed as in \eqref{eq:localization_sets_fixed}, and we write
\begin{equation*}
	\bar{L}_0 := L_{\cK, 0}.
\end{equation*}
If $\bar{L}_0 = 0$, reciprocal bounds involving $\bar{L}_0$ are understood as $+ \infty$. 
For an integer $q \geq 0$, we define the localization event
\begin{equation*}
	\mathscr{H}_{\cK} (q)
	:=
	\hkh{
		\bar{\vx}_k \in \cK, \; \vx_{i, k} \in \cK,
		\;\mbox{for all } i \in \cN \mbox{ and } 0 \leq k \leq q
	}.
\end{equation*}
In the subsequent analysis, all estimates before Subsection~\ref{subsec:localization} are conditional on the appropriate event $\mathscr{H}_{\cK} (q)$. 
Subsection~\ref{subsec:localization} then verifies that these events hold for the whole generated sequence. 
Throughout this section, we adopt the conventions $\rho_{- 1} := \rho_0$, $\eta_{x, - 1} := \eta_{x, 0}$, $\vX_{- 1} := \vX_0$, and $\vE_{x, - 1} := \vE_{x, 0}$.

\subsection{Network errors and gradient mismatch}

\label{subsec:error_analysis}

In this subsection, we record several preparatory results needed for the descent analysis. 

The following lemma first presents the contraction bounds for the network errors defined in \eqref{eq:error_matrix_def}.

\begin{lemma}
	\label{lem:error_contraction}
	Suppose that Assumption~\ref{asmp:graph} holds. 
	Then, for every $k \geq 0$, we have
	\begin{subequations}
		\begin{align}
			\|\vE_{\nu,k+1}\|
			&\le \rho_W\|\vE_{\nu,k}\|+\eta_{\nu,k}\|\vE_{s,k}\|,
			\label{eq:err_nu_bound}\\
			\|\vE_{s,k+1}\|
			&\le \rho_W\|\vE_{s,k}\|+\|\vy_{k+1}-\vy_k\|,
			\label{eq:err_s_bound}\\
			\|\vE_{x,k+1}\|
			&\le \rho_W\|\vE_{x,k}\|+\eta_{x,k}\|\vE_{d,k+1}\|,
			\label{eq:err_x_bound}\\
			\|\vE_{d,k+1}\|
			&\le \rho_W\|\vE_{d,k}\|+\|\vG_{k+1}-\vG_k\|.
			\label{eq:err_d_bound}
		\end{align}
	\end{subequations}
\end{lemma}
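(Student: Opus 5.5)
All four bounds follow the same template: subtract the averaged recursion from the compact update \eqref{eq:compact_updates}, express the result with the projector $\vI - \vJ$, and then apply the mixing contraction \eqref{eq:mixing_contraction} together with the triangle inequality. Since $\vW$ is doubly stochastic, $\vJ\vW = \vJ = \vW\vJ$ and $\vE_{\cdot,k} = (\vI-\vJ)(\cdot)_k$. Also, $\norm{\vI - \vJ} \le 1$ as an operator, because $\vI-\vJ$ is an orthogonal projector.

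For \eqref{eq:err_nu_bound}, apply $(\vI-\vJ)$ to \eqref{eq:compact_nu}. This gives
\begin{equation*}
\vE_{\nu,k+1} = (\vI-\vJ)\vW\boldsymbol{\nu}_k - \eta_{\nu,k}(\vI-\vJ)\vs_k = (\vW-\vJ)\vE_{\nu,k} - \eta_{\nu,k}\vE_{s,k}.
\end{equation*}
Here we used $(\vI-\vJ)\vW = \vW - \vJ = (\vW-\vJ)(\vI-\vJ)$. Taking norms, \eqref{eq:mixing_contraction} bounds the first term by $\rho_W\norm{\vE_{\nu,k}}$, and the second term is exactly $\eta_{\nu,k}\norm{\vE_{s,k}}$. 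The bound \eqref{eq:err_x_bound} follows identically from \eqref{eq:compact_x}. The only difference is that the perturbation is $\vD_{k+1}$, whose consensus part is $\vE_{d,k+1}$; the Frobenius norm handles the matrix case in the same way.

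For the tracking recursions \eqref{eq:err_s_bound} and \eqref{eq:err_d_bound}, apply $(\vI-\vJ)$ to \eqref{eq:compact_s} and \eqref{eq:compact_d}. For the dual residual this gives
\begin{equation*}
\vE_{s,k+1} = (\vW-\vJ)\vE_{s,k} + (\vI-\vJ)(\vy_k-\vy_{k+1}).
\end{equation*}
Bound the last term by $\norm{\vy_{k+1}-\vy_k}$ using nonexpansiveness of the projector $\vI-\vJ$. The primal tracking error is handled the same way with $\vG_{k+1}-\vG_k$ in place of $\vy_k-\vy_{k+1}$.

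The only point needing care is the commutation identity $(\vI-\vJ)\vW = (\vW-\vJ)(\vI-\vJ)$, which uses $\vW\vJ = \vJ\vW = \vJ$ and $\vJ^2 = \vJ$. It is precisely the equality already recorded in \eqref{eq:mixing_contraction}. No step is a real obstacle, since these estimates hold for all $k\ge 0$ without the localization event.
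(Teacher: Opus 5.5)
Your proposal is correct and follows the paper's proof: the paper also subtracts the averaged recursion to obtain $\vE_{\nu,k+1} = \vW\vE_{\nu,k} - \eta_{\nu,k}\vE_{s,k}$, which equals your $(\vW-\vJ)\vE_{\nu,k} - \eta_{\nu,k}\vE_{s,k}$ because $\vJ\vE_{\nu,k}=0$, and then applies \eqref{eq:mixing_contraction}. For the tracking perturbations it uses $\|\vu - \vJ\vu\| \le \|\vu\|$, which is your nonexpansiveness of $\vI-\vJ$, and the $\vE_x$ and $\vE_d$ bounds are handled analogously in both arguments.
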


\begin{proof}
	From \eqref{eq:compact_nu} and \eqref{eq:average_updates},
	\begin{equation*}
		\vE_{\nu,k+1}
		=
		\boldsymbol{\nu}_{k+1}-\mathbf{1}_n\bar{\nu}_{k+1}
		=
		\vW\vE_{\nu,k}-\eta_{\nu,k}\vE_{s,k}.
	\end{equation*}
	Taking norms and using the inequality~\eqref{eq:mixing_contraction} gives
	\eqref{eq:err_nu_bound}.
	
	Averaging \eqref{eq:compact_s} gives
	$\bar{s}_{k + 1} = \bar{s}_k + \mathbf{1}_n\zz (\vy_k - \vy_{k + 1}) / n$, and hence
	\begin{equation*}
		\vE_{s,k+1}
		=
		\vW \vE_{s,k}
		+
		\fkh{
			(\vy_k-\vy_{k+1})
			-
			\frac{1}{n} \mathbf{1}_n \mathbf{1}_n^\top (\vy_k - \vy_{k + 1})
		}.
	\end{equation*}
	Thus \eqref{eq:err_s_bound} follows from \eqref{eq:mixing_contraction} and $\|\vu - \mathbf{1}_n \mathbf{1}_n\zz \vu / n\| \leq \|\vu\|$ for $\vu \in \R^n$. 
	The estimates \eqref{eq:err_x_bound} and \eqref{eq:err_d_bound} follow analogously from \eqref{eq:compact_x} and \eqref{eq:compact_d}.
\end{proof}

The next estimates establish the discrepancy between the averaged directions in APDGT and the gradients of the regularized dual function.

\begin{lemma}
	\label{lem:mismatch}
	Suppose that Assumption~\ref{asmp:smooth} holds. 
	Then, for every $k \geq 0$ such that $\mathscr{H}_{\cK} (k)$ holds, we have
	\begin{equation}
		\label{eq:mismatch_dual_new}
		\abs{\nabla_\nu h_{\rho_k} (\bar{\nu}_{k + 1}, \bar{\vx}_k) - n \bar{s}_{k + 1}}
		\leq
		\frac{\sqrt{n}}{\rho_k} \norm{\vE_{\nu, k + 1}}
		+
		\frac{\bar{L}_0 \sqrt{n}}{\rho_k} \norm{\vE_{x, k}},
	\end{equation}
	and
	\begin{equation}
		\label{eq:mismatch_primal_new}
		\norm{\nabla_{\vx} h_{\rho_k} (\bar{\nu}_{k + 1}, \bar{\vx}_k) - n \bar{\vd}_{k + 1}}
		\leq
		\frac{\bar{L}_0 \sqrt{n}}{\rho_k} \norm{\vE_{\nu, k + 1}}
		+
		\sqrt{n} \dkh{ L + \frac{\bar{L}_0^2}{\rho_k} } \norm{\vE_{x,k}}.
	\end{equation}
\end{lemma}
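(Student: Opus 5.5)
The plan is to write both averaged quantities as sums of local terms evaluated at the local iterates, and compare them term by term with the same sums evaluated at the network averages. By \eqref{eq:avg_s_identity}, we have $n\bar{s}_{k+1} = 1 - \sum_i y_{i,k+1}$. Here $y_{i,k+1} = \Pi_{[0,1]}((f_i(\vx_{i,k}) - \nu_{i,k+1})/\rho_k)$, so $y_{i,k+1} = y_{\rho_k,i}(\nu_{i,k+1},\vx_{i,k})$. Likewise, $n\bar{\vd}_{k+1} = \sum_i y_{\rho_k,i}(\nu_{i,k+1},\vx_{i,k})\nabla f_i(\vx_{i,k})$. On the other side, $\nabla_\nu h_{\rho_k}(\bar\nu_{k+1},\bar\vx_k) = 1 - \sum_i y_{\rho_k,i}(\bar\nu_{k+1},\bar\vx_k)$ and $\nabla_\vx h_{\rho_k}(\bar\nu_{k+1},\bar\vx_k) = \sum_i y_{\rho_k,i}(\bar\nu_{k+1},\bar\vx_k)\nabla f_i(\bar\vx_k)$.

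For \eqref{eq:mismatch_dual_new}, I would take the difference, apply the triangle inequality, and invoke the local Lipschitz bound \eqref{eq:local_y_lipschitz} from the proof of Lemma~\ref{lem:h_smooth}. This is legitimate because $\mathscr{H}_\cK(k)$ guarantees $\vx_{i,k},\bar\vx_k\in\cK$, and $\cK$ is convex. The bound then reads $\sum_i \rho_k^{-1}\bigl(|\nu_{i,k+1}-\bar\nu_{k+1}| + \bar L_0\|\vx_{i,k}-\bar\vx_k\|\bigr)$. Cauchy--Schwarz gives $\sum_i |a_i| \le \sqrt n\,\|\mathbf a\|$, and the rows of $\vE_{\nu,k+1}$ and $\vE_{x,k}$ are exactly these deviations, which yields \eqref{eq:mismatch_dual_new}.

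For \eqref{eq:mismatch_primal_new}, I would split each term as
\[
y_{\rho_k,i}(\nu_{i,k+1},\vx_{i,k})\nabla f_i(\vx_{i,k}) - y_{\rho_k,i}(\bar\nu_{k+1},\bar\vx_k)\nabla f_i(\bar\vx_k)
\]
into two pieces. The first is $y_{\rho_k,i}(\nu_{i,k+1},\vx_{i,k})\bigl(\nabla f_i(\vx_{i,k})-\nabla f_i(\bar\vx_k)\bigr)$, bounded by $L\|\vx_{i,k}-\bar\vx_k\|$ using $0\le y\le1$ and $L$-smoothness. The second is $\bigl(y_{\rho_k,i}(\nu_{i,k+1},\vx_{i,k})-y_{\rho_k,i}(\bar\nu_{k+1},\bar\vx_k)\bigr)\nabla f_i(\bar\vx_k)$, bounded by $\bar L_0\rho_k^{-1}\bigl(|\nu_{i,k+1}-\bar\nu_{k+1}| + \bar L_0\|\vx_{i,k}-\bar\vx_k\|\bigr)$ via \eqref{eq:local_y_lipschitz} and $\|\nabla f_i(\bar\vx_k)\|\le\bar L_0$. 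Summing over $i$ and applying the same Cauchy--Schwarz step gives exactly the constants in \eqref{eq:mismatch_primal_new}.

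The argument is routine. The only point needing care is using \eqref{eq:local_y_lipschitz} with mixed arguments, where both $\nu$ and $\vx$ differ simultaneously. That estimate was stated for arbitrary pairs in $\cK\times\R$, so it applies directly once the localization event places all points in $\cK$. One should also check the indexing carefully: the weights $y_{i,k+1}$ use $\vx_{i,k}$ and $\nu_{i,k+1}$, which matches the evaluation point $(\bar\nu_{k+1},\bar\vx_k)$ and the lagged gradient in $\vG_{k+1}$.
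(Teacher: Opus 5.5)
Your proposal is correct and follows essentially the same route as the paper. You compare the local clipped weights $y_{i,k+1}=y_{\rho_k,i}(\nu_{i,k+1},\vx_{i,k})$ with $\hat y_{i,k+1}=y_{\rho_k,i}(\bar\nu_{k+1},\bar\vx_k)$ via nonexpansiveness of $\Pi_{[0,1]}$ and Cauchy--Schwarz, and you split the primal mismatch into a gradient-difference piece and a weight-difference piece; the only cosmetic difference is that the paper uses the mirror-image split ($\hat y_{i,k+1}$ on the gradient difference, $\nabla f_i(\vx_{i,k})$ on the weight difference), which gives identical constants since both $\vx_{i,k}$ and $\bar\vx_k$ lie in $\cK$.
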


\begin{proof}
	Let
	\begin{equation}
		\label{eq:def_hat_y}
		\hat{\vy}_{k+1}
		:=
		\vy_{\rho_k}(\bar{\nu}_{k+1},\bar{\vx}_k)
		=
		\fkh{
			\Pi_{[0,1]}\!\left(\frac{f_1(\bar{\vx}_k)-\bar{\nu}_{k+1}}{\rho_k}\right),
			\ldots,
			\Pi_{[0,1]}\!\left(\frac{f_n(\bar{\vx}_k)-\bar{\nu}_{k+1}}{\rho_k}\right)
		}^\top.
	\end{equation}
	Using $n\bar{s}_{k+1}=1-\mathbf{1}_n^\top\vy_{k+1}$ in \eqref{eq:avg_s_identity} and $\nabla_\nu h_{\rho_k}(\bar{\nu}_{k+1},\bar{\vx}_k) = 1-\mathbf{1}_n^\top\hat{\vy}_{k+1}$ from \eqref{eq:def_hat_y}, we have
	\begin{equation*}
		\bigl|\nabla_\nu h_{\rho_k}(\bar{\nu}_{k+1},\bar{\vx}_k)-n\bar{s}_{k+1}\bigr|
		=
		\bigl|\mathbf{1}_n^\top(\vy_{k+1}-\hat{\vy}_{k+1})\bigr|
		\le
		\sqrt n\,\|\vy_{k+1}-\hat{\vy}_{k+1}\|.
	\end{equation*}
	The nonexpansiveness of $\Pi_{[0,1]}$ and the $\bar L_0$-Lipschitz continuity of $f_i$ yield \eqref{eq:mismatch_dual_new}.

	For the primal mismatch, using \eqref{eq:avg_s_identity}, we obtain
	\begin{equation*}
		\begin{aligned}
			& \nabla_{\vx} h_{\rho_k}(\bar{\nu}_{k+1},\bar{\vx}_k)-n\bar{\vd}_{k+1} \\
			= {} &
			\sum_{i=1}^n
			\hat{y}_{i,k+1}\bigl(\nabla f_i(\bar{\vx}_k)-\nabla f_i(\vx_{i,k})\bigr)
			+
			\sum_{i=1}^n
			(\hat{y}_{i,k+1}-y_{i,k+1})\nabla f_i(\vx_{i,k}).
		\end{aligned}
	\end{equation*}
	Since $\hat{y}_{i,k+1}\in[0,1]$, $\|\nabla f_i\|\le \bar L_0$, and $f_i$ is
	$L$-smooth,
	\begin{equation*}
		\bigl\|\nabla_{\vx} h_{\rho_k}(\bar{\nu}_{k+1},\bar{\vx}_k)-n\bar{\vd}_{k+1}\bigr\|
		\le
		L\sum_{i=1}^n\|\vx_{i,k}-\bar{\vx}_k\|
		+
		\bar L_0\sum_{i=1}^n|\hat{y}_{i,k+1}-y_{i,k+1}|.
	\end{equation*}
	Combining this estimate with the preceding bound on
	$\|\vy_{k+1}-\hat{\vy}_{k+1}\|$ gives \eqref{eq:mismatch_primal_new}.
\end{proof}

In the following lemma, we provide upper bounds on the distance between the iterates generated in two successive steps of our algorithm.

\begin{lemma}
	\label{lem:elementary_variations}
	Let Assumptions~\ref{asmp:smooth} and~\ref{asmp:graph} hold. 
	Then the following two relationships hold for every $k \geq 0$,
	\begin{equation}
		\label{eq:X_increment_sync}
		\norm{\vX_{k+1}-\vX_k}
		\leq
		(1+\rho_W)\norm{\vE_{x,k}}
		+\eta_{x,k}\norm{\vE_{d,k+1}}
		+\sqrt n\,\eta_{x,k}\norm{\bar{\vd}_{k+1}},
	\end{equation}
	and
	\begin{equation}
		\label{eq:nu_increment_sync}
		\norm{\boldsymbol{\nu}_{k+1}-\boldsymbol{\nu}_k}
		\leq
		(1+\rho_W)\norm{\vE_{\nu,k}}
		+\eta_{\nu,k}\norm{\vE_{s,k}}
		+\sqrt n\,\eta_{\nu,k}\abs{\bar s_k}.
	\end{equation}
	Moreover, if $\mathscr{H}_{\cK} (k)$ holds, we have
	\begin{equation}
		\label{eq:y_variation_sync}
		\norm{\vy_{k+1}-\vy_k}
		\leq
		\frac{\bar L_0}{\rho_k}\norm{\vX_k-\vX_{k-1}}
		+\frac{1}{\rho_k}\norm{\boldsymbol{\nu}_{k+1}-\boldsymbol{\nu}_k}
		+\sqrt n\,\delta_k,
	\end{equation}
	and
	\begin{equation}
		\label{eq:G_variation_sync}
		\norm{\vG_{k+1}-\vG_k}
		\leq
		\bar L_0\norm{\vy_{k+1}-\vy_k}
		+L\norm{\vX_k-\vX_{k-1}},
	\end{equation}
	where $\delta_0 := 0$ and $\delta_k := (\rho_{k - 1} - \rho_k) / \rho_{k - 1} = 1 - ((k - 1 + \tau) / (k + \tau))^{1/3}$ for $k \geq 1$.
\end{lemma}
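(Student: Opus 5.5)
The first two bounds are purely algebraic consequences of the compact recursions, and the last two follow from the clipped-weight formula combined with the localization event $\mathscr{H}_{\cK}(k)$.

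\emph{Bound \eqref{eq:X_increment_sync}.} I start from \eqref{eq:compact_x}, written as
\[
\vX_{k+1}-\vX_k=(\vW-\vI)\vX_k-\eta_{x,k}\vD_{k+1}.
\]
Since $\vW\mathbf{1}_n=\mathbf{1}_n$, I have $(\vW-\vI)\vX_k=(\vW-\vI)\vE_{x,k}$. By \eqref{eq:mixing_contraction} with $\bar{\boldsymbol\omega}=0$ for the centred matrix $\vE_{x,k}$, this gives $\|\vW\vE_{x,k}\|\le\rho_W\|\vE_{x,k}\|$. Hence
\[
\|(\vW-\vI)\vE_{x,k}\|\le(1+\rho_W)\|\vE_{x,k}\|.
\]
Next I split $\vD_{k+1}=\vE_{d,k+1}+\mathbf{1}_n\bar{\vd}_{k+1}\zz$ and use $\|\mathbf{1}_n\bar{\vd}_{k+1}\zz\|=\sqrt n\|\bar{\vd}_{k+1}\|$. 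The triangle inequality then gives \eqref{eq:X_increment_sync}.

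\emph{Bound \eqref{eq:nu_increment_sync}.} This is identical, starting from \eqref{eq:compact_nu} and splitting $\vs_k=\vE_{s,k}+\mathbf{1}_n\bar s_k$.

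\emph{Bound \eqref{eq:y_variation_sync}.} I compare coordinatewise. For $k\ge1$,
\[
y_{i,k+1}=\Pi_{[0,1]}\bigl((f_i(\vx_{i,k})-\nu_{i,k+1})/\rho_k\bigr),\qquad
y_{i,k}=\Pi_{[0,1]}\bigl((f_i(\vx_{i,k-1})-\nu_{i,k})/\rho_{k-1}\bigr).
\]
The difficulty is that the two clipped quantities use different scalings $\rho_k$ and $\rho_{k-1}$. I insert the intermediate quantity
\[
\tilde y_{i,k}:=\Pi_{[0,1]}\bigl((f_i(\vx_{i,k-1})-\nu_{i,k})/\rho_k\bigr).
\]
\begin{itemize}
\item \textbf{Same scaling.} Nonexpansiveness of $\Pi_{[0,1]}$ and the mean value theorem on the segment $[\vx_{i,k-1},\vx_{i,k}]\subset\cK$ give
\[
|y_{i,k+1}-\tilde y_{i,k}|\le\rho_k^{-1}\bigl(\bar L_0\|\vx_{i,k}-\vx_{i,k-1}\|+|\nu_{i,k+1}-\nu_{i,k}|\bigr).
\]
The segment lies in $\cK$ because $\cK$ is convex and both endpoints lie in $\cK$ under $\mathscr{H}_{\cK}(k)$. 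For $k=0$ the convention $\vx_{i,-1}=\vx_{i,0}$ applies.
\item \textbf{Scaling change.} I write $t=f_i(\vx_{i,k-1})-\nu_{i,k}$ and $r=\rho_k/\rho_{k-1}\in(0,1]$. Then $\tilde y_{i,k}=\Pi_{[0,1]}(t/\rho_k)$ and $y_{i,k}=\Pi_{[0,1]}(rt/\rho_k)$. I need the scalar estimate
\[
|\Pi_{[0,1]}(u)-\Pi_{[0,1]}(ru)|\le 1-r \qquad\text{for } r\in(0,1].
\]
For $u\le0$ both projections are $0$. For $0<u\le 1/r$ the difference is at most $\min(u,1)-ru$, which is maximised at $u=1$, giving $1-r$. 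For $u>1/r$ the difference is $0$. Since $1-r=\delta_k$, and for $k=0$ we have $\rho_{-1}=\rho_0$ so $\delta_0=0$, this bound holds in all cases.
\end{itemize}
Summing these coordinate bounds and taking Euclidean norms gives \eqref{eq:y_variation_sync}, with the $\delta_k$ contribution becoming $\sqrt n\,\delta_k$.

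\emph{Bound \eqref{eq:G_variation_sync}.} I split each row of $\vG_{k+1}-\vG_k$ as
\[
y_{i,k+1}\nabla f_i(\vx_{i,k})-y_{i,k}\nabla f_i(\vx_{i,k-1})
=(y_{i,k+1}-y_{i,k})\nabla f_i(\vx_{i,k})+y_{i,k}\bigl(\nabla f_i(\vx_{i,k})-\nabla f_i(\vx_{i,k-1})\bigr).
\]
I bound the first term using $\|\nabla f_i\|\le\bar L_0$ on $\cK$. I bound the second using $y_{i,k}\in[0,1]$ and $L$-smoothness. Taking Frobenius norms over rows and applying the triangle inequality gives \eqref{eq:G_variation_sync}.

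The only nonroutine step is the scaling-change estimate in \eqref{eq:y_variation_sync}. The remaining steps are bookkeeping with \eqref{eq:mixing_contraction}.
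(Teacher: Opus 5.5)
Your proposal is correct and follows the paper's proof essentially step for step. It uses the same decomposition of $\vX_{k+1}-\vX_k$ and $\boldsymbol{\nu}_{k+1}-\boldsymbol{\nu}_k$ via $(\vW-\vI_n)\mathbf{1}_n=0$, the same intermediate vector $\tilde{\vy}_k$ rescaled by $\rho_k$, and the same row splitting of $\vG_{k+1}-\vG_k$; your explicit case analysis for $\abs{\Pi_{[0,1]}(u)-\Pi_{[0,1]}(ru)}\le 1-r$ is a useful addition, because the paper attributes this bound to monotonicity and nonexpansiveness alone, which by themselves give only $(1-r)\abs{u}$.
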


\begin{proof}
	From \eqref{eq:compact_x}, we have
	\begin{equation*}
		\vX_{k + 1} - \vX_k = (\vW - \vI_n) \vX_k - \eta_{x, k} \vD_{k + 1}.
	\end{equation*}
	Using $\vX_k = \vE_{x, k} + \mathbf{1}_n \bar{\vx}_k^\top$, $\vD_{k + 1} = \vE_{d, k + 1} + \mathbf{1}_n \bar{\vd}_{k + 1}^\top$, and $(\vW - \vI_n) \mathbf{1}_n = 0$ gives
	\begin{equation*}
		\vX_{k + 1} - \vX_k = (\vW - \vI_n) \vE_{x, k} - \eta_{x, k} \vE_{d, k + 1} - \eta_{x, k} \mathbf{1}_n \bar{\vd}_{k + 1}^\top.
	\end{equation*}
	Taking Frobenius norms yields \eqref{eq:X_increment_sync}. 
	The proof of \eqref{eq:nu_increment_sync} follows analogously from \eqref{eq:compact_nu}.

	Now we fix $k \geq 0$ such that $\mathscr{H}_{\cK} (k)$ holds. 
	Let
	\begin{equation}
		\label{eq:tilde-y}
		\tilde{\vy}_k
		:=
		\Pi_{[0,1]^n}
		\left(
		\frac{1}{\rho_k}
		\left(
		[f_1(\vx_{1,k-1}),\ldots,f_n(\vx_{n,k-1})]^\top
		-\boldsymbol{\nu}_k
		\right)
		\right).
	\end{equation}
	By the nonexpansiveness of $\Pi_{[0,1]^n}$ and the $\bar{L}_0$-Lipschitz continuity of $f_i$, we have
	\begin{equation}
		\label{eq:dist-yk+1}
		\norm{\vy_{k + 1} - \tilde{\vy}_k}
		\leq
		\frac{\bar{L}_0}{\rho_k} \norm{\vX_k - \vX_{k - 1}}
		+\frac{1}{\rho_k} \norm{\boldsymbol{\nu}_{k + 1} - \boldsymbol{\nu}_k}.
	\end{equation}
	For $k = 0$, the conventions $\vX_{-1} = \vX_0$ and $\rho_{-1} = \rho_0$, together with the initialization of $\vy_0$, give $\tilde{\vy}_0 = \vy_0$. 
	For $k \geq 1$, the monotonicity of $\{\rho_k\}$ and the coordinatewise nonexpansiveness imply
	\begin{equation*}
		\abs{\tilde y_{i, k} - y_{i, k}}
		\leq
		\frac{\rho_{k - 1} - \rho_k}{\rho_{k - 1}}
		=\delta_k,
	\end{equation*}
	for all $i \in \cN$. 
	Consequently, for every $k \geq 0$, we can obtain that
	\begin{equation}
		\label{eq:dist-tilde-y}
		\norm{\tilde{\vy}_k - \vy_k}
		\leq
		\sqrt{n} \delta_k.
	\end{equation}
	Combining two relationships \eqref{eq:dist-yk+1} and \eqref{eq:dist-tilde-y} gives \eqref{eq:y_variation_sync}. 
	Finally, for each $i \in \cN$, it holds that
	\begin{equation*}
		\norm{ [\vG_{k + 1} - \vG_k]_i }
		\leq
		\abs{y_{i, k + 1} - y_{i, k}} \norm{\nabla f_i (\vx_{i, k})}
		+ \abs{y_{i, k}} \norm{\nabla f_i (\vx_{i, k}) - \nabla f_i (\vx_{i, k - 1})}.
	\end{equation*}
	Using $\|\nabla f_i\| \leq \bar{L}_0$, $|y_{i, k}| \leq 1$, and the $L$-smoothness of $f_i$ proves \eqref{eq:G_variation_sync}. 
\end{proof}

Since $\vy_k$ is computed using the previous primal iterate and the previous regularization parameter, we further need the following lagged dual mismatch bound.

\begin{lemma}
	\label{lem:dual_mismatch_stale}
	Suppose that Assumptions~\ref{asmp:smooth} and~\ref{asmp:graph} hold. 
	Then, for every $k \geq 0$ such that $\mathscr{H}_{\cK} (k)$ holds, we have
	\begin{equation}
		\label{eq:dual_mismatch_sq}
		\begin{aligned}
			\abs{\nabla_\nu h_{\rho_k}(\bar\nu_k,\bar\vx_k) - n \bar{s}_k}^2
			\leq {} &
			\frac{4n}{\rho_k^2}
			\dkh{
				\norm{\vE_{\nu,k}}^2
				+\bar L_0^2\norm{\vE_{x,k-1}}^2
			}\\
			&
			+4n^2
			\dkh{
				\frac{\bar L_0^2\eta_{x,k-1}^2}{\rho_k^2}\norm{\bar\vd_k}^2
				+\delta_k^2
			},
		\end{aligned}
	\end{equation}
	where $\delta_k$ is defined in Lemma~\ref{lem:elementary_variations}. 
\end{lemma}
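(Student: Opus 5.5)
We would follow the template of Lemma~\ref{lem:mismatch}, but compare $\vy_k$ with the ``ideal'' weights evaluated at the current averaged pair and at the current parameter $\rho_k$. Define
\begin{equation*}
	\check{\vy}_k := \vy_{\rho_k}(\bar\nu_k,\bar\vx_k),
\end{equation*}
so that $\nabla_\nu h_{\rho_k}(\bar\nu_k,\bar\vx_k)=1-\mathbf 1_n^\top\check{\vy}_k$. By \eqref{eq:avg_s_identity}, $n\bar s_k=1-\mathbf 1_n^\top\vy_k$. Hence the left-hand side of \eqref{eq:dual_mismatch_sq} equals $|\mathbf 1_n^\top(\vy_k-\check{\vy}_k)|^2$, which is at most $n\|\vy_k-\check{\vy}_k\|^2$ by Cauchy--Schwarz.

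We then split $\vy_k-\check{\vy}_k=(\vy_k-\tilde{\vy}_k)+(\tilde{\vy}_k-\check{\vy}_k)$, where $\tilde{\vy}_k$ is the auxiliary vector \eqref{eq:tilde-y} from the proof of Lemma~\ref{lem:elementary_variations}. That vector uses $\rho_k$, the lagged local iterates $\vx_{i,k-1}$, and the local duals $\nu_{i,k}$. The first piece is already controlled by \eqref{eq:dist-tilde-y}: $\|\vy_k-\tilde{\vy}_k\|\le\sqrt n\,\delta_k$. Both $\tilde{\vy}_k$ and $\check{\vy}_k$ use the same $\rho_k$, so nonexpansiveness of $\Pi_{[0,1]}$ and the $\bar L_0$-Lipschitz continuity of $f_i$ on $\cK$ give, coordinatewise,
\begin{equation*}
	|\tilde y_{i,k}-\check y_{i,k}|\le \rho_k^{-1}\bigl(|\nu_{i,k}-\bar\nu_k|+\bar L_0\|\vx_{i,k-1}-\bar\vx_k\|\bigr).
\end{equation*}
This step needs $\vx_{i,k-1},\bar\vx_k\in\cK$, which $\mathscr H_\cK(k)$ provides; for $k=0$ the convention $\vX_{-1}=\vX_0$ applies. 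We then write $\vx_{i,k-1}-\bar\vx_k=(\vx_{i,k-1}-\bar\vx_{k-1})+(\bar\vx_{k-1}-\bar\vx_k)$ and use \eqref{eq:average_updates} to get $\bar\vx_{k-1}-\bar\vx_k=\eta_{x,k-1}\bar\vd_k$ for $k\ge1$. For $k=0$ we interpret this term as zero, or note that it is harmless since $\bar\vd_0$ appears with a nonnegative coefficient. Summing the squares over $i$ yields
\begin{equation*}
	\|\tilde{\vy}_k-\check{\vy}_k\|\le \rho_k^{-1}\bigl(\|\vE_{\nu,k}\|+\bar L_0\|\vE_{x,k-1}\|+\sqrt n\,\bar L_0\eta_{x,k-1}\|\bar\vd_k\|\bigr).
\end{equation*}

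Finally, we collect the four terms $a_1=\|\vE_{\nu,k}\|/\rho_k$, $a_2=\bar L_0\|\vE_{x,k-1}\|/\rho_k$, $a_3=\sqrt n\bar L_0\eta_{x,k-1}\|\bar\vd_k\|/\rho_k$ and $a_4=\sqrt n\delta_k$. Applying $(a_1+a_2+a_3+a_4)^2\le 4\sum a_j^2$ and multiplying by $n$ gives exactly the constants $4n/\rho_k^2$ and $4n^2$ in \eqref{eq:dual_mismatch_sq}.

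We expect no real obstacle here. The only delicate points are the bookkeeping at $k=0$, where the conventions $\rho_{-1}=\rho_0$, $\vX_{-1}=\vX_0$ and $\delta_0=0$ apply, and checking that every point where the Lipschitz bound is used lies in $\cK$. The latter holds because $\cK$ is convex and contains all iterates and averages up to index $k$.
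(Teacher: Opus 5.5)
Your proposal is correct and follows essentially the paper's own route. You pass through the same auxiliary vector $\tilde\vy_k$, use the $\sqrt n\,\delta_k$ bound from \eqref{eq:dist-tilde-y} and the $\bar L_0$-Lipschitz bounds on $\cK$, and close with the four-term Young inequality, so the constants come out identically. The differences are cosmetic: the paper inserts the intermediate point $\vy_{\rho_k}(\bar\nu_k,\bar\vx_{k-1})$ and disposes of $k=0$ separately by noting that the left-hand side vanishes, whereas you merge those two pieces into one coordinatewise estimate and let the conventions $\vX_{-1}=\vX_0$, $\delta_0=0$ cover $k=0$.
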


\begin{proof}
	At $k=0$, the initialization of Algorithm~\ref{alg:defair_pdc} gives $\vE_{\nu, 0} = \vE_{x, -1} = 0$, $\vy_0 = \vy_{\rho_0} (\bar{\nu}_0, \bar{\vx}_0)$, and $n \bar{s}_0 = \nabla_\nu h_{\rho_0} (\bar{\nu}_0, \bar{\vx}_0)$. 
	Hence, the left-hand side of \eqref{eq:dual_mismatch_sq} is zero, and the asserted estimate follows immediately.

	Now we fix $k \geq 1$. 
	Using \eqref{eq:avg_s_identity} and the formula for $\nabla_\nu h_{\rho_k}$, we have
	\begin{equation*}
		n \bar{s}_k = 1 - \mathbf{1}_n^\top \vy_k,
		\quad\mbox{and}\quad
		\nabla_\nu h_{\rho_k} (\bar{\nu}_k, \bar{\vx}_k)
		= 1 - \mathbf{1}_n^\top \vy_{\rho_k} (\bar{\nu}_k, \bar{\vx}_k).
	\end{equation*}
	Then it can be readily verified that
	\begin{equation*}
		\begin{aligned}
			\abs{\nabla_\nu h_{\rho_k} (\bar\nu_k, \bar\vx_k) - n \bar{s}_k}
			\leq {} &
			\sqrt{n} \norm{\vy_k - \tilde{\vy}_k}
			+\sqrt{n} \norm{ \tilde{\vy}_k - \vy_{\rho_k} (\bar{\nu}_k, \bar{\vx}_{k - 1})} \\
			& + \sqrt{n} \norm{\vy_{\rho_k} (\bar{\nu}_k, \bar{\vx}_{k - 1}) - \vy_{\rho_k} (\bar{\nu}_k, \bar{\vx}_k)},
		\end{aligned}
	\end{equation*}
	where $\tilde{\vy}_k$ is defined in \eqref{eq:tilde-y}. 
	The first term is bounded by \eqref{eq:dist-tilde-y}. 
	The nonexpansiveness of $\Pi_{[0,1]^n}$ and the $\bar{L}_0$-Lipschitz continuity of $f_i$ give
	\begin{equation*}
		\norm{\tilde{\vy}_k - \vy_{\rho_k} (\bar{\nu}_k, \bar{\vx}_{k - 1})}
		\leq \frac{1}{\rho_k} \norm{\vE_{\nu, k}} 
		+ \frac{\bar{L}_0}{\rho_k} \norm{\vE_{x, k - 1}}.
	\end{equation*}
	Finally, it follows from $\bar{\vx}_k - \bar{\vx}_{k - 1} = - \eta_{x, k - 1} \bar{\vd}_k$ that
	\begin{equation*}
		\norm{ \vy_{\rho_k} (\bar{\nu}_k, \bar{\vx}_{k - 1}) - \vy_{\rho_k} (\bar{\nu}_k, \bar{\vx}_k) }
		\leq
		\frac{\bar{L}_0 \sqrt{n} \eta_{x, k - 1}}{\rho_k} \norm{\bar{\vd}_k}.
	\end{equation*}
	Combining these estimates and applying Young's inequality proves \eqref{eq:dual_mismatch_sq}.
\end{proof}

\subsection{Averaged descent}

\label{subsec:one_step_descent}

The purpose of this subsection is to derive a descent property for the regularized dual function along the averaged iterates. 
Our analysis balances negative gradient terms against positive network errors and regularization drifts.

We begin by examining the descent generated by the averaged dual update. 
This estimate forms the dual component of the overall descent argument.

\begin{lemma}
	\label{lem:dual_half_step_sync}
	Suppose that Assumptions~\ref{asmp:smooth} and~\ref{asmp:graph} hold. 
	Let the algorithmic parameters satisfy \eqref{eq:gc_rho_schedule} with $0 < \alpha \leq 1 / 4$ and $\tau \geq 1$. 
	Then it holds that
	\begin{equation}
		\label{eq:dual_half_step_sync}
		\begin{aligned}
			h_{\rho_k}(\bar{\nu}_{k+1},\bar{\vx}_k)
			\leq {} &
			h_{\rho_k}(\bar{\nu}_k,\bar{\vx}_k)
			-
			\frac{\eta_{\nu,k}}{2n}
			\bigl|\nabla_\nu h_{\rho_k}(\bar{\nu}_k,\bar{\vx}_k)\bigr|^2\\
			&
			+
			\frac{5\eta_{\nu,k}}{4n}
			\bigl|\nabla_\nu h_{\rho_k}(\bar{\nu}_k,\bar{\vx}_k)-n\bar{s}_k\bigr|^2,
		\end{aligned}
	\end{equation}
	for any $k \geq 0$. 
\end{lemma}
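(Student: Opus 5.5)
The argument is a standard descent-lemma computation in the scalar variable $\nu$ with $\bar{\vx}_k$ held fixed. By Lemma~\ref{lem:h_smooth}(i), the map $\nu\mapsto \nabla_\nu h_{\rho_k}(\nu,\bar{\vx}_k)$ is $(n/\rho_k)$-Lipschitz. This part of the lemma needs no localization, since the $(n/\rho_k)$ bound comes only from the nonexpansiveness of $\Pi_{[0,1]}$, which is why the statement requires no event $\mathscr{H}_{\cK}(k)$. The averaged update \eqref{eq:average_updates} gives $\bar\nu_{k+1}-\bar\nu_k=-\eta_{\nu,k}\bar s_k$. The descent lemma then yields
\begin{equation*}
h_{\rho_k}(\bar\nu_{k+1},\bar\vx_k)\le h_{\rho_k}(\bar\nu_k,\bar\vx_k)-\eta_{\nu,k}\, g\,\bar s_k+\frac{n\eta_{\nu,k}^2}{2\rho_k}\bar s_k^2,
\end{equation*}
where $g:=\nabla_\nu h_{\rho_k}(\bar\nu_k,\bar\vx_k)$.

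Next write $\bar s_k=(g-e)/n$, with $e:=g-n\bar s_k$ the mismatch appearing on the right of \eqref{eq:dual_half_step_sync}. Substituting $\eta_{\nu,k}=\alpha\rho_k$, the quadratic coefficient becomes $\frac{n\alpha^2\rho_k^2}{2\rho_k}\cdot\frac{1}{n^2}=\frac{\alpha\eta_{\nu,k}}{2n}$. The linear term becomes $-\frac{\eta_{\nu,k}}{n}(g^2-ge)$. The increment is therefore
\begin{equation*}
\frac{\eta_{\nu,k}}{n}\Bigl[-g^2+ge+\frac{\alpha}{2}(g-e)^2\Bigr].
\end{equation*}

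Finally, bound the bracket with Young's inequality. Use $ge\le \frac14 g^2+e^2$ and $(g-e)^2\le 2g^2+2e^2$. Since $\alpha\le 1/4$, this gives
\begin{equation*}
-g^2+ge+\frac{\alpha}{2}(g-e)^2\le -g^2+\frac14 g^2+e^2+\frac14 g^2+\frac14 e^2=-\frac12 g^2+\frac54 e^2,
\end{equation*}
which is exactly \eqref{eq:dual_half_step_sync}.

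The only delicate point is choosing the Young weights so that both constants $1/2$ and $5/4$ come out exactly. The choice above does this, so no step is expected to be a real obstacle.
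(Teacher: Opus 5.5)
Your proposal is correct and matches the paper's proof: both apply the descent lemma in $\nu$ using the global $(n/\rho_k)$-Lipschitz bound of Lemma~\ref{lem:h_smooth}(i), write $n\bar s_k = g - e$, and then apply the Young bounds $ge \le \frac14 g^2 + e^2$ and $(g-e)^2 \le 2g^2 + 2e^2$. The paper keeps the ratio $\eta_{\nu,k}/\rho_k$ explicit until the last step, while you substitute $\alpha$ immediately; with $\alpha \le 1/4$ both give the constants $\frac12$ and $\frac54$, and your remark that no localization event is needed is also right.
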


\begin{proof}
	Using \eqref{eq:average_updates} and the $n/\rho_k$-smoothness of $h_{\rho_k}$
	with respect to $\nu$ gives
	\begin{equation*}
		h_{\rho_k}(\bar{\nu}_{k+1},\bar{\vx}_k)
		\le
		h_{\rho_k}(\bar{\nu}_k,\bar{\vx}_k)
		-
		\eta_{\nu,k}\langle \nabla_\nu h_{\rho_k}(\bar{\nu}_k,\bar{\vx}_k),\bar{s}_k\rangle
		+
		\frac{n\eta_{\nu,k}^2}{2\rho_k}|\bar{s}_k|^2.
	\end{equation*}
	Write
	\begin{equation*}
		n\bar{s}_k
		=
		\nabla_\nu h_{\rho_k}(\bar{\nu}_k,\bar{\vx}_k)
		-
		\bigl(
		\nabla_\nu h_{\rho_k}(\bar{\nu}_k,\bar{\vx}_k)-n\bar{s}_k
		\bigr).
	\end{equation*}
	Expanding the inner product and applying Young's inequality yield
	\begin{equation*}
		\begin{aligned}
			h_{\rho_k}(\bar{\nu}_{k+1},\bar{\vx}_k)-h_{\rho_k}(\bar{\nu}_k,\bar{\vx}_k)
			\leq {} &
			-\frac{\eta_{\nu,k}}{n}
			\left(\frac34-\frac{\eta_{\nu,k}}{\rho_k}\right)
			\bigl|\nabla_\nu h_{\rho_k}(\bar{\nu}_k,\bar{\vx}_k)\bigr|^2\\
			&
			+
			\frac{\eta_{\nu,k}}{n}
			\left(1+\frac{\eta_{\nu,k}}{\rho_k}\right)
			\bigl|\nabla_\nu h_{\rho_k}(\bar{\nu}_k,\bar{\vx}_k)-n\bar{s}_k\bigr|^2.
		\end{aligned}
	\end{equation*}
	The condition $\alpha \leq 1 / 4$ gives $\eta_{\nu,k}\le\rho_k/4$, which proves \eqref{eq:dual_half_step_sync}. 
\end{proof}

We next turn to the averaged primal update. 
The influence of tracking errors and the interaction with dual updates are explicitly quantified.

\begin{lemma}
	\label{lem:primal_half_step_sync}
	Suppose that Assumptions~\ref{asmp:smooth} and~\ref{asmp:graph} hold. 
	Let the algorithmic parameters satisfy \eqref{eq:gc_rho_schedule} with $0 < \alpha \leq 1 / (2 (\rho_0 L + \bar{L}_0^2))$ and $\tau \geq 1$. 
	For every $k \geq 0$, if $\mathscr{H}_{\cK} (k + 1)$ holds, we have
	\begin{equation}
		\label{eq:primal_half_step_sync}
		\begin{aligned}
			h_{\rho_k}(\bar{\nu}_{k+1},\bar{\vx}_{k+1})
			\leq {} &
			h_{\rho_k}(\bar{\nu}_{k+1},\bar{\vx}_k)
			-
			\frac{\eta_{x,k}}{4n}
			\bigl\|\nabla_{\vx} h_{\rho_k}(\bar{\nu}_k,\bar{\vx}_k)\bigr\|^2 \\
			&
			-
			\frac{n\eta_{x,k}}{4}\|\bar{\vd}_{k+1}\|^2
			+
			\frac{\eta_{x,k}}{2n}
			\bigl\|\nabla_{\vx} h_{\rho_k}(\bar{\nu}_{k+1},\bar{\vx}_k)-n\bar{\vd}_{k+1}\bigr\|^2 \\
			&
			+
			\frac{\bar L_0^2\eta_{x,k}\eta_{\nu,k}^2}{n\rho_k^2}
			\bigl|\nabla_\nu h_{\rho_k}(\bar{\nu}_k,\bar{\vx}_k)\bigr|^2 \\
			&
			+
			\frac{\bar L_0^2\eta_{x,k}\eta_{\nu,k}^2}{n\rho_k^2}
			\bigl|\nabla_\nu h_{\rho_k}(\bar{\nu}_k,\bar{\vx}_k)-n\bar{s}_k\bigr|^2.
		\end{aligned}
	\end{equation}
\end{lemma}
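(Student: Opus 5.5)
The plan is to apply the descent lemma in $\vx$ for $h_{\rho_k}(\bar\nu_{k+1},\cdot)$ along the averaged step $\bar\vx_{k+1}=\bar\vx_k-\eta_{x,k}\bar\vd_{k+1}$ from \eqref{eq:average_updates}. Since $\mathscr{H}_{\cK}(k+1)$ gives $\bar\vx_k,\bar\vx_{k+1}\in\cK$ and $\cK$ is convex, the segment between them lies in $\cK$. Lemma~\ref{lem:h_smooth}(ii) then supplies the constant $L_h:=nL+n\bar L_0^2/\rho_k$, and we get
\begin{equation*}
	h_{\rho_k}(\bar\nu_{k+1},\bar\vx_{k+1})\le h_{\rho_k}(\bar\nu_{k+1},\bar\vx_k)-\eta_{x,k}\jkh{\vG,\bar\vd_{k+1}}+\frac{L_h\eta_{x,k}^2}{2}\norm{\bar\vd_{k+1}}^2,
\end{equation*}
where $\vG:=\nabla_\vx h_{\rho_k}(\bar\nu_{k+1},\bar\vx_k)$. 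The stepsize condition reads $\eta_{x,k}=\alpha\rho_k\le\rho_k/(2(\rho_0L+\bar L_0^2))$. Using $\rho_k\le\rho_0$, this gives $L_h\eta_{x,k}\le n(\rho_kL+\bar L_0^2)\alpha\le n/2$. The quadratic term is therefore at most $\frac{n\eta_{x,k}}{4}\norm{\bar\vd_{k+1}}^2$.

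Next, write $\vu:=n\bar\vd_{k+1}$ and use the polarization identity $-\jkh{\vG,\vu}=\frac12(\norm{\vG-\vu}^2-\norm{\vG}^2-\norm{\vu}^2)$. This yields
\begin{equation*}
	-\eta_{x,k}\jkh{\vG,\bar\vd_{k+1}}=-\frac{\eta_{x,k}}{2n}\norm{\vG}^2-\frac{n\eta_{x,k}}{2}\norm{\bar\vd_{k+1}}^2+\frac{\eta_{x,k}}{2n}\norm{\vG-n\bar\vd_{k+1}}^2.
\end{equation*}
Combined with the quadratic bound, the $\bar\vd$ terms net to $-\frac{n\eta_{x,k}}{4}\norm{\bar\vd_{k+1}}^2$. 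The mismatch term is exactly the one in \eqref{eq:primal_half_step_sync}.

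It remains to replace $-\frac{\eta_{x,k}}{2n}\norm{\vG}^2$, which is evaluated at $\bar\nu_{k+1}$, by a term at $\bar\nu_k$. Set $\vG_0:=\nabla_\vx h_{\rho_k}(\bar\nu_k,\bar\vx_k)$ and use $\norm{\vG_0}^2\le 2\norm{\vG}^2+2\norm{\vG_0-\vG}^2$. This gives $-\frac{1}{2n}\norm{\vG}^2\le-\frac{1}{4n}\norm{\vG_0}^2+\frac{1}{2n}\norm{\vG_0-\vG}^2$. Lemma~\ref{lem:h_smooth}(iii) together with $\bar\nu_{k+1}-\bar\nu_k=-\eta_{\nu,k}\bar s_k$ yields $\norm{\vG_0-\vG}\le\frac{n\bar L_0}{\rho_k}\eta_{\nu,k}\abs{\bar s_k}$. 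Then decompose $n\bar s_k=\nabla_\nu h_{\rho_k}(\bar\nu_k,\bar\vx_k)-(\nabla_\nu h_{\rho_k}(\bar\nu_k,\bar\vx_k)-n\bar s_k)$ and use $(a+b)^2\le2a^2+2b^2$. This gives
\begin{equation*}
	\frac{\eta_{x,k}}{2n}\norm{\vG_0-\vG}^2\le\frac{\bar L_0^2\eta_{x,k}\eta_{\nu,k}^2}{n\rho_k^2}\dkh{\abs{\nabla_\nu h_{\rho_k}(\bar\nu_k,\bar\vx_k)}^2+\abs{\nabla_\nu h_{\rho_k}(\bar\nu_k,\bar\vx_k)-n\bar s_k}^2},
\end{equation*}
which matches the last two lines of \eqref{eq:primal_half_step_sync}.

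The only delicate points are bookkeeping. The descent lemma needs the local constant of Lemma~\ref{lem:h_smooth}(ii) on the whole segment, which is where convexity of $\cK$ and the event $\mathscr{H}_{\cK}(k+1)$ enter. The $\nu$-Lipschitz estimate (iii) needs $\bar\vx_k\in\cK$, which that event also provides. Finally, the stepsize bound must absorb the $\rho_k$-dependence of $L_h$, which works because $\eta_{x,k}$ scales with $\rho_k$.
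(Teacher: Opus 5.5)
Your proposal is correct and follows the paper's proof essentially step for step: the descent lemma with constant $n(L+\bar L_0^2/\rho_k)$, the polarization identity, the stepsize bound $\eta_{x,k}(L+\bar L_0^2/\rho_k)\le 1/2$, the $\nu$-Lipschitz replacement of $\bar\nu_{k+1}$ by $\bar\nu_k$ at the cost of halving the coefficient, and Young's inequality on $n\bar s_k$. Your remark that convexity of $\cK$ keeps the segment $[\bar\vx_k,\bar\vx_{k+1}]$ inside $\cK$ supplies a justification the paper leaves implicit.
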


\begin{proof}
	Using \eqref{eq:average_updates} and the
	$n(L+\bar L_0^2/\rho_k)$-smoothness of $h_{\rho_k}$ with respect to $\vx$ gives
	\begin{equation*}
		\begin{aligned}
			h_{\rho_k}(\bar{\nu}_{k+1},\bar{\vx}_{k+1})
			\leq {} &
			h_{\rho_k}(\bar{\nu}_{k+1},\bar{\vx}_k)
			-
			\eta_{x,k}
			\langle \nabla_{\vx} h_{\rho_k}(\bar{\nu}_{k+1},\bar{\vx}_k),\bar{\vd}_{k+1}\rangle\\
			&
			+
			\frac{n(L+\bar L_0^2/\rho_k)\eta_{x,k}^2}{2}\|\bar{\vd}_{k+1}\|^2.
		\end{aligned}
	\end{equation*}
	Using the identity
	\begin{equation*}
		\begin{aligned}
			-\eta_{x,k}
			\langle \nabla_{\vx} h_{\rho_k}(\bar{\nu}_{k+1},\bar{\vx}_k),\bar{\vd}_{k+1}\rangle
			= {} &
			-
			\frac{\eta_{x,k}}{2n}
			\|\nabla_{\vx} h_{\rho_k}(\bar{\nu}_{k+1},\bar{\vx}_k)\|^2
			-
			\frac{n\eta_{x,k}}{2}
			\|\bar{\vd}_{k+1}\|^2\\
			&
			+
			\frac{\eta_{x,k}}{2n}
			\|\nabla_{\vx} h_{\rho_k}(\bar{\nu}_{k+1},\bar{\vx}_k)-n\bar{\vd}_{k+1}\|^2,
		\end{aligned}
	\end{equation*}
	we obtain
	\begin{equation}\label{eq:primal_use_main}
		\begin{aligned}
			h_{\rho_k}(\bar{\nu}_{k+1},\bar{\vx}_{k+1})
			\leq {} &
			h_{\rho_k}(\bar{\nu}_{k+1},\bar{\vx}_k)
			-
			\frac{\eta_{x,k}}{2n}
			\|\nabla_{\vx} h_{\rho_k}(\bar{\nu}_{k+1},\bar{\vx}_k)\|^2\\
			&
			+
			\frac{\eta_{x,k}}{2n}
			\|\nabla_{\vx} h_{\rho_k}(\bar{\nu}_{k+1},\bar{\vx}_k)-n\bar{\vd}_{k+1}\|^2\\
			&
			-
			\frac{n\eta_{x,k}}{2}
			\left(
			1-\eta_{x,k}\Bigl(L+\frac{\bar L_0^2}{\rho_k}\Bigr)
			\right)\|\bar{\vd}_{k+1}\|^2.
		\end{aligned}
	\end{equation}
	By the stepsize condition, the last term is bounded above by $- n \eta_{x,k} \|\bar{\vd}_{k+1}\|^2 / 4$.

	The update rule $\bar{\nu}_{k + 1} - \bar{\nu}_k = - \eta_{\nu, k} \bar{s}_k$ together with the $n\bar L_0/\rho_k$-Lipschitz continuity of $\nabla_{\vx} h_{\rho_k}$ in $\nu$ gives
	\begin{equation}
		\label{eq:primal_use_2}
		\begin{aligned}
			-
			\frac{\eta_{x,k}}{2n}
			\|\nabla_{\vx} h_{\rho_k}(\bar{\nu}_{k+1},\bar{\vx}_k)\|^2
			\leq {} &
			-
			\frac{\eta_{x,k}}{4n}
			\|\nabla_{\vx} h_{\rho_k}(\bar{\nu}_k,\bar{\vx}_k)\|^2\\
			&
			+
			\frac{n\bar L_0^2\eta_{x,k}\eta_{\nu,k}^2}{2\rho_k^2}|\bar{s}_k|^2.
		\end{aligned}
	\end{equation}
	Since $n\bar{s}_k = \nabla_\nu h_{\rho_k} (\bar{\nu}_k, \bar{\vx}_k) - (\nabla_\nu h_{\rho_k} (\bar{\nu}_k, \bar{\vx}_k) - n \bar{s}_k)$, Young's inequality yields
	\begin{equation}
		\label{eq:primal_use_3}
		|\bar{s}_k|^2
		\le
		\frac{2}{n^2}
		\bigl|\nabla_\nu h_{\rho_k}(\bar{\nu}_k,\bar{\vx}_k)\bigr|^2
		+
		\frac{2}{n^2}
		\bigl|\nabla_\nu h_{\rho_k}(\bar{\nu}_k,\bar{\vx}_k)-n\bar{s}_k\bigr|^2.
	\end{equation}
	Substituting \eqref{eq:primal_use_2} and \eqref{eq:primal_use_3} into \eqref{eq:primal_use_main} gives \eqref{eq:primal_half_step_sync}.
\end{proof}

The previous descent estimates are derived under a fixed regularization parameter. 
Since the regularization parameter decreases over the iterations, it remains to control the drift induced by this change.

\begin{lemma}
	\label{lem:rho_drift_sync}
	For any $\nu \in \R$, $\vx \in \R^m$, and every $k \geq 0$, it holds that
	\begin{equation*}
		\abs{ h_{\rho_{k + 1}} (\nu, \vx) - h_{\rho_k} (\nu, \vx) }
		\leq
		\frac{n}{2} \abs{ \rho_{k + 1} - \rho_k }.
	\end{equation*}
\end{lemma}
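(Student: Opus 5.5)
The plan is to view $h_\rho(\nu,\vx)$, for fixed $\nu$ and $\vx$, as a function of the single parameter $\rho>0$ and to bound its derivative uniformly. Recall the representation
\begin{equation*}
	h_\rho(\nu,\vx) = \nu + \max_{\vy\in[0,1]^n}\hkh{ \sum_{i=1}^n (f_i(\vx)-\nu)y_i - \frac{\rho}{2}\norm{\vy}^2 }.
\end{equation*}
For fixed $\vy$, the bracketed objective is affine in $\rho$ with slope $-\norm{\vy}^2/2$. This slope lies in $[-n/2,0]$, because $\norm{\vy}^2\le n$ on $[0,1]^n$.

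The comparison between $\rho_{k+1}$ and $\rho_k$ then follows from two elementary one-sided bounds. Write $g(\vy,\rho)$ for the bracketed objective and take $\rho'\le\rho$. Since $g(\vy,\rho')\ge g(\vy,\rho)$ for every $\vy$, taking the maximum gives $h_{\rho'}\ge h_\rho$. For the reverse direction, let $\vy'$ be the maximizer at $\rho'$, which equals $\vy_{\rho'}(\nu,\vx)$ from \eqref{eq:y_rho_def}. Then
\begin{equation*}
	h_{\rho'}-h_\rho \le g(\vy',\rho')-g(\vy',\rho) = \frac{\rho-\rho'}{2}\norm{\vy'}^2 \le \frac{n}{2}(\rho-\rho').
\end{equation*}
Together these give $0\le h_{\rho'}(\nu,\vx)-h_\rho(\nu,\vx)\le \frac n2|\rho-\rho'|$. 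Applying this with $\{\rho,\rho'\}=\{\rho_k,\rho_{k+1}\}$ proves the lemma for either ordering, so monotonicity of the schedule \eqref{eq:gc_rho_schedule} is not needed.

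As an alternative check, one can use the closed form $h_\rho=\nu+\rho\sum_i\psi((f_i-\nu)/\rho)$. Writing $t_i:=(f_i(\vx)-\nu)/\rho$, differentiation in $\rho$ gives
\begin{equation*}
	\frac{\partial}{\partial\rho}h_\rho(\nu,\vx) = \sum_{i=1}^n\bigl(\psi(t_i)-t_i\psi'(t_i)\bigr).
\end{equation*}
On the three pieces of $\psi$ the summand $\psi(t)-t\psi'(t)$ equals $0$, $-t^2/2$ and $-1/2$ respectively, so it always lies in $[-1/2,0]$. Because $\psi$ is $C^1$, the mean value theorem then yields the same bound.

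There is no real obstacle here. The only point needing care is that the maximizer depends on $\rho$. The envelope/one-sided argument above avoids differentiating that maximizer, and the bound uses no assumption on $f_i$ or on $\vx$ being in $\cK$.
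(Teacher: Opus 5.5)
Your proof is correct, and your primary argument takes a different route from the paper's. Your ``alternative check'' is essentially the paper's proof. The paper sets $\phi_t(\rho) := \rho\psi(t/\rho)$, computes $\phi_t'(\rho) = \psi(t/\rho) - (t/\rho)\psi'(t/\rho)$, notes $\sup_t \abs{\psi(t) - t\psi'(t)} \le 1/2$, and applies the mean value theorem to each term with $t = f_i(\vx) - \nu$.

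Your main argument works directly from the variational form \eqref{eq:dual}. There, $h_\rho$ is $\nu$ plus a maximum, over the fixed compact set $[0,1]^n$, of functions affine in $\rho$ with slopes $-\norm{\vy}^2/2 \in [-n/2, 0]$. The two one-sided comparisons then give the bound with no differentiation, no case analysis of $\psi$, and no appeal to $\psi \in C^1$. Your route also yields slightly more. The map $\rho \mapsto h_\rho(\nu,\vx)$ is nonincreasing, and convex as a supremum of affine functions. For $\rho' \le \rho$, the increment is bounded more sharply by $\frac{\rho-\rho'}{2}\norm{\vy_{\rho'}(\nu,\vx)}^2$.

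The paper's computation is shorter given the closed form it has already derived, and it gives the pointwise derivative bound $\abs{\partial_\rho h_\rho(\nu,\vx)} \le n/2$ directly. Both arguments give the same constant $n/2$. As you note, neither uses monotonicity of the schedule, localization to $\cK$, or any property of the $f_i$.
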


\begin{proof}
	For fixed $t$, we define $\phi_t (\rho) := \rho \psi (t / \rho)$. 
	Then it holds that $\nabla \phi_t (\rho) = \psi (t / \rho) - (t / \rho) \nabla \psi (t / \rho)$. 
	By the definition of $\psi$, we have $\sup_{t \in \R} \abs{ \psi(t) - t \nabla \psi (t) } \leq 1 / 2$, and hence
	$|\nabla \phi_t (\rho)| \leq 1 / 2$. 
	Applying the mean value theorem to each term with $t = f_i (\vx) - \nu$ gives
	\begin{equation*}
		\abs{ h_{\rho_{k + 1}} (\nu, \vx) - h_{\rho_k} (\nu, \vx) }
		\leq
		\sum_{i = 1}^n
		\frac{1}{2} \abs{ \rho_{k + 1} - \rho_k }
		=
		\frac{n}{2} \abs{ \rho_{k + 1} - \rho_k }.
	\end{equation*}
	The proof is completed. 
\end{proof}

We are now in a position to combine the descent estimates for the primal and dual updates with the regularization drift. 
The resulting inequality serves as the cornerstone of the Lyapunov argument developed later. 

\begin{lemma}
	\label{lem:total_descent}
	Let Assumptions~\ref{asmp:smooth} and~\ref{asmp:graph} hold. 
	Let the algorithmic parameters satisfy \eqref{eq:gc_rho_schedule} with $0 < \alpha \leq \min \{1 / 4, 1 / (2 \bar{L}_0), 1 / (2 (\rho_0 L + \bar{L}_0^2))\}$ and $\tau \geq 1$. 
	Then, for every $k \geq 0$ such that $\mathscr{H}_{\cK} (k + 1)$ holds, we have
	\begin{equation}
		\label{eq:total_one_step_descent}
		\begin{aligned}
			h_{\rho_{k + 1}} (\bar{\nu}_{k + 1}, \bar{\vx}_{k + 1})
			- h_{\rho_k} (\bar{\nu}_k, \bar{\vx}_k)
			\leq {} &
			- \frac{\eta_{x, k}}{4 n}
			\norm{\nabla_{\vx} h_{\rho_k} (\bar{\nu}_k, \bar{\vx}_k)}^2
			- \frac{\eta_{\nu, k}}{4 n}
			\abs{\nabla_\nu h_{\rho_k} (\bar{\nu}_k, \bar{\vx}_k)}^2 \\
			&
			- \frac{n \eta_{x, k}}{4} \norm{\bar{\vd}_{k + 1}}^2
			+ \frac{6 n \bar{L}_0^2 \eta_{\nu, k} \eta_{x, k - 1}^2}{\rho_k^2} \norm{\bar{\vd}_k}^2\\
			&
			+ \frac{\bar{L}_0^2 \eta_{x, k}}{\rho_k^2} \norm{\vE_{\nu, k + 1}}^2
			+ \eta_{x, k} \dkh{ L + \frac{\bar{L}_0^2}{\rho_k} }^2 \norm{\vE_{x, k}}^2 \\
			&
			+ \frac{6 \eta_{\nu, k}}{\rho_k^2} \norm{\vE_{\nu, k}}^2
			+ \frac{6 \bar{L}_0^2 \eta_{\nu, k}}{\rho_k^2} \norm{\vE_{x, k - 1}}^2 \\
			&
			+ 6 n \eta_{\nu, k} \delta_k^2
			+ \frac{n}{2} \abs{\rho_{k + 1} - \rho_k}.
		\end{aligned}
	\end{equation}
\end{lemma}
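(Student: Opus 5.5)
The plan is to telescope the three one-step estimates already available. The difference on the left of \eqref{eq:total_one_step_descent} splits as
\begin{equation*}
\begin{aligned}
& \bigl[h_{\rho_{k+1}}(\bar\nu_{k+1},\bar\vx_{k+1})-h_{\rho_k}(\bar\nu_{k+1},\bar\vx_{k+1})\bigr]
+\bigl[h_{\rho_k}(\bar\nu_{k+1},\bar\vx_{k+1})-h_{\rho_k}(\bar\nu_{k+1},\bar\vx_k)\bigr]\\
&\quad+\bigl[h_{\rho_k}(\bar\nu_{k+1},\bar\vx_k)-h_{\rho_k}(\bar\nu_k,\bar\vx_k)\bigr].
\end{aligned}
\end{equation*}
The first bracket is bounded by Lemma~\ref{lem:rho_drift_sync}, which gives the term $\frac n2|\rho_{k+1}-\rho_k|$. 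The second bracket is bounded by Lemma~\ref{lem:primal_half_step_sync}, which is valid under $\mathscr{H}_\cK(k+1)$ and the condition on $\alpha$. The third bracket is bounded by Lemma~\ref{lem:dual_half_step_sync}, using $\alpha\le 1/4$. Adding these three estimates gives an inequality whose right-hand side contains the following terms:
\begin{itemize}
\item the negative terms $-\frac{\eta_{x,k}}{4n}\|\nabla_\vx h_{\rho_k}\|^2$, $-\frac{n\eta_{x,k}}4\|\bar\vd_{k+1}\|^2$ and $-\frac{\eta_{\nu,k}}{2n}|\nabla_\nu h_{\rho_k}|^2$;
\item a primal mismatch term $\frac{\eta_{x,k}}{2n}\|\nabla_\vx h_{\rho_k}(\bar\nu_{k+1},\bar\vx_k)-n\bar\vd_{k+1}\|^2$;
\item a coupling term of the form $\frac{\bar L_0^2\eta_{x,k}\eta_{\nu,k}^2}{n\rho_k^2}|\nabla_\nu h_{\rho_k}|^2$;
\item dual mismatch terms with total coefficient $\frac{\eta_{\nu,k}}{n}\bigl(\frac54+\frac{\bar L_0^2\eta_{x,k}\eta_{\nu,k}}{\rho_k^2}\bigr)$ multiplying $|\nabla_\nu h_{\rho_k}(\bar\nu_k,\bar\vx_k)-n\bar s_k|^2$.
\end{itemize}

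Next I absorb the coupling term. Since $\eta_{\nu,k}=\eta_{x,k}=\alpha\rho_k$, it equals $\frac{\eta_{\nu,k}}{n}\bar L_0^2\alpha^2|\nabla_\nu h_{\rho_k}|^2$. The condition $\alpha\le 1/(2\bar L_0)$ gives $\bar L_0^2\alpha^2\le 1/4$. This leaves $-\frac{\eta_{\nu,k}}{4n}|\nabla_\nu h_{\rho_k}|^2$, exactly as claimed. The same bound shows that the dual mismatch coefficient is at most $\frac{\eta_{\nu,k}}{n}\cdot\frac32$.

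The two mismatch terms are then expanded with the earlier lemmas. For the primal mismatch, I square \eqref{eq:mismatch_primal_new} from Lemma~\ref{lem:mismatch} and use $(a+b)^2\le 2a^2+2b^2$. Multiplied by $\frac{\eta_{x,k}}{2n}$, this gives $\frac{\bar L_0^2\eta_{x,k}}{\rho_k^2}\|\vE_{\nu,k+1}\|^2+\eta_{x,k}(L+\bar L_0^2/\rho_k)^2\|\vE_{x,k}\|^2$, matching the stated terms. For the dual mismatch, I insert \eqref{eq:dual_mismatch_sq} from Lemma~\ref{lem:dual_mismatch_stale} and multiply by $\frac{3\eta_{\nu,k}}{2n}$. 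The coefficients become $\frac{6\eta_{\nu,k}}{\rho_k^2}$ on $\|\vE_{\nu,k}\|^2$, $\frac{6\bar L_0^2\eta_{\nu,k}}{\rho_k^2}$ on $\|\vE_{x,k-1}\|^2$, $\frac{6n\bar L_0^2\eta_{\nu,k}\eta_{x,k-1}^2}{\rho_k^2}$ on $\|\bar\vd_k\|^2$, and $6n\eta_{\nu,k}\delta_k^2$, which are exactly the remaining terms.

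The main obstacle is purely bookkeeping: checking that the combined coefficient $\frac54+\bar L_0^2\alpha^2\le\frac32$ times the factor $4$ in Lemma~\ref{lem:dual_mismatch_stale} lands on the constant $6$. I also need to confirm that the hypotheses line up. Lemma~\ref{lem:mismatch} and Lemma~\ref{lem:dual_mismatch_stale} require $\mathscr{H}_\cK(k)$, which follows from $\mathscr{H}_\cK(k+1)$. Lemma~\ref{lem:primal_half_step_sync} requires $\mathscr{H}_\cK(k+1)$ itself, which is given. At $k=0$, the conventions $\eta_{x,-1}=\eta_{x,0}$ and $\vE_{x,-1}=\vE_{x,0}$ keep every term well defined.
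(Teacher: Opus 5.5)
Your proposal is correct and follows the paper's proof essentially step for step. The paper also adds Lemmas~\ref{lem:dual_half_step_sync}, \ref{lem:primal_half_step_sync}, and~\ref{lem:rho_drift_sync}, uses $\alpha \le 1/(2\bar L_0)$ (equivalently $\bar L_0^2\eta_{x,k}\eta_{\nu,k}\le\rho_k^2/4$) to reduce the coefficients to $-\eta_{\nu,k}/(4n)$ and $3\eta_{\nu,k}/(2n)$, and then substitutes the squared form of \eqref{eq:mismatch_primal_new} and the estimate \eqref{eq:dual_mismatch_sq}, with exactly the constants you computed.
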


\begin{proof}
	Adding the estimates in Lemmas~\ref{lem:dual_half_step_sync}, \ref{lem:primal_half_step_sync}, and~\ref{lem:rho_drift_sync} yields
	\begin{equation}
		\label{eq:total_descent_intermediate}
		\begin{aligned}
			& h_{\rho_{k+1}}(\bar{\nu}_{k+1},\bar{\vx}_{k+1})
			-
			h_{\rho_k}(\bar{\nu}_k,\bar{\vx}_k)\\
			\leq {} &
			-
			\frac{\eta_{x,k}}{4n}
			\|\nabla_{\vx} h_{\rho_k}(\bar{\nu}_k,\bar{\vx}_k)\|^2
			-
			\frac{\eta_{\nu,k}}{2n}
			|\nabla_\nu h_{\rho_k}(\bar{\nu}_k,\bar{\vx}_k)|^2
			-
			\frac{n\eta_{x,k}}{4}\|\bar{\vd}_{k+1}\|^2\\
			&
			+
			\frac{\eta_{x,k}}{2n}
			\|\nabla_{\vx} h_{\rho_k}(\bar{\nu}_{k+1},\bar{\vx}_k)-n\bar{\vd}_{k+1}\|^2\\
			&
			+
			\frac{5\eta_{\nu,k}}{4n}
			|\nabla_\nu h_{\rho_k}(\bar{\nu}_k,\bar{\vx}_k)-n\bar{s}_k|^2\\
			&
			+
			\frac{\bar L_0^2\eta_{x,k}\eta_{\nu,k}^2}{n\rho_k^2}
			|\nabla_\nu h_{\rho_k}(\bar{\nu}_k,\bar{\vx}_k)|^2
			+
			\frac{\bar L_0^2\eta_{x,k}\eta_{\nu,k}^2}{n\rho_k^2}
			|\nabla_\nu h_{\rho_k}(\bar{\nu}_k,\bar{\vx}_k)-n\bar{s}_k|^2\\
			&
			+
			\frac{n}{2}|\rho_{k+1}-\rho_k|.
		\end{aligned}
	\end{equation}
	The condition $\alpha \leq 1 / (2 \bar{L}_0)$ gives $\eta_{x,k}\eta_{\nu,k}\le\rho_k^2/(4\bar L_0^2)$, which implies that
	\begin{equation*}
		-\frac{\eta_{\nu,k}}{2n}
		+ \frac{\bar L_0^2\eta_{x,k}\eta_{\nu,k}^2}{n\rho_k^2}
		\le -\frac{\eta_{\nu,k}}{4n},
		\quad\mbox{and}\quad
		\frac{5\eta_{\nu,k}}{4n}
		+ \frac{\bar L_0^2\eta_{x,k}\eta_{\nu,k}^2}{n\rho_k^2}
		\le \frac{3\eta_{\nu,k}}{2n}.
	\end{equation*}
	Substituting these bounds into \eqref{eq:total_descent_intermediate} gives
	\begin{equation}\label{eq:total_descent_simplified}
		\begin{aligned}
			& h_{\rho_{k+1}}(\bar{\nu}_{k+1},\bar{\vx}_{k+1})
			- h_{\rho_k}(\bar{\nu}_k,\bar{\vx}_k)\\
			\leq {} &
			- \frac{\eta_{x,k}}{4n}
			\|\nabla_{\vx} h_{\rho_k}(\bar{\nu}_k,\bar{\vx}_k)\|^2
			- \frac{\eta_{\nu,k}}{4n}
			|\nabla_\nu h_{\rho_k}(\bar{\nu}_k,\bar{\vx}_k)|^2
			- \frac{n\eta_{x,k}}{4}\|\bar{\vd}_{k+1}\|^2\\
			&
			+ \frac{\eta_{x,k}}{2n}
			\|\nabla_{\vx} h_{\rho_k}(\bar{\nu}_{k+1},\bar{\vx}_k)-n\bar{\vd}_{k+1}\|^2\\
			&
			+ \frac{3\eta_{\nu,k}}{2n}
			|\nabla_\nu h_{\rho_k}(\bar{\nu}_k,\bar{\vx}_k)-n\bar{s}_k|^2
			+ \frac{n}{2}|\rho_{k+1}-\rho_k|.
		\end{aligned}
	\end{equation}
	Lemma~\ref{lem:mismatch} gives
	\begin{equation}
		\label{eq:primal_mismatch_sq}
		\norm{\nabla_{\vx} h_{\rho_k}(\bar{\nu}_{k+1},\bar{\vx}_k)-n\bar{\vd}_{k+1}}^2
		\leq
		\frac{2n\bar L_0^2}{\rho_k^2}\norm{\vE_{\nu,k+1}}^2
		+ 2n\Bigl(L+\frac{\bar L_0^2}{\rho_k}\Bigr)^2\norm{\vE_{x,k}}^2.
	\end{equation}
	Combining \eqref{eq:primal_mismatch_sq} with \eqref{eq:dual_mismatch_sq} in \eqref{eq:total_descent_simplified} gives \eqref{eq:total_one_step_descent}.
\end{proof}

\subsection{Tracking errors}

\label{subsec:tracking}

We move on to derive upper bounds on the tracking errors arising from the descent estimate of the regularized dual function. 
The ensuing analysis involves several constants that are somewhat intricate in form. 
To facilitate the narrative, we present their definitions below. 
\begin{equation}
	\label{eq:gc_basic_consts}
	\left\{
	\begin{aligned}
		& \bar\rho:=\frac{1+\rho_W^2}{2}\in(0,1),
		\qquad
		\kappa_W:=\frac{1+\rho_W^2}{1-\rho_W^2},
		\qquad
		\sigma:=\frac{1-\bar\rho}{4}, \\
		& \tau_\star
		:=
		\max\left\{
		1,
		\frac{(\bar\rho+\sigma)^3}{1-(\bar\rho+\sigma)^3}
		\right\},
		\qquad
		\theta
		:=
		\sup_{k\geq0}\frac{\rho_{k-1}}{\rho_k}
		=
		\left(\frac{\tau+1}{\tau}\right)^{1/3},\\
		& B:=\kappa_W\max\Bigl\{
		9+18\bar L_0^2,
		\;72+144\bar L_0^2+(9+18\bar L_0^2)(1+\rho_W)^2,\\
		& \hspace{66pt}9\bar L_0^2\theta^2+6(L^2\rho_0^2+3\bar L_0^4\theta^2),
		\;(18+36\bar L_0^2)/n,\\
		& \hspace{66pt}72\bar L_0^2+144\bar L_0^4
		+ (9 \bar{L}_0^2 + 18 \bar{L}_0^4 + 6 L^2 \rho_0^2) (1+\rho_W)^2,\\
		& \hspace{66pt}n(81\bar L_0^2\theta^2+6L^2\rho_0^2+162\bar L_0^4\theta^2),
		\;75n+150n\bar L_0^2
		\Bigr\}.
	\end{aligned}
	\right.
\end{equation}
These constants are independent of the iteration counter and do not enter the implementation of Algorithm~\ref{alg:defair_pdc}.

\begin{lemma}
	\label{lem:gc_error_closure}
	Suppose that Assumptions~\ref{asmp:smooth} and~\ref{asmp:graph} hold. 
	Let the algorithmic parameters satisfy \eqref{eq:gc_rho_schedule} with $0 < \alpha \leq 1$ and $\tau \geq \tau_\star$. 
	Then, for every $k \geq 0$ such that $\mathscr{H}_{\cK} (k)$ holds, we have
	\begin{equation}
		\label{eq:gc_Es_compact}
		\begin{aligned}
			\norm{\vE_{s, k + 1}}^2
			\leq {} & \dkh{\bar{\rho} + 9 \kappa_W \alpha^2} \norm{\vE_{s, k}}^2
			+ 9 \kappa_W \bar{L}_0^2 \theta^2 \alpha^2 \norm{\vE_{d, k}}^2  \\
			& 
			+ \frac{B}{\rho_k^2} \norm{\vE_{\nu, k}}^2
			+ \frac{B}{\rho_k^2} \norm{\vE_{x, k - 1}}^2
			+ B \alpha^2 \norm{\bar{\vd}_k}^2 \\
			& + B \alpha^2 \abs{\nabla_\nu h_{\rho_k} (\bar{\nu}_k, \bar{\vx}_k)}^2
			+ B \delta_k^2,
		\end{aligned}  
	\end{equation}
	and
	\begin{equation}
		\label{eq:gc_Ed_compact}
		\begin{aligned}
			\norm{\vE_{d, k + 1}}^2
			\leq {} & \dkh{ \bar{\rho} + 6 \kappa_W (L^2 \rho_0^2 + 3 \bar{L}_0^4 \theta^2) \alpha^2 } \norm{\vE_{d, k}}^2
			+ 18 \kappa_W \bar{L}_0^2 \alpha^2 \norm{\vE_{s, k}}^2 \\
			& + \frac{B}{\rho_k^2} \norm{\vE_{\nu, k}}^2
			+ \frac{B}{\rho_k^2} \norm{\vE_{x, k - 1}}^2
			+ B \alpha^2 \norm{\bar{\vd}_k}^2 \\
			& + B \alpha^2 \abs{\nabla_\nu h_{\rho_k} (\bar{\nu}_k, \bar{\vx}_k)}^2
			+ B \delta_k^2. 
		\end{aligned}
	\end{equation}  
\end{lemma}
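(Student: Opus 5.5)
The plan is to start from the linear contractions \eqref{eq:err_s_bound} and \eqref{eq:err_d_bound} in Lemma~\ref{lem:error_contraction}, square them, and apply Young's inequality in the form $(a+b)^2 \le (1+c)a^2 + (1+1/c)b^2$ with $c$ chosen so that $(1+c)\rho_W^2 = \bar\rho = (1+\rho_W^2)/2$. This choice gives $1+1/c = (1+\rho_W^2)/(1-\rho_W^2) = \kappa_W$, so
\begin{equation*}
	\norm{\vE_{s,k+1}}^2 \le \bar\rho\norm{\vE_{s,k}}^2 + \kappa_W\norm{\vy_{k+1}-\vy_k}^2,
	\qquad
	\norm{\vE_{d,k+1}}^2 \le \bar\rho\norm{\vE_{d,k}}^2 + \kappa_W\norm{\vG_{k+1}-\vG_k}^2.
\end{equation*}
This accounts for the leading coefficient $\bar\rho$ and the factor $\kappa_W$ in every other constant. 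It then remains to bound $\norm{\vy_{k+1}-\vy_k}^2$ and $\norm{\vG_{k+1}-\vG_k}^2$ in terms of the quantities listed on the right-hand sides.

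For the $\vy$-increment, I would use \eqref{eq:y_variation_sync} together with a three-term Young inequality, which produces a factor $3$. This gives $\norm{\vy_{k+1}-\vy_k}^2 \le 3\bar L_0^2\rho_k^{-2}\norm{\vX_k-\vX_{k-1}}^2 + 3\rho_k^{-2}\norm{\boldsymbol\nu_{k+1}-\boldsymbol\nu_k}^2 + 3n\delta_k^2$. I would then substitute the increment bounds.
\begin{itemize}
\item For the $\nu$-increment, use \eqref{eq:nu_increment_sync}, again with a factor-$3$ Young split, and the substitution $\eta_{\nu,k}=\alpha\rho_k$. The $\rho_k^{-2}$ cancels against $\eta_{\nu,k}^2$ and yields $9\alpha^2\norm{\vE_{s,k}}^2$, which explains the term $9\kappa_W\alpha^2$.
\item The $\bar s_k$ term is handled with $n\bar s_k = \nabla_\nu h_{\rho_k}(\bar\nu_k,\bar\vx_k) - (\nabla_\nu h - n\bar s_k)$. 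The mismatch is then controlled by Lemma~\ref{lem:dual_mismatch_stale}, which contributes $\rho_k^{-2}\norm{\vE_{\nu,k}}^2$, $\rho_k^{-2}\norm{\vE_{x,k-1}}^2$, $\alpha^2\norm{\bar\vd_k}^2$ (using $\eta_{x,k-1}/\rho_k\le\alpha\theta$ and $\alpha\le1$), and $\delta_k^2$. The $|\nabla_\nu h|^2$ term carries the factor $\alpha^2$.
\item For the $\vX$-increment at index $k-1$, use \eqref{eq:X_increment_sync}. Since $\eta_{x,k-1}^2/\rho_k^2\le\alpha^2\theta^2$, the $\norm{\vE_{d,k}}$ term gives $9\kappa_W\bar L_0^2\theta^2\alpha^2\norm{\vE_{d,k}}^2$. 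The remaining pieces are $(1+\rho_W)^2\rho_k^{-2}\norm{\vE_{x,k-1}}^2$ and $\alpha^2\theta^2\norm{\bar\vd_k}^2$.
\end{itemize}
The case $k=0$ follows from the conventions $\vX_{-1}=\vX_0$ and $\delta_0=0$.

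For the $\vG$-increment, I would use \eqref{eq:G_variation_sync} with a two-term Young split, $\norm{\vG_{k+1}-\vG_k}^2 \le 2\bar L_0^2\norm{\vy_{k+1}-\vy_k}^2 + 2L^2\norm{\vX_k-\vX_{k-1}}^2$, and then reuse the $\vy$-bound just derived. Doing so produces the coefficient $18\bar L_0^2\alpha^2$ on $\norm{\vE_{s,k}}^2$ and a term of order $\bar L_0^4\theta^2\alpha^2$ on $\norm{\vE_{d,k}}^2$. The term $2L^2\norm{\vX_k-\vX_{k-1}}^2$ is expanded by \eqref{eq:X_increment_sync}, and its $\eta_{x,k-1}^2\norm{\vE_{d,k}}^2$ piece is bounded by $\alpha^2\rho_0^2\norm{\vE_{d,k}}^2$ since $\rho_{k-1}\le\rho_0$. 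The $\norm{\vE_{x,k-1}}^2$ piece is written as $\rho_0^2\rho_k^{-2}$ times itself, using $1\le\rho_0^2/\rho_k^2$, so that it can be absorbed into $B\rho_k^{-2}\norm{\vE_{x,k-1}}^2$. Collecting terms gives the coefficient $6\kappa_W(L^2\rho_0^2+3\bar L_0^4\theta^2)\alpha^2$.

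The main obstacle is the bookkeeping needed to check that every leftover coefficient is dominated by $B$. Each entry in the maximum defining $B$ corresponds to one of the collected coefficients, namely those of $\rho_k^{-2}\norm{\vE_{\nu,k}}^2$, $\rho_k^{-2}\norm{\vE_{x,k-1}}^2$, $\alpha^2\norm{\bar\vd_k}^2$, $\alpha^2|\nabla_\nu h|^2$, and $\delta_k^2$, with the factors $n$ arising from Lemma~\ref{lem:dual_mismatch_stale}. To keep this manageable, I would track each coefficient separately rather than collapsing them early. I would also make sure that powers of $\theta$ and $\rho_0$ are introduced only through $\rho_{k-1}/\rho_k\le\theta$ and $\rho_k\le\rho_0$. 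The assumption $\tau\ge\tau_\star$ is not needed for these bounds beyond guaranteeing $\theta$ finite; it is presumably used later, in the Lyapunov closure.
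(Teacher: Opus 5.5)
Your proposal is correct and follows the paper's proof essentially step for step. The shared steps are: Young's inequality with the pair $(\bar\rho,\kappa_W)$ on \eqref{eq:err_s_bound} and \eqref{eq:err_d_bound}; the three- and two-term squared splits of \eqref{eq:y_variation_sync} and \eqref{eq:G_variation_sync}; the squared increment bounds at indices $k-1$ and $k$; the bound on $|\bar s_k|^2$ via Lemma~\ref{lem:dual_mismatch_stale}; and final absorption into $B$ using $\rho_{k-1}/\rho_k\le\theta$, $\rho_k\le\rho_0$, and $\alpha\le 1$. When you write it out, note explicitly that $\alpha\le1$ also removes the factor $\alpha^2$ from the $\rho_k^{-2}\norm{\vE_{\nu,k}}^2$, $\rho_k^{-2}\norm{\vE_{x,k-1}}^2$, and $\delta_k^2$ contributions that enter through the $\bar s_k$ term.
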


\begin{proof}
	Applying Young's inequality to \eqref{eq:err_s_bound} and \eqref{eq:err_d_bound}, with $\bar\rho$ and $\kappa_W$ defined in \eqref{eq:gc_basic_consts}, gives
	\begin{equation}
		\label{eq:gc_start_EsEd}
		\|\vE_{s,k+1}\|^2
		\le \bar\rho\|\vE_{s,k}\|^2 + \kappa_W\|\vy_{k+1}-\vy_k\|^2,
	\end{equation}
	and
	\begin{equation}
		\label{eq:gc_start_Ed}
		\|\vE_{d,k+1}\|^2
		\le \bar\rho\|\vE_{d,k}\|^2 + \kappa_W\|\vG_{k+1}-\vG_k\|^2.
	\end{equation}
	Two relationships \eqref{eq:y_variation_sync} and \eqref{eq:G_variation_sync} give
	\begin{equation}
		\label{eq:gc_y_variation_sq}
		\|\vy_{k+1}-\vy_k\|^2
		\le
		\frac{3\bar L_0^2}{\rho_k^2}\|\vX_k-\vX_{k-1}\|^2
		+
		\frac{3}{\rho_k^2}\|\boldsymbol{\nu}_{k+1}-\boldsymbol{\nu}_k\|^2
		+
		3n\delta_k^2,
	\end{equation}
	and
	\begin{equation}
		\label{eq:gc_G_variation_sq}
		\|\vG_{k+1}-\vG_k\|^2
		\le
		2\bar L_0^2\|\vy_{k+1}-\vy_k\|^2
		+
		2L^2\|\vX_k-\vX_{k-1}\|^2.
	\end{equation}
	For $k \geq 1$, squaring \eqref{eq:X_increment_sync} at index $k - 1$ and \eqref{eq:nu_increment_sync} at index $k$, and using $\eta_{x, k - 1} = \alpha \rho_{k - 1}$ and $\eta_{\nu, k} = \alpha \rho_k$, yields
	\begin{equation}
		\label{eq:gc_X_increment_sq}
		\|\vX_k-\vX_{k-1}\|^2
		\le
		3(1+\rho_W)^2\|\vE_{x,k-1}\|^2
		+
		3\alpha^2\rho_{k-1}^2\|\vE_{d,k}\|^2
		+
		3n\alpha^2\rho_{k-1}^2\|\bar{\vd}_k\|^2,
	\end{equation}
	and
	\begin{equation}
		\label{eq:gc_nu_increment_sq}   
		\|\boldsymbol{\nu}_{k+1}-\boldsymbol{\nu}_k\|^2
		\le
		3(1+\rho_W)^2\|\vE_{\nu,k}\|^2
		+
		3\alpha^2\rho_k^2\|\vE_{s,k}\|^2
		+
		3n\alpha^2\rho_k^2|\bar{s}_k|^2.
	\end{equation}
	At $k = 0$, the estimates \eqref{eq:gc_X_increment_sq} and \eqref{eq:gc_nu_increment_sq} hold trivially. 
	From \eqref{eq:dual_mismatch_sq}, $\eta_{x, k - 1} = \alpha \rho_{k - 1}$, $0 < \alpha \leq 1$, and $\rho_{k - 1} / \rho_k \leq \theta$, we obtain
	\begin{equation}
		\label{eq:gc_sbar_bound}
		\begin{aligned}
			|\bar s_k|^2
			\leq {} &
			\frac{2}{n^2}\bigl|\nabla_\nu h_{\rho_k}(\bar{\nu}_k,\bar{\vx}_k)\bigr|^2
			+
			\frac{8}{n\rho_k^2}\|\vE_{\nu,k}\|^2
			+
			\frac{8\bar L_0^2}{n\rho_k^2}\|\vE_{x,k-1}\|^2\\
			&
			+
			8\bar L_0^2\theta^2 \alpha^2\|\bar{\vd}_k\|^2
			+
			8\delta_k^2.
		\end{aligned}
	\end{equation}
	Substituting \eqref{eq:gc_sbar_bound} into \eqref{eq:gc_nu_increment_sq} gives
	\begin{equation}
		\label{eq:gc_nu_increment_sq_2}
		\begin{aligned}
			\|\boldsymbol{\nu}_{k+1}-\boldsymbol{\nu}_k\|^2
			\leq {} &
			\bigl(3(1+\rho_W)^2+24\bigr)\|\vE_{\nu,k}\|^2
			+
			3\alpha^2\rho_k^2\|\vE_{s,k}\|^2\\
			&
			+
			24\bar L_0^2\alpha^2\|\vE_{x,k-1}\|^2
			+
			24n\bar L_0^2\theta^2 \alpha^4\rho_k^2\|\bar{\vd}_k\|^2\\
			&
			+
			\frac{6\alpha^2\rho_k^2}{n}\bigl|\nabla_\nu h_{\rho_k}(\bar{\nu}_k,\bar{\vx}_k)\bigr|^2
			+
			24n\alpha^2\rho_k^2\delta_k^2.
		\end{aligned}
	\end{equation}
	Substituting \eqref{eq:gc_X_increment_sq} and \eqref{eq:gc_nu_increment_sq_2} into \eqref{eq:gc_y_variation_sq} gives
	\begin{equation}
		\label{eq:gc_y_variation_sq_2}
		\begin{aligned}
			& \|\vy_{k+1}-\vy_k\|^2 \\
			\leq {} &
			9\alpha^2\|\vE_{s,k}\|^2
			+
			9\bar L_0^2\theta^2 \alpha^2\|\vE_{d,k}\|^2\\
			&
			+
			\frac{1}{\rho_k^2}
			\dkh{
				\dkh{ 9(1+\rho_W)^2+72 } \|\vE_{\nu,k}\|^2
				+
				\dkh{ 9\bar L_0^2(1+\rho_W)^2+72\bar L_0^2 } \|\vE_{x,k-1}\|^2
			} \\
			&
			+
			81n\bar L_0^2\theta^2 \alpha^2\|\bar{\vd}_k\|^2
			+
			\frac{18\alpha^2}{n} \abs{ \nabla_\nu h_{\rho_k}(\bar{\nu}_k,\bar{\vx}_k) }^2
			+
			75n\delta_k^2.
		\end{aligned}
	\end{equation}
	Similarly, substituting \eqref{eq:gc_y_variation_sq_2} and \eqref{eq:gc_X_increment_sq} into \eqref{eq:gc_G_variation_sq}, and using $\rho_{k - 1} / \rho_k \leq \theta$ and $\alpha \leq 1$, yields
	\begin{equation}
		\label{eq:gc_G_variation_sq_2}
		\begin{aligned}
			\|\vG_{k+1}-\vG_k\|^2
			\leq {} &
			18\bar L_0^2\alpha^2\|\vE_{s,k}\|^2
			+
			\dkh{ 18\bar L_0^4\theta^2+6L^2\rho_0^2 } \alpha^2\|\vE_{d,k}\|^2\\
			&
			+
			\frac{1}{\rho_k^2}
			\dkh{ 
				\frac{B}{\kappa_W}\|\vE_{\nu,k}\|^2
				+
				\frac{B}{\kappa_W}\|\vE_{x,k-1}\|^2
			} \\
			&
			+
			\frac{B}{\kappa_W}\alpha^2\|\bar{\vd}_k\|^2
			+
			\frac{B}{\kappa_W}\alpha^2 \abs{ \nabla_\nu h_{\rho_k}(\bar{\nu}_k,\bar{\vx}_k) }^2
			+
			\frac{B}{\kappa_W}\delta_k^2.
		\end{aligned}
	\end{equation}
	The constant $B$ defined in \eqref{eq:gc_basic_consts} dominates all the coefficients in \eqref{eq:gc_y_variation_sq_2} and \eqref{eq:gc_G_variation_sq_2} that are collected in the final five terms of \eqref{eq:gc_Es_compact} and \eqref{eq:gc_Ed_compact}. 
	Substituting these two bounds into \eqref{eq:gc_start_EsEd} and \eqref{eq:gc_start_Ed} yields \eqref{eq:gc_Es_compact} and \eqref{eq:gc_Ed_compact}. 
\end{proof}

\subsection{Lyapunov argument}

\label{subsec:lyapunov}

This subsection closes the preceding estimates with the weighted Lyapunov function
\begin{equation*}
	\mathfrak{L}_k
	:=
	h_{\rho_k}(\bar\nu_k,\bar\vx_k)
	+
	\frac{M}{\rho_{k-1}}
	\dkh{\norm{\vE_{\nu,k}}^2+\norm{\vE_{x,k}}^2}
	+
	\beta M\alpha^2\rho_{k-1}
	\dkh{\norm{\vE_{s,k}}^2+\norm{\vE_{d,k}}^2},
\end{equation*}
where $\beta$ and $M$ are two positive constants defined by
\begin{equation}
	\label{eq:gc_M_choice}
	\beta:=\frac{4\kappa_W}{1-\bar\rho},
	\quad\mbox{and}\quad
	M:=\frac{16}{\sigma}
	\max\hkh{1,\;6+\bar L_0^2\bar\rho,\;(\rho_0L+\bar L_0^2)^2},
\end{equation}
respectively. 
To establish the desired result, we need to impose an upper bound on the parameter $\alpha$ appearing in \eqref{eq:gc_rho_schedule}, which is given by
\begin{equation}
	\label{eq:gc_a_lyap}
	\begin{aligned}
		\overline{\alpha}
		:= \min \Biggl\{
		&\frac{1}{4},
		\frac{1}{2 \bar{L}_0},
		\frac{1}{2 (\rho_0 L + \bar{L}_0^2)},
		\sqrt{\frac{1 - \bar{\rho}}{8 B}},
		\frac{1}{6 \sqrt{\kappa_W} \bar{L}_0},
		\frac{1}{\sqrt{12 \kappa_W (L^2 \rho_0^2 + 3 \bar{L}_0^4 \theta^2)}}, \\
		& \sqrt{\frac{\sigma}{8 \Gamma B}},
		\frac{M \sigma}{48 \theta \bar{L}_0^2},
		\sqrt{\frac{\sigma}{8 \theta \Gamma B}},
		\dkh{\frac{1}{8 n M \Gamma B}}^{1/3},
		\sqrt{\frac{n}{8 (6 n \bar{L}_0^2 \theta + M \Gamma B)}}
		\Biggr\},
	\end{aligned}
\end{equation}
with $\Gamma := 2 \beta + \kappa_W$.

\begin{lemma}
	\label{lem:gc_finite_horizon}
	Suppose that Assumptions~\ref{asmp:smooth} and~\ref{asmp:graph} hold. 
	Let the algorithmic parameters satisfy \eqref{eq:gc_rho_schedule} with $0 < \alpha \leq \overline{\alpha}$ and $\tau \geq \tau_\star$. 
	Then, for every integer $K \geq 1$ such that $\mathscr{H}_{\cK} (K)$ holds, we have
	\begin{equation}
		\label{eq:gc_finite_horizon}
		\begin{aligned}
			\mathfrak{C}
			\geq {} &
			\mathfrak{L}_K
			+
			\sum_{k = 0}^{K - 1}
			\Biggl[
			\frac{\alpha \rho_k}{4 n} \norm{\nabla_{\vx} h_{\rho_k} (\bar\nu_k, \bar\vx_k)}^2
			+
			\frac{\alpha \rho_k}{8 n} \abs{\nabla_\nu h_{\rho_k} (\bar\nu_k, \bar\vx_k)}^2 \\
			&\hspace{2cm}
			+
			\frac{n \alpha \rho_k}{8} \norm{\bar{\vd}_{k + 1}}^2
			+
			\frac{M \sigma}{4 \rho_k} \dkh{\norm{\vE_{\nu, k}}^2 + \norm{\vE_{x, k}}^2}\\
			&\hspace{2cm}
			+
			\frac{\beta M \alpha^2 (1 - \bar{\rho})}{8} \rho_k \dkh{\norm{\vE_{s, k}}^2 + \norm{\vE_{d, k}}^2}
			\Biggr],
		\end{aligned}
	\end{equation}
	where $\mathfrak{C}$ is a constant defined by
	\begin{equation*}
		\mathfrak{C}
		:=
		\mathfrak{L}_0
		+
		\widetilde{C} \rho_0 \norm{\bar{\vd}_0}^2
		+
		2 \rho_0 (6 n \alpha + M \alpha^2 \Gamma B) / 9
		+
		n \rho_0 / 2,
	\end{equation*}
	with $\widetilde{C} := \alpha^3 (6 n \bar{L}_0^2 \theta + M \alpha \Gamma B)$.
\end{lemma}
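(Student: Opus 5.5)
The plan is to establish a one-step inequality of the form
\begin{equation*}
\mathfrak{L}_{k+1} + \Theta_k \leq \mathfrak{L}_k + R_k,
\end{equation*}
for each $0 \leq k \leq K-1$. Here $\Theta_k$ is the bracketed summand in \eqref{eq:gc_finite_horizon}. The remainder is
\begin{equation*}
R_k = \frac{n}{2}(\rho_k-\rho_{k+1}) + c_1\,\delta_k^2\rho_k + c_2\bigl(\rho_{k-1}\|\bar\vd_k\|^2 - \rho_k\|\bar\vd_{k+1}\|^2\bigr),
\end{equation*}
where $c_1,c_2$ are explicit. The telescoping part of $R_k$ is absorbed by moving half of the $-\frac{n\alpha\rho_k}{4}\|\bar\vd_{k+1}\|^2$ term across. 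We then sum over $k$. Since $\mathscr{H}_{\cK}(K)$ holds, every lemma of Subsections~\ref{subsec:error_analysis}--\ref{subsec:tracking} applies at each index $k\le K-1$. The descent Lemma~\ref{lem:total_descent} needs $\mathscr{H}_{\cK}(k+1)$, and this also holds for $k+1\le K$.

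\textbf{Step 1: consensus errors.} Square \eqref{eq:err_nu_bound} and \eqref{eq:err_x_bound} with Young's inequality to get
\begin{equation*}
\|\vE_{\nu,k+1}\|^2\le\bar\rho\|\vE_{\nu,k}\|^2+\kappa_W\alpha^2\rho_k^2\|\vE_{s,k}\|^2,
\end{equation*}
and similarly for $\vE_x$ with $\vE_{d,k+1}$. Multiply by $M/\rho_k$. The weight changes from $M/\rho_{k-1}$ to $M/\rho_k$, which costs a factor $\rho_{k-1}/\rho_k\le\theta$. We choose $\tau\ge\tau_\star$ so that $\bar\rho\,\rho_{k-1}/\rho_k\le\bar\rho+\sigma$; this is exactly how $\tau_\star$ was defined. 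A contraction margin of order $\sigma M/\rho_k$ then remains. The cross term $M\kappa_W\alpha^2\rho_k\|\vE_{d,k+1}\|^2$ involves index $k+1$. We bound it with \eqref{eq:gc_Ed_compact}, or simply keep it on the tracking side of the ledger.

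\textbf{Step 2: tracking errors.} Multiply \eqref{eq:gc_Es_compact} and \eqref{eq:gc_Ed_compact} by $\beta M\alpha^2\rho_k$. The smallness conditions on $\alpha$ give
\begin{equation*}
9\kappa_W\alpha^2,\ 18\kappa_W\bar L_0^2\alpha^2,\ 6\kappa_W(\cdots)\alpha^2 \le (1-\bar\rho)/8.
\end{equation*}
So the diagonal coefficients stay at most $\bar\rho+(1-\bar\rho)/4$. The choice $\beta=4\kappa_W/(1-\bar\rho)$ lets the leftover $\kappa_W$-cross terms from Step 1 be absorbed with margin $\beta M\alpha^2(1-\bar\rho)\rho_k/8$.

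The tracking bounds produce terms $\beta M\alpha^2 B\rho_k^{-1}(\|\vE_{\nu,k}\|^2+\|\vE_{x,k-1}\|^2)$. These are dominated by the $\sigma M/\rho_k$ margin once $\alpha^2\le\sigma/(8\Gamma B)$ and $\alpha^2\le\sigma/(8\theta\Gamma B)$; the lag $k-1$ costs another factor $\theta$. The terms $M\alpha^4\Gamma B\rho_k|\nabla_\nu h|^2$ are absorbed into $\frac{\alpha\rho_k}{8n}|\nabla_\nu h|^2$ via $\alpha^3\le 1/(8nM\Gamma B)$. The $\|\bar\vd_k\|^2$ terms are lagged. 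They are covered by the previous step's $-\frac{n\alpha\rho_{k-1}}{8}\|\bar\vd_k\|^2$ using the last entry of $\overline\alpha$. The boundary term at $k=0$ produces $\widetilde C\rho_0\|\bar\vd_0\|^2$.

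\textbf{Step 3: descent terms.} In Lemma~\ref{lem:total_descent}, the terms $\frac{\bar L_0^2\alpha}{\rho_k}\|\vE_{\nu,k+1}\|^2$ and $\alpha\rho_k^{-1}(\rho_0L+\bar L_0^2)^2\|\vE_{x,k}\|^2$ are handled by the definition of $M$. They need $M\sigma\ge 16(\cdot)$, and the index shift is again handled by $\theta$ together with the constraint $\alpha\le M\sigma/(48\theta\bar L_0^2)$. The $6n\bar L_0^2\eta_\nu\eta_{x,k-1}^2\rho_k^{-2}\|\bar\vd_k\|^2$ term is of size $6n\bar L_0^2\theta\alpha^3\rho_{k-1}$ and is absorbed by the lagged $\bar\vd$ margin.

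\textbf{Step 4: remainders.} The drift terms telescope:
\begin{equation*}
\sum_k\frac n2|\rho_{k+1}-\rho_k|\le n\rho_0/2.
\end{equation*}
For the $\delta_k^2$ terms, use $\delta_k\le\frac{1}{3(k-1+\tau)}$ and $\rho_k\le\rho_0$. Then the sums $\sum_k\rho_k\delta_k^2$ are bounded by a constant times $\rho_0$, which yields the constant $2\rho_0(6n\alpha+M\alpha^2\Gamma B)/9$; the $2/9$ comes from $\sum 1/(9j^2)$-type bounds.

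The main obstacle is the bookkeeping in Steps 1--3. The weights $1/\rho_{k-1}$ and $\rho_{k-1}$ are misaligned with the error indices ($\vE_{x,k-1}$, $\vE_{\nu,k+1}$, $\bar\vd_k$). Each shift must be paid for by $\theta$ or by $\tau\ge\tau_\star$, while the contraction margins must not be exhausted. I would first write all coefficients as a linear system in the four error norms and check each entry against one clause of $\overline\alpha$.
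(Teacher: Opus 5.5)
Your proposal follows the paper's proof essentially step for step. You add the total-descent bound, the squared consensus contractions (resolving the $\vE_{d,k+1}$ cross term via \eqref{eq:gc_Ed_compact}) and the combined tracking bound scaled by $\beta M\alpha^2\rho_k$ into a one-step Lyapunov inequality. You then sum it, shifting the lagged $\|\vE_{x,k-1}\|^2/\rho_k$ and $\rho_{k-1}\|\bar\vd_k\|^2$ terms (which yields the boundary term $\widetilde C\rho_0\|\bar\vd_0\|^2$) and using $\sum_k\rho_k\delta_k^2\le 2\rho_0/9$ and $\sum_k|\rho_{k+1}-\rho_k|\le\rho_0$.

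One small correction: $\tau\ge\tau_\star$ gives $(\bar\rho+\sigma)\rho_{k-1}/\rho_k\le 1$, i.e.\ $\bar\rho/\rho_k\le 1/\rho_{k-1}-\sigma/\rho_k$, which is exactly the margin you need. It does not give the stronger $\bar\rho\,\rho_{k-1}/\rho_k\le\bar\rho+\sigma$ that you wrote, and that inequality need not hold at $\tau=\tau_\star$.
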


\begin{proof}
	For $k = 0$, we have $\rho_{-1} / \rho_0 = 1$. 
	For $k \geq 1$, it holds that $\rho_{k-1} / \rho_k \leq \theta$. 
	The choice $\tau \geq \tau_\star$ therefore yields $(\bar{\rho} + \sigma) \rho_{k - 1} / \rho_k \leq 1$, which implies that
	\begin{equation}
		\label{eq:gc_weight_margin}
		\frac{\bar{\rho}}{\rho_k}
		\leq
		\frac{1}{\rho_{k - 1}} - \frac{\sigma}{\rho_k},
	\end{equation}
	for all $k \geq 0$. 
	Substituting \eqref{eq:gc_rho_schedule} into \eqref{eq:total_one_step_descent}, and using $\rho_{k - 1} / \rho_k \leq \theta$, gives
	\begin{equation}
		\label{eq:gc_descent_sched_compact}
		\begin{aligned}
			h_{\rho_{k+1}}(\bar\nu_{k+1},\bar\vx_{k+1})
			-h_{\rho_k}(\bar\nu_k,\bar\vx_k)
			\leq {} &
			-\frac{\alpha\rho_k}{4n}\norm{\nabla_{\vx}h_{\rho_k}(\bar\nu_k,\bar\vx_k)}^2
			-\frac{\alpha\rho_k}{4n}\abs{\nabla_\nu h_{\rho_k}(\bar\nu_k,\bar\vx_k)}^2\\
			&
			-\frac{n\alpha\rho_k}{4}\norm{\bar\vd_{k+1}}^2
			+6n\bar L_0^2\theta \alpha^3\rho_{k-1}\norm{\bar\vd_k}^2\\
			&
			+\frac{\alpha\bar L_0^2}{\rho_k}\norm{\vE_{\nu,k+1}}^2
			+\frac{6\alpha}{\rho_k}\norm{\vE_{\nu,k}}^2\\
			&
			+\frac{\alpha(\rho_0L+\bar L_0^2)^2}{\rho_k}\norm{\vE_{x,k}}^2
			+\frac{6\bar L_0^2\alpha}{\rho_k}\norm{\vE_{x,k-1}}^2\\
			&
			+6n\alpha\rho_k\delta_k^2
			+\frac n2\abs{\rho_{k+1}-\rho_k}.
		\end{aligned}
	\end{equation}
	Using \eqref{eq:err_nu_bound}, \eqref{eq:err_x_bound}, \eqref{eq:gc_M_choice}, and \eqref{eq:gc_weight_margin}, we obtain
	\begin{equation}
		\label{eq:gc_nu_increment_compact}
		\begin{aligned}
			\frac{M}{\rho_k} \norm{\vE_{\nu, k + 1}}^2
			- \frac{M}{\rho_{k - 1}} \norm{\vE_{\nu, k}}^2
			+ \frac{\alpha \bar{L}_0^2}{\rho_k} \norm{\vE_{\nu, k + 1}}^2
			\leq {} & 2 \kappa_W M \alpha^2 \rho_k \norm{\vE_{s, k}}^2 \\
			& - \frac{M \sigma}{2 \rho_k} \norm{\vE_{\nu, k}}^2,
		\end{aligned}
	\end{equation}
	and
	\begin{equation}
		\label{eq:gc_x_increment_raw}
		\frac{M}{\rho_k} \norm{\vE_{x, k + 1}}^2
		- \frac{M}{\rho_{k - 1}} \norm{\vE_{x, k}}^2
		\leq
		- \frac{M \sigma}{\rho_k} \norm{\vE_{x, k}}^2
		+\kappa_W M \alpha^2 \rho_k \norm{\vE_{d, k + 1}}^2.
	\end{equation}
	Substituting \eqref{eq:gc_Ed_compact} into \eqref{eq:gc_x_increment_raw} gives
	\begin{equation}
		\label{eq:gc_x_increment_compact}
		\begin{aligned}
			\frac{M}{\rho_k}\norm{\vE_{x,k+1}}^2
			-\frac{M}{\rho_{k-1}}\norm{\vE_{x,k}}^2
			\leq {} &
			-\frac{M\sigma}{\rho_k}\norm{\vE_{x,k}}^2
			+\kappa_W\bar\rho M\alpha^2\rho_k\norm{\vE_{d,k}}^2\\
			&+18\kappa_W^2\bar L_0^2M\alpha^4\rho_k\norm{\vE_{s,k}}^2\\
			&+6\kappa_W^2(L^2\rho_0^2+3\bar L_0^4\theta^2)M\alpha^4\rho_k\norm{\vE_{d,k}}^2\\
			&+\frac{\kappa_WM\alpha^2B}{\rho_k}
			\dkh{\norm{\vE_{\nu,k}}^2+\norm{\vE_{x,k-1}}^2}\\
			&+\kappa_WM\alpha^4B\rho_k
			\dkh{\norm{\bar\vd_k}^2+\abs{\nabla_\nu h_{\rho_k}(\bar\nu_k,\bar\vx_k)}^2}\\
			&+\kappa_WM\alpha^2B\rho_k\delta_k^2.
		\end{aligned}
	\end{equation}
	For the tracking component, \eqref{eq:gc_Es_compact}, \eqref{eq:gc_Ed_compact}, and \eqref{eq:gc_basic_consts} imply
	\begin{equation*}
		\begin{aligned}
			\norm{\vE_{s,k+1}}^2+\norm{\vE_{d,k+1}}^2
			\leq {} &
			(\bar\rho+\alpha^2B)
			\dkh{\norm{\vE_{s,k}}^2+\norm{\vE_{d,k}}^2}
			+\frac{2B}{\rho_k^2}\norm{\vE_{\nu,k}}^2\\
			&+\frac{2B}{\rho_k^2}\norm{\vE_{x,k-1}}^2
			+2B\alpha^2\norm{\bar\vd_k}^2\\
			&+2B\alpha^2\abs{\nabla_\nu h_{\rho_k}(\bar\nu_k,\bar\vx_k)}^2
			+2B\delta_k^2.
		\end{aligned}
	\end{equation*}
	Multiplying by $\beta M \alpha^2 \rho_k$, subtracting $\beta M \alpha^2 \rho_{k - 1} (\|\vE_{s, k}\|^2 + \|\vE_{d, k}\|^2)$, and using $\rho_k \leq \rho_{k - 1}$ yield
	\begin{equation}
		\label{eq:gc_tracking_increment}
		\begin{aligned}
			&\beta M\alpha^2\rho_k
			\dkh{\norm{\vE_{s,k+1}}^2+\norm{\vE_{d,k+1}}^2}
			-\beta M\alpha^2\rho_{k-1}
			\dkh{\norm{\vE_{s,k}}^2+\norm{\vE_{d,k}}^2}\\
			\leq {} &
			-\beta M\alpha^2\rho_k(1-\bar\rho-\alpha^2B)
			\dkh{\norm{\vE_{s,k}}^2+\norm{\vE_{d,k}}^2}\\
			&+\frac{2\beta M\alpha^2B}{\rho_k}\norm{\vE_{\nu,k}}^2
			+\frac{2\beta M\alpha^2B}{\rho_k}\norm{\vE_{x,k-1}}^2\\
			&+2\beta M\alpha^4B\rho_k
			\dkh{\norm{\bar\vd_k}^2+\abs{\nabla_\nu h_{\rho_k}(\bar\nu_k,\bar\vx_k)}^2}
			+2\beta M\alpha^2B\rho_k\delta_k^2.
		\end{aligned}
	\end{equation}
	Adding \eqref{eq:gc_descent_sched_compact}, \eqref{eq:gc_nu_increment_compact}, \eqref{eq:gc_x_increment_compact}, and \eqref{eq:gc_tracking_increment}, and using the condition $\alpha \leq \overline{\alpha}$, gives
	\begin{equation}
		\label{eq:gc_one_step_energy_compact}
		\begin{aligned}
			\mathfrak{L}_{k + 1} - \mathfrak{L}_k
			\leq {} &
			-\frac{\alpha\rho_k}{4n}\norm{\nabla_{\vx}h_{\rho_k}(\bar\nu_k,\bar\vx_k)}^2
			-\frac{\alpha\rho_k}{8n}\abs{\nabla_\nu h_{\rho_k}(\bar\nu_k,\bar\vx_k)}^2
			- \frac{n\alpha\rho_k}{4} \norm{\bar\vd_{k+1}}^2\\
			&-\frac{M\sigma}{4\rho_k}\norm{\vE_{\nu,k}}^2
			-\frac{\beta M\alpha^2(1-\bar\rho)}{8}\rho_k
			\dkh{\norm{\vE_{s,k}}^2+\norm{\vE_{d,k}}^2}\\
			&+(6\bar L_0^2\alpha+M\alpha^2\Gamma B)
			\frac{\norm{\vE_{x,k-1}}^2}{\rho_k}\\
			&+(6n\bar L_0^2\theta \alpha^3+M\alpha^4\Gamma B)
			\rho_{k-1}\norm{\bar\vd_k}^2\\
			&-\frac{M\sigma}{2\rho_k}\norm{\vE_{x,k}}^2
			+(6n\alpha+M\alpha^2\Gamma B)\rho_k\delta_k^2
			+\frac n2\abs{\rho_{k+1}-\rho_k}.
		\end{aligned}
	\end{equation}
	We now sum \eqref{eq:gc_one_step_energy_compact} over $k$ from $0$ to $K - 1$. 
	Since $\vE_{x, -1} = \vE_{x, 0} = 0$ and $1 / \rho_k \leq \theta / \rho_{k - 1}$, it holds that
	\begin{equation}
		\label{eq:gc_shift_xlag}
		\begin{aligned}
			\sum_{k = 0}^{K - 1} (6\bar{L}_0^2 \alpha + M \alpha^2 \Gamma B) \frac{\norm{\vE_{x,k-1}}^2}{\rho_k}
			\leq {} &
			\theta (6\bar{L}_0^2 \alpha + M \alpha^2 \Gamma B) \sum_{k = 0}^{K - 2} \frac{\norm{\vE_{x, k}}^2}{\rho_k} \\
			\leq {} &
			\frac{M \sigma}{4} \sum_{k = 0}^{K - 2} \frac{\norm{\vE_{x, k}}^2}{\rho_k},
		\end{aligned}
	\end{equation}
	where the last inequality follows from the definition of $\overline{\alpha}$. 
	Hence, this term is absorbed by the negative consensus term in \eqref{eq:gc_one_step_energy_compact}. 
	Likewise, we have
	\begin{equation}
		\label{eq:gc_shift_dlag}
		\sum_{k = 0}^{K - 1}
		(6 n \bar{L}_0^2 \theta \alpha^3 + M \alpha^4 \Gamma B) \rho_{k - 1} \norm{\bar{\vd}_k}^2
		=
		\widetilde{C} \rho_0 \norm{\bar{\vd}_0}^2
		+
		\widetilde{C} \sum_{k = 0}^{K - 2} \rho_k \norm{\bar{\vd}_{k + 1}}^2.
	\end{equation}
	Since $\widetilde{C} \leq n \alpha / 8$, the interior sum is absorbed by the negative averaged-direction term in \eqref{eq:gc_one_step_energy_compact}. 
	For $k \geq 1$, the convexity inequality $(1 + t)^{- 1 / 3} \geq 1 - t / 3$ for $t \geq 0$ gives $\delta_k = 1 - (1 + 1 / (k - 1 + \tau))^{- 1 / 3} \leq 1 / (3 (k - 1 + \tau))$. 
	Since $\delta_0 = 0$ and $\rho_k \leq \rho_0$, we have
	\begin{equation}
		\label{eq:sum-delta}
		\sum_{k = 0}^{K - 1} \rho_k \delta_k^2 
		\leq
		\frac{\rho_0}{9} \sum_{j = 0}^{\infty} \frac{1}{(j + \tau)^2}
		\leq
		\frac{\rho_0}{9} \dkh{ \frac{1}{\tau^2} + \int_0^\infty \frac{dt}{(t + \tau)^2} }
		\leq
		\frac{2 \rho_0}{9}.
	\end{equation}
	Moreover, the monotonicity of $\{\rho_k\}$ gives
	\begin{equation}
		\label{eq:sum-rho}
		\sum_{k = 0}^{K - 1} \abs{\rho_{k + 1} - \rho_k}
		=\rho_0 - \rho_K
		\leq \rho_0.
	\end{equation}
	Summing \eqref{eq:gc_one_step_energy_compact} over $k$ from $0$ to $K - 1$, using \eqref{eq:gc_shift_xlag}, \eqref{eq:gc_shift_dlag}, \eqref{eq:sum-delta}, and \eqref{eq:sum-rho}, and retaining the boundary term at $k = 0$ yields \eqref{eq:gc_finite_horizon}.
\end{proof}

\subsection{Boundedness of iterates}

\label{subsec:localization}

The analysis so far has been carried out conditionally on the localization event $\mathscr{H}_{\cK} (q)$. 
We now remove this requirement by showing that the compact set $\cK$ contains all local iterates and their network average. 
The final admissible stepsize threshold is defined by
\begin{equation*}
	\alpha_\star := \min \hkh{\overline{\alpha}, \frac{1-\rho_W}{2 \rho_0 D}, \frac{1}{\rho_0} \dkh{ (\beta M + 6 n \bar{L}_0^2 \theta + M \Gamma B) P + 2 (6 n + M \Gamma B) / 9 }^{- 1}},
\end{equation*}
where $P := \max \{ \|\vE_{s, 0}\|^2 + \|\vE_{d, 0}\|^2, \|\bar{\vd}_0\|^2 \}$ and $D := \|\vE_{d, 0}\| + 2 \bar{L}_0 \sqrt{n} / (1 - \rho_W) + \bar{L}_0 + 1$ are two constants.

\begin{lemma}
	\label{lem:localization}
	Let Assumptions~\ref{asmp:smooth} and~\ref{asmp:graph} hold. 
	Let the algorithmic parameters satisfy \eqref{eq:gc_rho_schedule} with $0 < \alpha \leq \alpha_\star$ and $\tau \geq \tau_\star$. 
	Then $\mathscr{H}_{\cK} (q)$ holds for every $q \geq 0$. 
\end{lemma}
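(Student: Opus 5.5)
We argue by induction on $q$. For $q = 0$, all local iterates equal $\vx_{\mathrm{init}} = \bar{\vx}_0$. Moreover, $\Phi(\bar{\vx}_0) \leq \Phi_{\rho_0}(\bar{\vx}_0) + \rho_0/2 \leq h_{\rho_0}(\bar\nu_0,\bar{\vx}_0) + \rho_0/2 < \varsigma$ by \eqref{eq:Phi_rho_approx} and \eqref{eq:dual_exact}. Hence $\bar{\vx}_0 \in \cQ \subseteq \cK$.

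For the inductive step, assume $\mathscr{H}_{\cK}(q)$ holds. We must show $\bar{\vx}_{q+1}\in\cK$ and $\vx_{i,q+1}\in\cK$ for all $i$. The difficulty is that Lemma~\ref{lem:gc_finite_horizon} needs $\mathscr{H}_{\cK}(K)$ itself, through Lemma~\ref{lem:primal_half_step_sync}, which uses smoothness along the segment $[\bar{\vx}_q, \bar{\vx}_{q+1}]$. So we first establish a crude a priori one-step bound that places $\bar{\vx}_{q+1}$ and $\vx_{i,q+1}$ in $\cK$, or at least in a convex set where the $\cK$-constants remain valid. Then we apply the Lyapunov estimate to get the sharper statement $\bar{\vx}_{q+1}\in\cQ$.

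\textbf{Crude bounds.} From \eqref{eq:compact_d} and $\mathscr{H}_{\cK}(q)$, every row of $\vG_k$ has norm at most $\bar L_0$ for $k\leq q+1$ (since $\vx_{i,k}\in\cK$ for $k\le q$), so $\|\vG_k\|\leq \sqrt n\bar L_0$. Unrolling \eqref{eq:err_d_bound} gives
\begin{equation*}
\|\vE_{d,k}\|\le \|\vE_{d,0}\|+\frac{2\sqrt n\bar L_0}{1-\rho_W}, \qquad k\le q+1,
\end{equation*}
and also $\|\bar{\vd}_{k}\|\le \bar L_0$. Unrolling \eqref{eq:err_x_bound} with $\eta_{x,k}\le\alpha\rho_0$ then yields $\|\vE_{x,k}\|\le \alpha\rho_0 D/(1-\rho_W)\le 1/2$, using $\alpha\le (1-\rho_W)/(2\rho_0D)$. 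This is exactly where the constant $D$ enters. Similarly, $\|\bar{\vx}_{q+1}-\bar{\vx}_q\|\le\alpha\rho_0\bar L_0\le 1/2$.

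\textbf{Sharp bound.} The crude bounds show $\bar{\vx}_{q+1}$ lies within distance $1$ of $\cU$, provided $\bar{\vx}_q\in\cU$; to make this available we strengthen the induction hypothesis to $\bar{\vx}_k\in\cQ$ for $k\le q$. Then each $\vx_{i,k}$ is within $1/2$ of $\bar{\vx}_k\in\cQ\subseteq\cU$, so $\vx_{i,k}\in\cK$, and $\bar{\vx}_{q+1}\in\cU\subseteq\cK$. Therefore $\mathscr{H}_{\cK}(q+1)$ holds, and Lemma~\ref{lem:gc_finite_horizon} applies with $K=q+1$. Dropping the nonnegative terms gives $h_{\rho_{q+1}}(\bar\nu_{q+1},\bar{\vx}_{q+1})\le\mathfrak{L}_{q+1}\le\mathfrak C$. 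It remains to check $\mathfrak{C}\le h_{\rho_0}(\bar\nu_0,\bar{\vx}_0)+n\rho_0/2+1$:
\begin{itemize}
\item $\vE_{\nu,0}=\vE_{x,0}=0$, so the consensus part of $\mathfrak{L}_0$ vanishes.
\item The tracking part of $\mathfrak{L}_0$ is at most $\beta M\alpha^2\rho_0P$.
\item The remaining pieces of $\mathfrak{C}$ are at most $\alpha\rho_0[(6n\bar L_0^2\theta+M\Gamma B)P+2(6n+M\Gamma B)/9]+n\rho_0/2$, using $\alpha\le1$.
\end{itemize}
By the third term defining $\alpha_\star$, the sum of all extra pieces besides $n\rho_0/2$ is at most $1$. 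Finally,
\begin{equation*}
\Phi(\bar{\vx}_{q+1})\le \Phi_{\rho_{q+1}}(\bar{\vx}_{q+1})+\rho_0/2\le h_{\rho_{q+1}}(\bar\nu_{q+1},\bar{\vx}_{q+1})+\rho_0/2\le\varsigma,
\end{equation*}
so $\bar{\vx}_{q+1}\in\cQ$, closing the strengthened induction.

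The main obstacle is this circularity: descent requires localization at step $q+1$, while localization is what we are trying to prove. It is resolved by the crude one-step estimates above, which rely only on bounded gradients at earlier iterates and on the smallness of $\alpha$ encoded in $\alpha_\star$, before the Lyapunov bound is invoked. A secondary care point is verifying that the Lipschitz constants are used only on segments inside the convex set $\cK$.
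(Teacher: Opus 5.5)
Your proof is correct and follows essentially the same route as the paper: the paper's minimal failing index $T$ is your strengthened induction in contrapositive form, with Lemma~\ref{lem:gc_finite_horizon} giving $\bar{\vx}_K\in\cQ$ whenever $\mathscr{H}_{\cK}(K)$ holds, and the crude estimates $\|\vE_{x,t}\|\le 1/2$ and $\alpha\rho_0 D\le 1/2$ carrying localization one step further. The only cosmetic difference is how the local iterates are placed in $\cK$: the paper uses convexity of $\cU$ and $\|\vd_{i,T}\|\le D$, while you use $\bar{\vx}_{q+1}\in\cU$ together with $\|\vE_{x,q+1}\|\le 1/2$ (at that stage $\bar{\vx}_{q+1}$ is known to lie only in $\cU$, not yet in $\cQ$), and this works equally well.
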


\begin{proof}
	We first record a uniform tracking bound. 
	Suppose that $\mathscr{H}_{\cK} (q - 1)$ holds. 
	According to the definition of $\vG_t$, we have $\|\vG_t\| \leq \sqrt{n} \bar{L}_0$ for $0 \leq t \leq q$, which implies that $\|\vG_{t + 1} - \vG_t\| \leq 2 \sqrt{n} \bar{L}_0$ for $0 \leq t \leq q - 1$.
	Then it follows from \eqref{eq:err_d_bound} that
	\begin{equation}
		\label{eq:localization_D_bound}
		\norm{\vE_{d, t}}
		\leq
		\norm{\vE_{d, 0}} + \frac{2 \sqrt{n} \bar{L}_0}{1 - \rho_W},
		\quad\mbox{and}\quad
		\norm{\vd_{i, t}} \leq D,
	\end{equation}
	where the second estimate uses $\|\bar{\vd}_t\| \leq \bar{L}_0$ and the definition of $D$.

	Note that $\vE_{x, 0} = 0$. 
	As long as the preceding iterates remain in $\cK$, \eqref{eq:err_x_bound} and \eqref{eq:localization_D_bound} further give
	\begin{equation}
		\label{eq:localization_Ex_rough}
		\norm{\vE_{x, t}}
		\leq
		\frac{\alpha \rho_0 D}{1 - \rho_W}
		\leq
		\frac{1}{2},
	\end{equation}
	where the last inequality follows from the condition $\alpha \leq (1 - \rho_W) / (2 \rho_0 D)$.

	Since $\Phi (\bar{\vx}_0) \leq h_{\rho_0} (\bar\nu_0, \bar\vx_0) + \rho_0 / 2 \leq \varsigma$, the level set $\cQ$ defined in \eqref{eq:localization_sets_fixed} contains $\bar{\vx}_0$. 
	The initialization of Algorithm~\ref{alg:defair_pdc} gives $\vx_{i, 0} = \bar{\vx}_0 \in \cQ \subseteq \cK$ for all $i \in \cN$. 
	Thus, $\mathscr{H}_{\cK} (0)$ holds.

	Now we prove the assertion of this lemma by contradiction. 
	Let $T \geq 1$ be the smallest integer for which $\mathscr{H}_{\cK} (T)$ fails. 
	Then $\mathscr{H}_{\cK} (T - 1)$ holds. 
	We claim that $\bar{\vx}_K \in \cQ$ and $\vx_{i, K} \in \cU$ for every $0 \leq K < T$. 
	This is immediate at $K = 0$. 
	For $1 \leq K < T$, Lemma~\ref{lem:gc_finite_horizon} is applicable, which gives
	\begin{equation*}
		\mathfrak{L}_K \leq \mathfrak{C},
		\quad\mbox{and}\quad
		h_{\rho_K} (\bar{\nu}_K, \bar{\vx}_K) \leq \mathfrak{C}.
	\end{equation*}
	Using \eqref{eq:Phi_rho_approx}, $\Phi_{\rho_K} (\bar{\vx}_K) = \min_\nu h_{\rho_K} (\nu, \bar{\vx}_K)$, and $\rho_K \leq \rho_0$, we obtain
	\begin{equation*}
		\Phi (\bar{\vx}_K)
		\leq
		h_{\rho_K} (\bar{\nu}_K, \bar{\vx}_K) + \frac{\rho_K}{2}
		\leq
		\mathfrak{C} + \frac{\rho_0}{2}.
	\end{equation*}
	The definitions of $\mathfrak{C}$, $\mathfrak{L}_0$, and $\varsigma$ imply that
	\begin{equation*}
		\begin{aligned}
			\mathfrak{C} + \frac{\rho_0}{2} - \varsigma
			= {} & \mathfrak{L}_0
			+ \widetilde{C} \rho_0 \norm{\bar{\vd}_0}^2
			+ \frac{2}{9} \rho_0 (6 n \alpha + M \alpha^2 \Gamma B)
			+ \dfrac{(n + 1)\rho_0}{2}
			- \varsigma \\
			= {} & \beta M \alpha^2 \rho_0 \dkh{\norm{\vE_{s, 0}}^2 + \norm{\vE_{d, 0}}^2}
			+ \widetilde{C} \rho_0 \norm{\bar{\vd}_0}^2 \\
			& + \frac{2}{9} \rho_0 (6 n \alpha + M \alpha^2 \Gamma B)
			- 2 \\
			\leq {} & \dkh{ \beta M \alpha^2 \rho_0 + \widetilde{C} \rho_0 } P
			+ \frac{2}{9} \rho_0 (6 n \alpha + M \alpha^2 \Gamma B)
			- 2 \\
			\leq {} & - 1,
		\end{aligned}
	\end{equation*}
	where the last inequality uses the condition $\alpha \leq \alpha_\star$. 
	Consequently, the inclusion $\bar{\vx}_K \in \cQ$ holds since
	\begin{equation*}
		\Phi (\bar{\vx}_K)
		\leq
		\mathfrak{C} + \frac{\rho_0}{2}
		\leq
		\varsigma - 1
		\leq
		\varsigma.
	\end{equation*}
	Combining this inclusion with \eqref{eq:localization_Ex_rough} gives
	\begin{equation*}
		\dist (\vx_{i, K}, \cQ)
		\leq
		\norm{\vx_{i,K} - \bar{\vx}_K}
		\leq
		\norm{\vE_{x, K}}
		\leq
		\frac{1}{2}
		< 1,
	\end{equation*}
	for every $i \in \cN$.
	We obtain $\vx_{i, K} \in \cU$. 
	This proves the claim. 
	In particular, we have $\vx_{i, T - 1} \in \cU$ and $\bar{\vx}_{T - 1} \in \cU$. 
	The convexity of $\cU$ guarantees that $\sum_j w_{ij} \vx_{j, T - 1} \in \cU$. 
	According to the update scheme of $\vx_{i, T}$, we have
	\begin{equation*}
		\dist (\vx_{i, T}, \cU)
		\leq \norm{\vx_{i, T} - \sum_{j \in \cN_i} w_{ij} \vx_{j, T - 1}}
		\leq
		\alpha \rho_{T - 1} \norm{\vd_{i, T}}
		\leq
		\alpha \rho_0 D
		\leq
		\frac{1}{2} < 1,
	\end{equation*}
	where the third inequality follows from \eqref{eq:localization_D_bound} and the fourth inequality holds since $\alpha \leq (1 - \rho_W) / (2 \rho_0 D)$. 
	Hence, $\vx_{i, T} \in \cK$ for every $i \in \cN$. 
	Similarly, combining \eqref{eq:average_updates} with $\|\bar{\vd}_T\| \leq \bar{L}_0 \leq D$ gives
	\begin{equation}
		\label{eq:dist-bar-x}
		\dist (\bar{\vx}_T, \cU)
		\leq
		\norm{\bar{\vx}_T - \bar{\vx}_{T - 1}}
		\leq
		\alpha \rho_{T - 1} \norm{\bar{\vd}_T}
		\leq
		\frac{1}{2}
		\leq 1.
	\end{equation}
	As a result, we have $\bar{\vx}_T \in \cK$. 
	This contradicts the definition of $T$. 
	The proof is completed. 
\end{proof}

\subsection{Global convergence and iteration complexity}

\label{subsec:stationarity_complexity}

Based on the previous preparations, we can establish the global convergence guarantee of Algorithm~\ref{alg:defair_pdc} to a Clarke-stationary point.

\begin{theorem}
	\label{thm:global_convergence}
	Suppose that Assumptions~\ref{asmp:smooth} and~\ref{asmp:graph} hold. 
	Let the algorithmic parameters satisfy \eqref{eq:gc_rho_schedule} with $0 < \alpha \leq \alpha_\star$ and $\tau \geq \tau_\star$. 
	Then the following statements hold. 
	\begin{enumerate}[(i)]
		
		\item The sequence $\{\bar{\vx}_k\}$ has at least one accumulation point. 
		
		\item For every accumulation point $\bar{\vx}^\star$ of $\{\bar{\vx}_k\}$, there exist an increasing sequence of indices $\{k_j\}_{j \geq 1}$ and a vector $\vy^\star \in \Delta_n$ such that $\lim_{j \to \infty} \bar{\vx}_{k_j} = \bar{\vx}^\star$, $\lim_{j \to \infty} \vy_{k_j} = \vy^\star$, and
		\begin{equation}
			\label{eq:weight}
			y^\star_i \dkh{\Phi (\bar{\vx}^\star) - f_i ( \bar{\vx}^\star)} = 0,
		\end{equation}
		for all $i \in \cN$. 
		Here, $y^\star_i$ is the $i$-th entry of $\vy^\star$. 
		
		\item Every accumulation point of $\{\bar{\vx}_k\}$ is a Clarke-stationary point of problem~\eqref{opt:fair-max}. 
	\end{enumerate}
\end{theorem}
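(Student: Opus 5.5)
By Lemma~\ref{lem:localization}, $\mathscr{H}_{\cK}(q)$ holds for every $q$. Hence $\bar{\vx}_k\in\cK$ for all $k$, and $\cK$ is compact, so part~(i) follows directly.

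For (ii) and (iii) I will use Lemma~\ref{lem:gc_finite_horizon}, which now holds for every $K$. I first need $\mathfrak{L}_K$ to be bounded below uniformly in $K$. Since $h_{\rho_K}(\bar\nu_K,\bar\vx_K)\ge\Phi_{\rho_K}(\bar\vx_K)\ge\Phi^\star-\rho_0/2$ and the other terms of $\mathfrak{L}_K$ are nonnegative, this holds. Letting $K\to\infty$ then gives
\begin{equation*}
\sum_{k\ge0}\rho_k\Bigl(\norm{\nabla_{\vx}h_{\rho_k}(\bar\nu_k,\bar\vx_k)}^2+\abs{\nabla_\nu h_{\rho_k}(\bar\nu_k,\bar\vx_k)}^2+\norm{\bar\vd_{k+1}}^2\Bigr)+\sum_{k\ge0}\frac{\norm{\vE_{\nu,k}}^2+\norm{\vE_{x,k}}^2}{\rho_k}<\infty .
\end{equation*}
Because $\rho_k\asymp k^{-1/3}$, we have $\sum_k\rho_k=\infty$. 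Therefore $\liminf_k$ of the bracketed gradient terms is $0$, while $\norm{\vE_{\nu,k}},\norm{\vE_{x,k}}\to0$ along the full sequence (using $\rho_k\le\rho_0$).

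The main obstacle is that $\liminf$ along some subsequence is not enough: part~(iii) is claimed for \emph{every} accumulation point. I would try to upgrade to $\lim_k$ of the stationarity residual $r_k:=\norm{\nabla_{\vx}h_{\rho_k}(\bar\nu_k,\bar\vx_k)}+\bar L_0\abs{\nabla_\nu h_{\rho_k}(\bar\nu_k,\bar\vx_k)}$ by a standard "summable-against-$\rho_k$ plus slow variation" argument. Suppose $r_k\ge2\epsilon$ infinitely often. Between an index with $r_k\le\epsilon$ and the next one with $r_k\ge 2\epsilon$, the iterates must move. However, $\norm{\bar\vx_{k+1}-\bar\vx_k}\le\alpha\rho_k\norm{\bar\vd_{k+1}}$ and $\abs{\bar\nu_{k+1}-\bar\nu_k}\le\alpha\rho_k\abs{\bar s_k}$. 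Combining the Lipschitz constants of Lemma~\ref{lem:h_smooth} (of order $1/\rho_k$) with these step lengths, the residual changes by $O(\alpha(\norm{\bar\vd_{k+1}}+\abs{\bar s_k}))+O(\delta_k)$ per step. The terms $\abs{\bar s_k}$ are controlled through Lemma~\ref{lem:dual_mismatch_stale}. Cauchy--Schwarz with $\sum\rho_k\norm{\bar\vd_{k+1}}^2<\infty$ alone does not obviously close the argument, because the $1/\rho_k$ factors cancel the $\rho_k$ weights.

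If this oscillation argument fails, my fallback is to interpret the claim along the subsequence supplied by (ii). I expect the authors to argue that each accumulation point $\bar\vx^\star$ admits indices $k_j$ with $\bar\vx_{k_j}\to\bar\vx^\star$ and $r_{k_j}\to0$. To get this, I would try to show that $h_{\rho_k}(\bar\nu_k,\bar\vx_k)$ converges; this follows from the Lyapunov inequality being quasi-monotone with summable perturbations. Then $\norm{\bar\vx_{k+1}-\bar\vx_k}\to0$, and infinitely many visits of a small ball around $\bar\vx^\star$ must contain indices with small residual.

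Given such $k_j$, I finish as follows. Pass to a further subsequence with $\vy_{k_j}\to\vy^\star$, which is possible since $\vy_k\in[0,1]^n$. Then:
\begin{enumerate}[(a)]
\item $\abs{1-\mathbf{1}_n\zz\vy_{k_j}}=n\abs{\bar s_{k_j}}\to0$, by Lemma~\ref{lem:dual_mismatch_stale} together with $\abs{\nabla_\nu h}\to0$, $\norm{\vE_{\nu}},\norm{\vE_{x}}\to0$, $\delta_k\to0$, and the bound on $\rho_{k-1}\norm{\bar\vd_k}$. Hence $\vy^\star\in\Delta_n$.
\item For complementarity~\eqref{eq:weight}, suppose $y^\star_i>0$. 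Then $y_{i,k_j}>0$ eventually, so $f_i(\vx_{i,k_j-1})-\nu_{i,k_j}>0$. Arguing as in Lemma~\ref{lem:smoothed_gradient_in_hull}, the near-simplex feasibility forces $\max_l f_l(\bar\vx_{k_j})-f_i(\bar\vx_{k_j})\le\rho_{k_j}+o(1)$, up to consensus errors and the vanishing quantity $\norm{\bar\vx_{k_j}-\bar\vx_{k_j-1}}$. Letting $j\to\infty$ gives $\Phi(\bar\vx^\star)=f_i(\bar\vx^\star)$.
\item For (iii), Lemma~\ref{lem:smoothed_gradient_in_hull} gives $\dist(0,\cG_{\rho_{k_j}}(\bar\vx_{k_j}))\le r_{k_j}\to0$. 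Alternatively, $\sum_i y_{i,k_j}\nabla f_i(\bar\vx_{k_j})\to\sum_i y_i^\star\nabla f_i(\bar\vx^\star)$, using $\norm{\nabla_{\vx}h-n\bar\vd}\to0$ from Lemma~\ref{lem:mismatch}. This limit is $0$, and by (ii) it is a convex combination of active gradients, so $0\in\partial\Phi(\bar\vx^\star)$ by~\eqref{eq:clarke_max_rule}.
\end{enumerate}
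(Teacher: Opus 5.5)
There is a genuine gap. Your outline matches the paper's: localization for (i), Lyapunov summability, a subsequence with vanishing residual, a further subsequence for $\vy_{k_j}$, feasibility via $\bar s_{k_j}$, and complementarity via clipping. Your step (b) is essentially the paper's argument.

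\textbf{First gap: extracting the subsequence.} The main step is producing, for an \emph{arbitrary} accumulation point $\bar{\vx}^\star$, indices $k_j$ with $\bar{\vx}_{k_j}\to\bar{\vx}^\star$ and vanishing residual. What you wrote does not establish it. Convergence of $h_{\rho_k}(\bar\nu_k,\bar{\vx}_k)$ plays no role here. Moreover, $\norm{\bar{\vx}_{k+1}-\bar{\vx}_k}\to0$ together with infinitely many visits does not force a small residual at the visits. For example, with steps of order $k^{-1/6}$ (still vanishing), a pass through the ball takes about $k^{1/6}$ iterations. It therefore contributes only about $k^{-1/6}$ to $\sum\rho_k$, which is summable over sparse passes. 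So, for a sequence with only the properties you invoke, $\sum_k\rho_k r_k^2<\infty$ is compatible with $r_k\geq\epsilon$ at every visit.

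The missing idea is that the steps are of the \emph{same order as the weights}: $\norm{\bar{\vx}_{k+1}-\bar{\vx}_k}\le\alpha\rho_k\norm{\bar{\vd}_{k+1}}\le\alpha D\rho_k$. Let $B_r:=\{\vx\mid\norm{\vx-\bar{\vx}^\star}<r\}$. There are two cases:
\begin{itemize}
\item $\bar{\vx}_k$ eventually stays in $B_r$, and then $\sum_k\rho_k=\infty$ applies directly;
\item there are infinitely many excursions from $B_{r/2}$ to the complement of $B_r$, each costing $\sum_{p_j\le k<q_j}\rho_k\ge r/(2\alpha D)$.
\end{itemize}
Either way $\sum_{k:\,\bar{\vx}_k\in B_r}\rho_k=\infty$. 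This contradicts summability if the residual were bounded below on $B_r$.

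\textbf{Second gap: the quantity you select on is too weak for (a).} Lemma~\ref{lem:dual_mismatch_stale} controls $\abs{\nabla_\nu h_{\rho_k}(\bar\nu_k,\bar{\vx}_k)-n\bar s_k}^2$ through $\rho_k^{-2}(\norm{\vE_{\nu,k}}^2+\bar L_0^2\norm{\vE_{x,k-1}}^2)$ and $\bar L_0^2\alpha^2(\rho_{k-1}/\rho_k)^2\norm{\bar{\vd}_k}^2$, plus $\delta_k^2$.
\begin{itemize}
\item Knowing $\norm{\vE_{\nu,k}},\norm{\vE_{x,k}}\to0$ does not suffice. Summability gives only $(\norm{\vE_{\nu,k}}^2+\norm{\vE_{x,k}}^2)/\rho_k\to0$, and Lemma~\ref{lem:error_contraction} a priori only keeps $\norm{\vE_{\nu,k}}/\rho_k$ and $\norm{\vE_{x,k}}/\rho_k$ bounded.
\item Since $\rho_{k-1}/\rho_k\le\theta$ is merely bounded, you need $\norm{\bar{\vd}_{k_j}}\to0$ itself, not a bound on $\rho_{k-1}\norm{\bar{\vd}_k}$.
\end{itemize}
Without these you cannot conclude $\mathbf{1}_n\zz\vy_{k_j}\to1$. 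Note that (ii) concerns the algorithm's weights $\vy_{k_j}$, not $\vy_{\rho_k}(\bar\nu_k,\bar{\vx}_k)$. The same issue affects your alternative route in (c) through Lemma~\ref{lem:mismatch}.

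\textbf{Repair.} This is what the paper does: run the excursion argument on
\[
Z_k^2:=\norm{\nabla_{\vx}h_{\rho_k}(\bar\nu_k,\bar{\vx}_k)}^2+\abs{\nabla_\nu h_{\rho_k}(\bar\nu_k,\bar{\vx}_k)}^2+\norm{\bar{\vd}_k}^2+\rho_k^{-2}\bigl(\norm{\vE_{\nu,k}}^2+\norm{\vE_{x,k-1}}^2\bigr).
\]
This still satisfies $\sum_k\rho_kZ_k^2<\infty$, because $\sum_k(\norm{\vE_{\nu,k}}^2+\norm{\vE_{x,k-1}}^2)/\rho_k<\infty$.

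Once the subsequence exists, your first route in (c) is valid and even bypasses (ii). It uses Lemma~\ref{lem:smoothed_gradient_in_hull} together with $\cI_{\rho_{k_j}}(\bar{\vx}_{k_j})\subseteq\cI_0(\bar{\vx}^\star)$ for large $j$ and compactness of the convex weights. The paper instead uses $\bar{\vd}_{k_j}\to0$ together with complementarity.
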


\begin{proof}
	According to Lemma~\ref{lem:localization}, the sequence $\{\bar{\vx}_k\}$ is bounded, which has at least one accumulation point. 
	This proves (i). 
	Let $\bar{\vx}^\star$ be an arbitrary accumulation point of $\{\bar{\vx}_k\}$.

	By Lemma~\ref{lem:localization}, $\mathscr{H}_{\cK} (q)$ holds for every $q \ge 0$. 
	Hence, the estimate in Lemma~\ref{lem:gc_finite_horizon} is valid for every $K \geq 1$. 
	For every $\nu \in \R$, $\vx \in \R^m$, and $\rho \leq \rho_0$, we have
	\begin{equation*}
		h_\rho (\nu, \vx)
		\geq
		\min_{\zeta \in \R} h_\rho (\zeta, \vx)
		=
		\Phi_\rho (\vx)
		\geq
		\Phi (\vx) - \frac{\rho}{2}
		\geq
		\Phi^\star - \frac{\rho_0}{2}.
	\end{equation*}
	Thus, the sequence $\{\mathfrak{L}_k\}$ is bounded below. 
	Let
	\begin{equation}
		\label{eq:def-rk}
		R_k
		:= 
		\dkh{
			\norm{\nabla_{\vx} h_{\rho_k} (\bar{\nu}_k, \bar{\vx}_k)}^2
			+ \abs{\nabla_\nu h_{\rho_k} (\bar{\nu}_k, \bar{\vx}_k)}^2
		}^{1/2}.
	\end{equation}
	Upon taking $K\to\infty$ in \eqref{eq:gc_finite_horizon}, we arrive at
	\begin{equation}
		\label{eq:gc_sum_gradx}
		\sum_{k = 0}^{\infty} \rho_k R_k^2 < \infty,
		\quad
		\sum_{k = 0}^{\infty} \rho_k \norm{\bar{\vd}_k}^2 < \infty,
		\quad\mbox{and}\quad
		\sum_{k = 0}^{\infty} \dfrac{\norm{\vE_{\nu, k}}^2 + \norm{\vE_{x, k - 1}}^2}{\rho_k} < \infty.
	\end{equation}
	For all $k \geq 1$, we define
	\begin{equation*}
		Z_k := \dkh{ R_k^2 + \norm{\bar{\vd}_k}^2 + \dfrac{\norm{\vE_{\nu, k}}^2 + \norm{\vE_{x, k - 1}}^2}{\rho_k^2} }^{1/2}.
	\end{equation*}
	Then it follows from the relationship \eqref{eq:gc_sum_gradx} that
	\begin{equation}
		\label{eq:sum-rho-z}
		\sum_{k = 1}^{\infty} \rho_k Z_k^2 < \infty.
	\end{equation}
	We claim that for every $r > 0$, every $\epsilon > 0$, and every integer $N$, there exists $k \geq N$ such that
	\begin{equation}
		\label{eq:barxk-zk}
		\norm{\bar{\vx}_k - \bar{\vx}^\star} < r,
		\quad\mbox{and}\quad
		Z_k < \epsilon.
	\end{equation}
	Suppose, to the contrary, that for some $r$, $\epsilon$, and $N$ one has $Z_k \geq \epsilon$ whenever $k \geq N$ and $\norm{\bar{\vx}_k - \bar{\vx}^\star} < r$. 
	Since $\bar{\vx}^\star$ is an accumulation point, $\{\bar{\vx}_k\}$ enters the ball $\{\vx \in \R^m \mid \norm{\vx - \bar{\vx}^\star} \leq r / 2\}$ infinitely often. 
	
	If there exists an integer $N_0$ such that $\norm{\bar{\vx}_k - \bar{\vx}^\star} < r$ for all $k \geq N_0$, we can obtain that
	\begin{equation*}
		\sum_{k = 1}^{\infty} \rho_k Z_k^2
		\geq \sum_{k = \max \{N_0, N\}}^{\infty} \rho_k Z_k^2
		\geq \epsilon^2 \sum_{k = \max \{N_0, N\}}^{\infty} \rho_k
		= \infty,
	\end{equation*}
	which contradicts \eqref{eq:sum-rho-z}. 
	Otherwise, one can recursively choose disjoint pairs of indices $N \leq p_1 < q_1 < p_2 < q_2 < \dotsb$ such that
	\begin{equation*}
		\norm{\bar{\vx}_{p_j} - \bar{\vx}^\star} \leq r / 2,
		\quad\mbox{and}\quad
		q_j = \min \hkh{k > p_j \mid \norm{\bar{\vx}_k - \bar{\vx}^\star} \geq r}.
	\end{equation*}
	Then it holds that $\norm{\bar{\vx}_k - \bar{\vx}^\star} < r$ for all $p_j \leq k < q_j$ and $\norm{\bar{\vx}_{q_j} - \bar{\vx}_{p_j}} \geq r / 2$. 
	Since $p_1 \geq N$, the contradiction hypothesis gives $Z_k \geq \epsilon$ for $p_j \leq k < q_j$. 
	Similar to the proof of \eqref{eq:dist-bar-x}, we have
	\begin{equation*}
		\norm{\bar{\vx}_{k + 1} - \bar{\vx}_k}
		\leq
		\alpha \rho_k \norm{\bar{\vd}_{k + 1}}
		\leq
		\alpha D \rho_k,
	\end{equation*}
	for all $k \geq 0$. 
	Then it can be readily verified that
	\begin{equation*}
		\norm{\bar{\vx}_{q_j} - \bar{\vx}_{p_j}}
		\leq \sum_{k = p_j}^{q_j - 1} \norm{\bar{\vx}_{k + 1} - \bar{\vx}_k}
		\leq \alpha D \sum_{k = p_j}^{q_j - 1} \rho_k,
	\end{equation*}
	which together with the inequality $\|\bar{\vx}_{q_j} - \bar{\vx}_{p_j}\| \geq \|\bar{\vx}_{q_j} - \bar{\vx}^\star\| - \|\bar{\vx}_{p_j} - \bar{\vx}^\star\| \geq r / 2$ implies that
	\begin{equation*}
		\sum_{k = p_j}^{q_j - 1} \rho_k \geq \frac{r}{2 \alpha D}.
	\end{equation*}
	A straightforward verification reveals that
	\begin{equation*}
		\sum_{k = 1}^{\infty} \rho_k Z_k^2
		\geq \sum_{j = 1}^{\infty} \sum_{k = p_j}^{q_j - 1} \rho_k Z_k^2
		\geq \epsilon^2 \sum_{j = 1}^{\infty} \sum_{k = p_j}^{q_j - 1} \rho_k
		\geq \sum_{j = 1}^{\infty} \frac{\epsilon^2 r}{2 \alpha D}
		= \infty.
	\end{equation*}
	This leads to a contradiction again. 
	The claim is proved.

	Applying \eqref{eq:barxk-zk} successively with $r = \epsilon = 1 / j$ and with $N$ larger than the previously selected index, we obtain an increasing subsequence $\{k_j\}_{j \geq 1}$ satisfying 
	\begin{equation}
		\label{eq:z-kj}
		\lim_{j \to \infty} \bar{\vx}_{k_j} = \bar{\vx}^\star,
		\quad\mbox{and}\quad
		\lim_{j \to \infty} Z_{k_j} = 0.
	\end{equation}
	Since $\vy_k \in [0, 1]^n$, compactness allows us to pass to a further subsequence, without relabeling,
	such that $\lim_{j \to \infty} \vy_{k_j} = \vy^\star \in [0, 1]^n$. 
	According to the definition of $Z_k$, we have $\lim_{j \to \infty} R_{k_j} = 0$, which further implies that
	\begin{equation*}
		\lim_{j \to \infty} \abs{ \nabla_\nu h_{\rho_{k_j}} (\bar{\nu}_{k_j}, \bar{\vx}_{k_j}) } = 0.
	\end{equation*}
	Moreover, it holds that $\eta_{x, k - 1} / \rho_k = \alpha \rho_{k - 1} / \rho_k \leq \alpha \theta$ and $\delta_k \to 0$ as $k \to \infty$. 
	Then it follows from Lemma~\ref{lem:dual_mismatch_stale} that $\lim_{j \to \infty} |\bar{s}_{k_j}| = 0$. 
	The first identity in \eqref{eq:avg_s_identity} gives 
	\begin{equation}
		\label{eq:simplex}
		\lim_{j \to \infty} \abs{ 1 - \mathbf{1}_n\zz \vy_{k_j} } = 0,
	\end{equation}
	which guarantees that $1 - \mathbf{1}_n\zz \vy^\star = 0$ and $\vy^\star \in \Delta_n$.

	The next thing to do in the proof is to show that \eqref{eq:weight} holds. 
	In view of \eqref{eq:average_updates} and \eqref{eq:z-kj}, we have 
	\begin{equation}
		\label{eq:diff-nu}
		\lim_{j \to \infty} \bar{\vx}_{k_j - 1} = \bar{\vx}^\star,
		\quad
		\lim_{j \to \infty} \vx_{i, k_j - 1} = \bar{\vx}^\star,
		\quad\mbox{and}\quad
		\lim_{j \to \infty} \abs{ \nu_{i, k_j} - \nu_{l, k_j} } = 0,
	\end{equation}
	for all $i, l \in \cN$. 
	Assume, for contradiction, that for some $\hat{i} \in \cN$ one has $y^\star_{\hat{i}} > 0$ and $\Phi (\bar{\vx}^\star) > f_{\hat{i}} (\bar{\vx}^\star)$. 
	We choose $l \in \argmax_{i \in \cN} f_i (\bar{\vx}^\star)$ and define $\mu := f_l (\bar{\vx}^\star) - f_{\hat{i}} (\bar{\vx}^\star) > 0$. 
	Then it follows from \eqref{eq:diff-nu} that
	\begin{equation}
		\label{eq:diff-f-nu}
		\fkh{ f_l (\vx_{l, k_j - 1}) - \nu_{l, k_j} } - \fkh{ f_{\hat{i}} (\vx_{\hat{i}, k_j - 1}) - \nu_{\hat{i}, k_j}} \to \mu,
	\end{equation}
	as $j \to \infty$. 
	Since $\lim_{j \to \infty} y_{\hat{i}, k_j} = y_{\hat{i}}^\star > 0$, there exist $c_0 > 0$ and $j_0 \geq 1$ such that $y_{\hat{i}, k_j} \geq c_0$ for all $j \geq j_0$. 
	By \eqref{eq:compact_y}, we have $f_{\hat{i}} (\vx_{\hat{i}, k_j - 1}) - \nu_{\hat{i}, k_j} > 0$ for all $j \geq j_0$. 
	Combining this relationship with \eqref{eq:diff-f-nu} yields that
	\begin{equation*}
		f_l (\vx_{l, k_j - 1}) - \nu_{l, k_j} \geq \dfrac{\mu}{2},
	\end{equation*}
	for all sufficiently large $j$. 
	Since $\rho_{k_j - 1} \to 0$, we can obtain that
	\begin{equation*}
		\dfrac{ f_l (\vx_{l, k_j - 1}) - \nu_{l, k_j} }{\rho_{k_j - 1}} \geq 1,
	\end{equation*}
	as $j \to \infty$. 
	The update scheme \eqref{eq:compact_y} gives $y_{l, k_j} = 1$. 
	Hence, it holds that $\mathbf{1}_n\zz \vy_{k_j} \geq y_{l, k_j} + y_{\hat{i}, k_j} \geq 1 + c_0$ for all sufficiently large $j$, contradicting \eqref{eq:simplex}. 
	This proves (ii). 
	

	Combining the second identity in \eqref{eq:avg_s_identity} with \eqref{eq:z-kj}, we finally arrive at
	\begin{equation*}
		0 
		= \lim_{j \to \infty} n \bar{\vd}_{k_j}
		= \lim_{j \to \infty} \sum_{i =1}^n y_{i, k_j} \nabla f_i (\vx_{i, k_j - 1})
		= \sum_{i =1}^n y_i^\star \nabla f_i (\bar{\vx}^\star).
	\end{equation*}
	The relationship \eqref{eq:weight} guarantees that $y_i^\star = 0$ for all $i \notin \argmax_{j \in \cN} f_j (\bar{\vx}^\star)$. 
	Since $\vy^\star \in \Delta_n$, we have
	\begin{equation*}
		0 \in
		\operatorname{conv} \hkh{ \nabla f_i (\bar{\vx}^\star) \;\middle|\; i \in \argmax_{j \in \cN} f_j (\bar{\vx}^\star) }
		= \partial \Phi (\bar{\vx}^\star),
	\end{equation*}
	which indicates that $\bar{\vx}^\star$ is a Clarke-stationary point. 
	This proves (iii). 
	We complete the whole proof. 
\end{proof}

Theorem~\ref{thm:global_convergence} not only establishes the asymptotic stationarity of the averaged primal iterates, but also characterizes the limiting behavior of the adaptive weights generated by APDGT. 
Specifically, for any accumulation point of the averaged primal sequence, there exists a subsequence along which the corresponding weights converge to a simplex vector whose nonzero entries are supported precisely on the agents attaining the maximum of local objective values. 
Thus, beyond producing stationary primal iterates, APDGT automatically recovers a worst-agent certificate that identifies the local objective functions governing the finite-max geometry at the limiting point. 

The final task is to derive the iteration complexity of APDGT to reach an $\varepsilon$-stationary point defined in Definition~\ref{def:epsilon_stationary_new}, which is accomplished in the following theorem.

\begin{theorem}
	\label{thm:stationarity_complexity}
	Suppose that all the conditions of Theorem~\ref{thm:global_convergence} hold. 
	Then, for any $\varepsilon > 0$, Algorithm~\ref{alg:defair_pdc} generates an $\varepsilon$-stationary point of problem~\eqref{opt:fair-max} within $O (\varepsilon^{-3})$ iterations. 
\end{theorem}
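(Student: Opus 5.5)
The plan is to combine the summability estimate of Lemma~\ref{lem:gc_finite_horizon}, now valid for every $K$ by Lemma~\ref{lem:localization}, with the certificate of Lemma~\ref{lem:smoothed_gradient_in_hull}. That certificate is stated for the hull $\cG_{\rho_k}$ at $\bar{\vx}_k$. Since $\cI_{\rho}(\vx)\subseteq\cI_{\varepsilon}(\vx)$ whenever $\rho\le\varepsilon$, we have $\cG_{\rho}(\vx)\subseteq\cG_\varepsilon(\vx)$. It therefore suffices to find an index $k\le K$ with two properties. First, $\rho_k\le\varepsilon$. Second,
\begin{equation*}
\norm{\nabla_{\vx} h_{\rho_k}(\bar\nu_k,\bar\vx_k)}+\bar L_0\abs{\nabla_\nu h_{\rho_k}(\bar\nu_k,\bar\vx_k)}\le\varepsilon .
\end{equation*}
Indeed, Lemma~\ref{lem:smoothed_gradient_in_hull}, applied at $\bar{\vx}_k\in\cK$ with $\nu=\bar\nu_k$, then gives $\dist(0,\cG_\varepsilon(\bar\vx_k))\le\dist(0,\cG_{\rho_k}(\bar\vx_k))\le\varepsilon$. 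The left-hand side of the display is at most $\sqrt{1+\bar L_0^2}\,R_k$, with $R_k$ as in \eqref{eq:def-rk}.

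For the first step, I take $K\to\infty$ in \eqref{eq:gc_finite_horizon}. Since $\mathfrak{L}_K\ge\Phi^\star-\rho_0/2$, this yields
\begin{equation*}
\sum_{k=0}^{K-1}\rho_k R_k^2\le \frac{8n}{\alpha}\dkh{\mathfrak{C}-\Phi^\star+\rho_0/2}=:C_1 ,
\end{equation*}
a constant independent of $K$ and $\varepsilon$. Next I restrict the sum to the second half of the window, $\lceil K/2\rceil\le k\le K-1$. On this range $\rho_k\ge\rho_{K}$, since $\rho_k$ is decreasing. Hence
\begin{equation*}
\min_{K/2\le k<K}R_k^2\le \frac{2C_1}{K\rho_K}.
\end{equation*}
The schedule \eqref{eq:gc_rho_schedule} gives $\rho_K\asymp\rho_0\tau^{1/3}K^{-1/3}$, so $K\rho_K\asymp K^{2/3}$ and $\min R_k\lesssim K^{-1/3}$. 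At the same time, every such $k$ satisfies $\rho_k\le\rho_{\lceil K/2\rceil}\lesssim K^{-1/3}$.

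Both requirements, $\sqrt{1+\bar L_0^2}\,R_k\le\varepsilon$ and $\rho_k\le\varepsilon$, then hold once $K\ge c\,\varepsilon^{-3}$. Here $c$ is explicit in $C_1$, $\rho_0$, $\tau$ and $\bar L_0$. Concretely, I will take $K$ to be the maximum of $2\tau(\rho_0/\varepsilon)^3$, which forces $\rho_{\lceil K/2\rceil}\le\varepsilon$, and a constant multiple of $C_1^{3/2}(1+\bar L_0^2)^{3/2}\varepsilon^{-3}$. This gives the $O(\varepsilon^{-3})$ bound, and the $\varepsilon$-stationary point is $\bar\vx_k$ for the minimizing index.

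The main point requiring care is that the two errors must be controlled simultaneously. Restricting to the second half-window is what ensures $\rho_k$ is small. One also has to check that $C_1$ does not depend on $K$; it does not, since $\mathfrak{C}$ involves only initial quantities and $\alpha,\rho_0$. Finally, the near-active set is taken at $\bar\vx_k$ itself, so no consensus error enters the certificate.
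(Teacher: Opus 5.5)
Your proposal is correct and follows essentially the same route as the paper. You bound $\sum_k\rho_k R_k^2$ uniformly via Lemma~\ref{lem:gc_finite_horizon} and the lower bound on $\mathfrak{L}_K$, then take the minimizing index over the half-window $\lceil K/2\rceil\le k\le K-1$ (where the minimizer has $R_k\lesssim K^{-1/3}$ and every index has $\rho_k\lesssim K^{-1/3}$), and finally pass from $\cG_{\rho_k}(\bar\vx_k)$ to $\cG_\varepsilon(\bar\vx_k)$ through Lemma~\ref{lem:smoothed_gradient_in_hull}. The only slip is cosmetic and changes constants only: the half-window has $\lfloor K/2\rfloor$ indices and is empty for $K=1$, so your bound $2C_1/(K\rho_K)$ should read $C_1/(\lfloor K/2\rfloor\rho_K)\le 3C_1/(K\rho_K)$ for $K\ge 2$.
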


\begin{proof}
	For each integer $K \geq 2$, we define $\cT_K := \{ k \in \N \mid \lceil K / 2 \rceil \leq k \leq K - 1\}$. 
	Recall that the notation $R_k$ is defined in \eqref{eq:def-rk}. 
	By Lemma~\ref{lem:gc_finite_horizon} and the lower boundedness of $\{\mathfrak{L}_k\}$, there exists a constant $C_{\rm it} > 0$, independent of $K$, such that
	\begin{equation*}
		\frac{\alpha}{8 n} \sum_{k = \lceil K / 2 \rceil}^{K - 1} \rho_k R_k^2
		\leq
		C_{\rm it},
	\end{equation*}
	which implies that
	\begin{equation*}
		\min_{k \in \cT_K} R_k^2
		\leq
		\frac{8 n C_{\rm it}}{\alpha \sum_{k = \lceil K / 2 \rceil}^{K - 1} \rho_k}.
	\end{equation*}
	Since $\tau \geq 1$, we have $K - 1 + \tau \leq \tau K$ and $\lfloor K / 2 \rfloor \geq K / 3$ for $K \geq 2$. 
	Then it can be readily verified that
	\begin{equation*}
		\sum_{k=\lceil K / 2 \rceil}^{K - 1} \rho_k
		\geq
		\left\lfloor \frac{K}{2} \right\rfloor \rho_{K - 1}
		\geq
		\frac{\rho_0}{3} K^{2/3}.
	\end{equation*}
	Consequently, we have
	\begin{equation}
		\label{eq:gc_complexity_rate}
		\min_{k \in \cT_K} R_k^2
		\leq
		C_{\rm res} K^{-2/3},
	\end{equation}
	where $C_{\rm res} := 24 n C_{\rm it} / (\alpha \rho_0)$. 
	Let $k^\star \in \cT_K$ attain the minimum in \eqref{eq:gc_complexity_rate}. 
	Since $k^\star \geq K / 2$, it holds that
	\begin{equation*}
		R_{k^\star}^2 \leq C_{\rm res} K^{-2/3},
		\quad\mbox{and}\quad
		\rho_{k^\star}
		= \rho_0 \dkh{ \frac{\tau}{k^\star + \tau} }^{1 / 3}
		\leq (2 \tau)^{1 / 3}\rho_0 K^{-1/3}.
	\end{equation*}
	It follows from Lemma~\ref{lem:smoothed_gradient_in_hull} that
	\begin{equation*}
		\begin{aligned}
			\dist \dkh{ 0, \cG_{\rho_{k^\star}} (\bar{\vx}_{k^\star}) }
			\leq {} &
			\norm{\nabla_{\vx} h_{\rho_{k^\star}} (\bar{\nu}_{k^\star}, \bar{\vx}_{k^\star})}
			+\bar{L}_0 \abs{\nabla_\nu h_{\rho_{k^\star}} (\bar{\nu}_{k^\star}, \bar{\vx}_{k^\star})} \\
			\leq {} &
			\sqrt{1+\bar{L}_0^2} R_{k^\star}
			\leq
			\sqrt{(1+\bar{L}_0^2) C_{\rm res}} K^{-1/3}.
		\end{aligned}
	\end{equation*}
	Now we choose
	\begin{equation*}
		K
		\geq
		\max \hkh{
			2,
			\left\lceil \dkh{ (1 + \bar{L}_0^2) C_{\rm res} }^{3/2} \varepsilon^{-3} \right\rceil,
			\left\lceil 2 \tau \rho_0^3 \varepsilon^{-3} \right\rceil
		}.
	\end{equation*}
	Then it is clear that $\rho_{k^\star}\leq\varepsilon$ and $\dist \dkh{ 0, \cG_{\rho_{k^\star}} (\bar{\vx}_{k^\star}) } \leq \varepsilon$.
	Since $\cG_{\rho_{k^\star}} (\bar{\vx}_{k^\star}) \subseteq \cG_\varepsilon (\bar{\vx}_{k^\star})$, we arrive at
	\begin{equation*}
		\dist \dkh{ 0, \cG_\varepsilon (\bar{\vx}_{k^\star}) } \leq \varepsilon,
	\end{equation*}
	which indicates that $\bar{\vx}_{k^\star}$ is an $\varepsilon$-stationary point. 
	The proof is completed. 
\end{proof}

The $O (\varepsilon^{-3})$ iteration complexity of APDGT obtained above matches classical results for nonconvex nonsmooth optimization \cite{bian2013worst,wang2026adaptive,wang2023smoothing} and for nonconvex minimax optimization \cite{thekumprampil2019efficient,zhao2024primal}. 

\section{Numerical experiments}

\label{sec:numerics}

Preliminary numerical results are presented in this section to provide additional insights into the performance guarantees of the finite-max model \eqref{opt:fair-max} and the corresponding algorithm APDGT. 
All the experiments are implemented in Python and run on a MacBook Pro with an Apple M2 Pro chip and 16 GB of memory.

\subsection{Performance of APDGT}

\label{subsec:toy_example}

We first validate the effectiveness of the proposed algorithm on problem~\eqref{opt:fair-max}. 
For our testing, the local function of agent $i \in \cN$ is generated by
\begin{equation*}
	f_i (\vx)
	= 2 - \cos (\va_i\zz \vx + \gamma_i) - \sin (\vc_i\zz \vx + \xi_i) + \frac{1}{2} \norm{\vx}^2,
\end{equation*}
with $\vx \in \R^m$. 
The entries of $\va_i$ and $\vc_i$, together with $\gamma_i$ and $\xi_i$, are sampled independently from the uniform distribution on $[-1, 1]$. 
It is evident that the local functions constructed in this manner are inherently nonconvex. 
We set $m = 20$ and $n = 100$. 
Moreover, the parameters of Algorithm~\ref{alg:defair_pdc} are chosen as $\rho_k = 8 (k + 1)^{- 1 / 3}$, $\eta_{x, k} = \rho_k / 40$, and $\eta_{\nu, k} = \rho_k / 40$. 
Three distinct network topologies are evaluated in this experiment, including a ring network, an Erdos-Renyi network, and a $10 \times 10$ grid network. 
For the Erdos-Renyi network, each potential edge is independently generated with a probability of $0.05$. 
The initial point is generated at random. 

To assess the behavior of APDGT, we report two performance metrics. 
The first metric is the regularized stationarity residual $R_k$ defined in \eqref{eq:def-rk}. 
Lemma~\ref{lem:smoothed_gradient_in_hull} illustrates that $R_k$ can be interpreted as a quantitative measure of the stationarity violation for problem~\eqref{opt:fair-max}. 
The second metric is the combined consensus error
\begin{equation*}
	E_k
	:=
	\dkh{
		\norm{\vX_k - \mathbf{1}_n \bar{\vx}_k\zz}^2
		+
		\norm{\boldsymbol{\nu}_k - \mathbf{1}_n \bar{\nu}_k}^2
	}^{1 / 2},
\end{equation*}
which characterizes the extent to which consensus is achieved across the network.

Figure~\ref{fig:network} depicts, in its two subplots, the decay of two performance metrics over the course of iterations under the three different network topologies. 
As can be observed, the convergence behavior of the consensus error exhibits a clear dependence on the underlying network topology. 
The better connected the network, the faster the consensus error decays. 
By contrast, once the consensus error has been reduced to a sufficiently small level, the curves of the stationarity residual corresponding to three networks become nearly indistinguishable. 
This phenomenon can be attributed to the fact that $R_k$ is evaluated at the network-average iterate $(\bar{\nu}_k, \bar{\vx}_k)$.

\begin{figure}[ht]
	\centering
	
	\subfloat[regularized stationarity residual]{
		\label{subfig:stationarity}
		\includegraphics[width=0.45\linewidth]{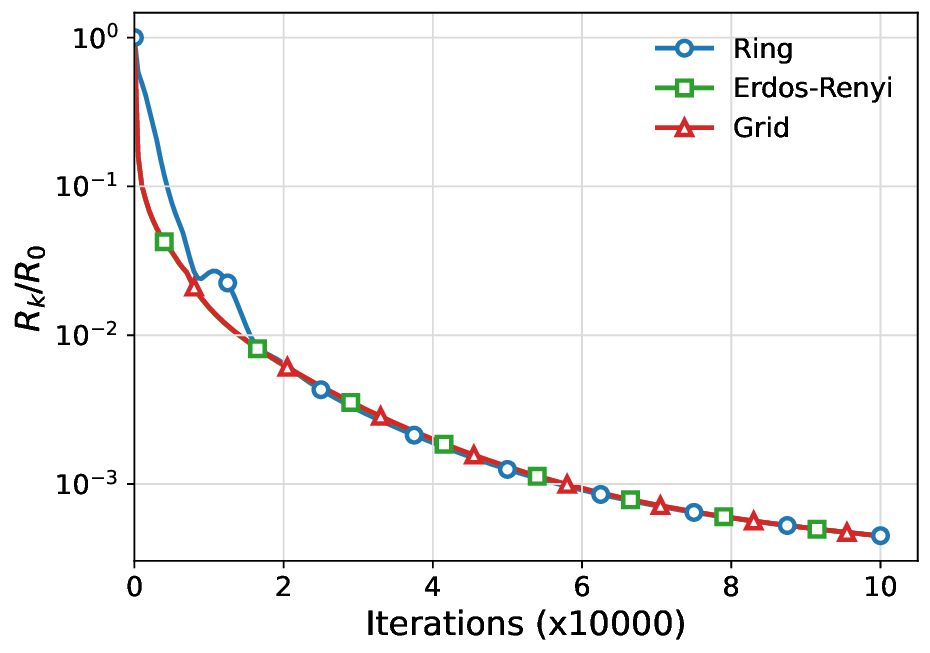}
	}
	\subfloat[combined consensus error]{
		\label{subfig:consensus}
		\includegraphics[width=0.45\linewidth]{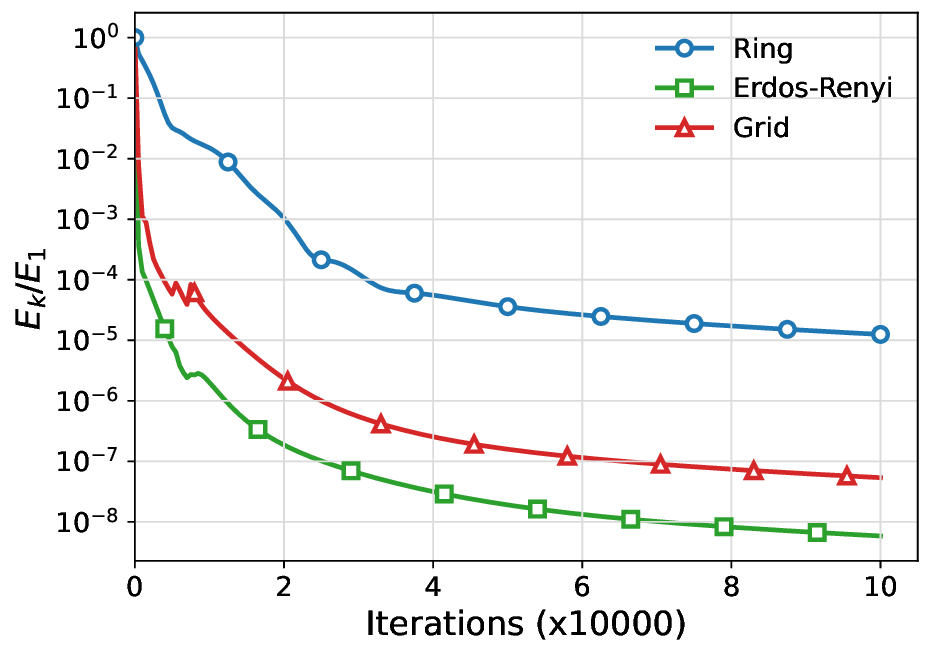}
	}
	
	\caption{Convergence behavior of APDGT in different networks.}
	\label{fig:network}
\end{figure}

We then engage in a comparative evaluation of APDGT and several existing algorithms when local functions are convex. 
Specifically, the local function of agent $i$ is given by
\begin{equation*}
	f_i (\vx)
	= \sqrt{1 + \norm{\vx - \va_i}^2} + \vc_i,
\end{equation*}
with $\vx \in \R^m$. 
The entries of $\va_i$ and $\vc_i$ are independently sampled from the uniform distribution on $[-1, 1]$.
We consider two groups of benchmark algorithms for comparison. 
The first group is designed to solve the penalty model~\eqref{opt:penalty}. 
In particular, we employ the decentralized (sub)gradient descent (DGD) algorithm and the decentralized (sub)gradient tracking (DGT) algorithm, which are referred to as DGD-PM and DGT-PM, respectively. 
The second group applies the DGT algorithm to the $q$FFL model~\eqref{opt:qffl}, and the resulting method is denoted by DGT-$q$FFL. 
We consider two representative values of $q$ to test, including $q = 2$ and $q = 5$.

In this experiment, we set $m = 20$ and conduct all simulations over a ring network of $n = 20$ agents. 
The algorithmic parameters of APDGT are chosen as $\rho_k = 0.015 (k + 1)^{-1/3}$, $\eta_{x, k} = 0.95 \rho_k$, and $\eta_{\nu, k} = 0.95 \rho_k$. 
The penalty parameter in \eqref{opt:penalty} is set to $4.5$. 
We adopt a diminishing stepsize of $(k + 1)^{-0.65}$ for DGD-PM, whereas DGT-PM is implemented with a fixed stepsize of $0.012$. 
For DGT-$q$FFL, the stepsizes are set to $5 \times 10^{-4}$ and $10^{-4}$ for $q = 2$ and $q = 5$, respectively. 
These parameter choices are selected based on empirical tuning and correspond to the best-performing configurations observed in our experiments. 
All algorithms are initialized from the same randomly generated starting point.

Figure~\ref{fig:comm} illustrates how the finite-max objective value $\Phi (\bar{\vx}_k) = \max_{i \in \cN} f_i (\bar{\vx}_k)$ attained by each algorithm decreases with the cumulative communication volume. 
Among all the methods under comparison, APDGT achieves the lowest final objective value. 
For DGT-$q$FFL, increasing the value of $q$ reduces the approximation error inherent in \eqref{opt:qffl}, thereby yielding a lower final objective value. 
Nevertheless, the resulting value remains higher than that attained by APDGT. 
Although DGD-PM and DGT-PM exhibit a relatively rapid decrease during the early stage, their convergence becomes noticeably slower as the iterations proceed. 
Moreover, the objective values to which they eventually converge are also inferior to that of APDGT. 
These observations provide compelling numerical evidence for the advantage of APDGT in solving problem~\eqref{opt:fair-max}.

\begin{figure}[ht]
	\centering
	
	\includegraphics[width=0.45\linewidth]{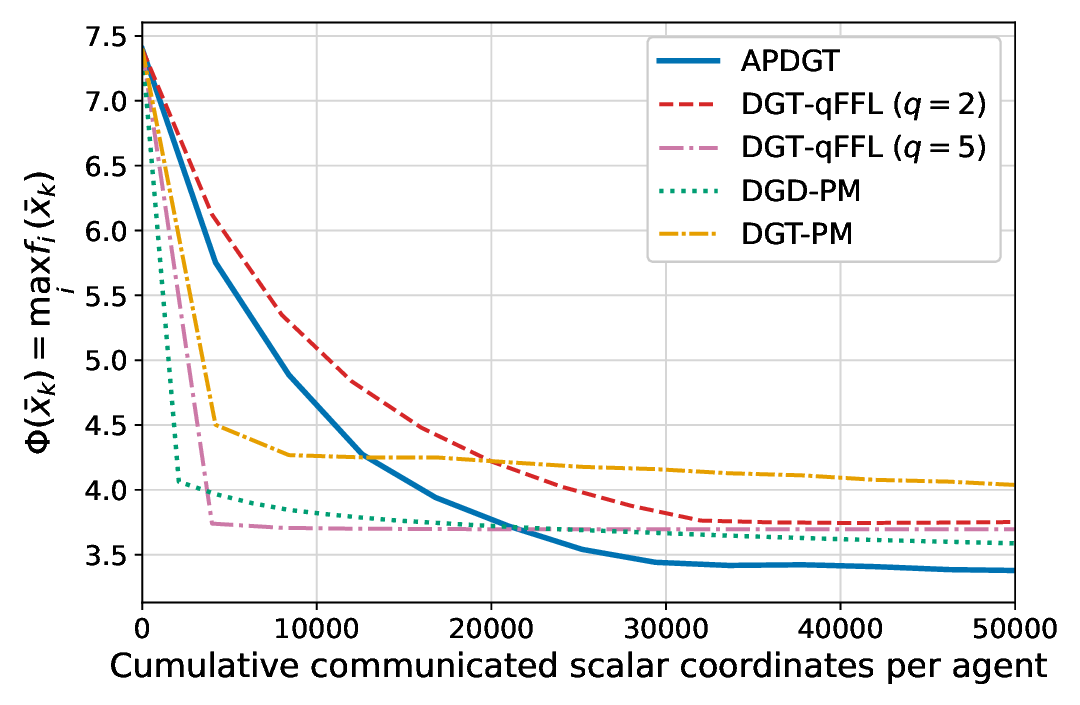}
	
	\caption{Numerical comparison of different algorithms on problem~\eqref{opt:fair-max}.}
	\label{fig:comm}
\end{figure}

\subsection{Performance of finite-max model}

\label{subsec:fashionmnist}

The next series of numerical experiments is designed to investigate the effectiveness of the finite-max model~\eqref{opt:fair-max} in promoting fairness for decentralized optimization. 
Specifically, we consider a logistic regression problem on the Fashion-MNIST dataset \cite{xiao2017fashion}. 
Fashion-MNIST consists of $70000$ samples from $10$ distinct classes, each represented as a $28 \times 28$ grayscale image. 
We vectorize each image into a 784-dimensional feature vector.

The samples of Fashion-MNIST are pooled and then partitioned across $n$ agents. 
We follow the equal-access and single-access split constructions used in fair federated learning experiments \cite{papadaki2022minimax}. 
Three sample-split regimes are considered, including ESG, KSG-3, and SSG. 
Under ESG, every agent receives samples from all $10$ classes with approximately balanced proportions. 
Under SSG, agent $i$ receives samples from the single class indexed by $i \bmod 10$. 
Under KSG-3, agent $i$ receives samples from three classes $\{i \bmod 10, (i + 1) \bmod 10, (i + 2) \bmod 10\}$, and a random permutation of the $10$ classes is applied before assigning samples. 
After this procedure, we split the local dataset into $70\%$ training data $\cD_i^{\rm tr}$ and $30\%$ test data $\cD_i^{\rm te}$ for each agent $i$.


In the decentralized logistic regression problem, we denote $\Theta = [\Theta_0, \dotsc, \Theta_9]\zz \in \R^{10 \times 784}$ and $\boldsymbol{\beta} = (\beta_0, \dotsc, \beta_9)\zz \in \R^{10}$, where $\Theta_j \in \R^{784}$ is the coefficient vector for class $j \in \{0, \dotsc, 9\}$. 
For a vectorized image $u \in \R^{784}$ with class label $v \in \{0, \dotsc, 9\}$, the loss function is
\begin{equation*}
	\ell (\vx; u, v)
	= - \log \dkh{ \frac{\exp (\Theta_v\zz u + \beta_v)}{\sum_{j = 0}^{9} \exp(\Theta_j\zz u + \beta_j)} },
\end{equation*}
where $\vx = (\operatorname{vec} (\Theta)\zz, \boldsymbol{\beta}\zz)\zz$. 
In the training process, the local function of agent $i$ is
\begin{equation*}
	f_i (\vx)
	:=
	\frac{1}{|\cD_i^{\rm tr}|}
	\sum_{(u, v) \in \cD_i^{\rm tr}}
	\ell (\vx; u, v)
	+
	\frac{\mu}{2} \|\vx\|^2,
\end{equation*}
where $\mu = 10^{-4}$ is a ridge regularization parameter and $\|\vx\|^2 = \|\Theta\|^2 + \|\boldsymbol{\beta}\|^2$.

We benchmark the finite-max formulation in \eqref{opt:fair-max} against two alternative modeling paradigms. 
The first is the finite-sum model~\eqref{opt:average}, which is solved using the DGT algorithm and denoted by DGT-average. 
The second is the $q$FFL model~\eqref{opt:qffl}, for which we adopt the DGT-$q$FFL method considered in the preceding experiments. 
As before, we examine two values of $q$, including $q = 2$ and $q = 5$. 
The finite-max model~\eqref{opt:fair-max} is solved by the proposed APDGT algorithm. 
The algorithmic parameters of all competing methods are tuned via grid search, with the best-performing configuration retained for each method. 
All algorithms are run for a common fixed budget of $10000$ iterations. 
This experimental setup ensures that the communication costs incurred by different algorithms are nearly identical.

To evaluate the quality of the solutions delivered by different models, we consider three fairness-aware performance metrics. 
The first metric is the following worst-agent loss on the test data,
\begin{equation}
	\label{eq:fmnist_worst_loss}
	L_{\max}^{\rm te} (\vx)
	:=
	\max_{i \in \cN} \, \ell_i^{\rm te} (\vx),
\end{equation}
where $\ell_i^{\rm te}$ is the test loss on agent $i$ defined by
\begin{equation*}
	\ell_i^{\rm te} (\vx)
	=
	\frac{1}{|\cD_i^{\rm te}|}
	\sum_{(u, v) \in \cD_i^{\rm te}}
	\ell (\vx; u, v).
\end{equation*}
The second metric is the CVaR \cite{rockafellar2002conditional} loss evaluated on the test set as follows,
\begin{equation}
\label{eq:fmnist_cvar_loss}
L_{\rm cvar}^{\rm te} (\vx)
:=
\frac{1}{|\cP_{0.9} (\vx)|}
\sum_{i \in \cP_{0.9} (\vx)}
\ell_i^{\rm te} (\vx),
\end{equation}
where $\cP_{0.9} (\vx)$ indexes the $\lceil 0.1n\rceil$ agents with the largest test losses. 
Thus, $L_{\rm cvar}^{\rm te}$ averages the worst $10\%$ of agents and is less sensitive to a single outlier than \eqref{eq:fmnist_worst_loss}. 
The third metric is the following worst-agent classification error on the test data,
\begin{equation*}
\operatorname{Err}_{\max}^{\rm te} (\vx)
:=
\max_{i \in \cN} \dkh{ 1 - \operatorname{Acc}_i^{\rm te} (\vx) },
\end{equation*}
where the classification accuracy of agent $i$ is given by
\begin{equation*}
\operatorname{Acc}_i^{\rm te} (\vx)
:=
\frac{1}{|\cD_i^{\rm te}|}
\abs{ \hkh{
		(u, v) \in \cD_i^{\rm te}
		\;\middle|\;
		v = \argmax_{j \in \{0, \dotsc, 9\}} \dkh{ \Theta_j\zz u + \beta_j }
} }.
\end{equation*}
For each of the subsequent experiments, we conduct three independent trials.

We begin by comparing the fairness performance of the three models under the most severe data-heterogeneity setting, namely SSG. 
Table~\ref{tab:ssg} reports the corresponding results on a ring network for different numbers of agents, where lower values indicate better performance. 
As evidenced therein, APDGT consistently achieves the best results across all three fairness metrics, demonstrating that the finite-max model~\eqref{opt:fair-max} can substantially enhance fairness among the agents.

\begin{table}[ht]
\caption{Fairness metrics of different algorithms on Fashion-MNIST under SSG.}
\label{tab:ssg}
\centering
\small
\renewcommand{\arraystretch}{1.10}
\begin{tabular*}{\textwidth}{@{\hspace{\tabcolsep}\extracolsep{\fill}}llccc@{\hspace{\tabcolsep}}}
	\hline
	$n$ & Method & CVaR$_{0.9}$ loss & Worst-agent loss & Worst-agent error \\
	\hline
	$20$ & DGT-average & $1.031\pm0.027$ & $1.047\pm0.034$ & $0.413\pm0.008$ \\
	$20$ & DGT-$q$FFL ($q=2$) & $1.004\pm0.043$ & $1.022\pm0.055$ & $0.389\pm0.015$ \\
	$20$ & DGT-$q$FFL ($q=5$) & $0.803\pm0.020$ & $0.813\pm0.020$ & $0.314\pm0.004$ \\
	$20$ & APDGT & $0.765\pm0.023$ & $0.774\pm0.022$ & $0.294\pm0.006$ \\
	\hline
	$50$ & DGT-average & $1.046\pm0.035$ & $1.116\pm0.039$ & $0.435\pm0.011$ \\
	$50$ & DGT-$q$FFL ($q=2$) & $1.032\pm0.045$ & $1.094\pm0.058$ & $0.418\pm0.005$ \\
	$50$ & DGT-$q$FFL ($q=5$) & $0.825\pm0.017$ & $0.877\pm0.028$ & $0.338\pm0.005$ \\
	$50$ & APDGT & $0.788\pm0.017$ & $0.826\pm0.028$ & $0.317\pm0.008$ \\
	\hline
	$100$ & DGT-average & $1.061\pm0.031$ & $1.192\pm0.007$ & $0.463\pm0.003$ \\
	$100$ & DGT-$q$FFL ($q=2$) & $1.039\pm0.032$ & $1.180\pm0.022$ & $0.444\pm0.003$ \\
	$100$ & DGT-$q$FFL ($q=5$) & $0.849\pm0.024$ & $0.942\pm0.034$ & $0.371\pm0.017$ \\
	$100$ & APDGT & $0.814\pm0.013$ & $0.886\pm0.032$ & $0.362\pm0.010$ \\
	\hline
\end{tabular*}
\end{table}

Figure~\ref{fig:ssg} further gives a distributional view for the test losses of different agents when $n = 50$. 
The horizontal axis represents the test-loss bins, while the vertical axis indicates the average number of agents whose test losses fall into each bin. 
DGT-average and DGT-$q$FFL with $q = 2$ place a substantial proportion of agents in relatively low-loss bins. 
Nevertheless, a non-negligible fraction of agents remains in the $0.9$--$1.2$ range. 
Increasing $q$ to 5 alleviates this issue to some extent, but the resulting distribution is still less favorable than that achieved by APDGT. 
By comparison, APDGT concentrates the test losses of all agents within a relatively narrow and moderate-loss range, which completely eliminates the high-loss tail above $0.9$. 
DGT-$q$FFL with $q = 5$ still leaves a small fraction of agents with test losses exceeding $0.9$. 
This more balanced and tightly concentrated loss distribution accounts for the lower CVaR and worst-agent losses attained by APDGT. 

\begin{figure}[ht]
\centering

\subfloat[APDGT]{
	\label{subfig:apdgt}
	\includegraphics[width=0.45\linewidth]{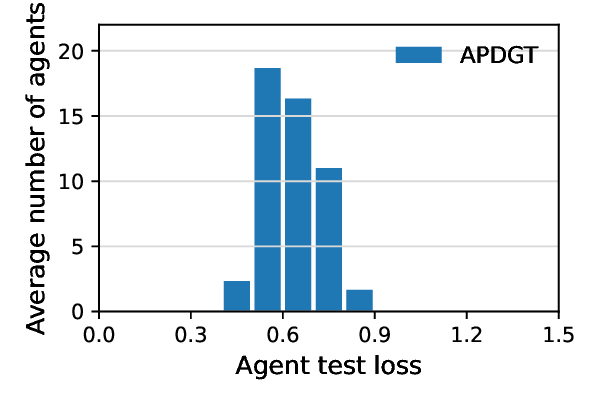}
}
\subfloat[DGT-average]{
	\label{subfig:average}
	\includegraphics[width=0.45\linewidth]{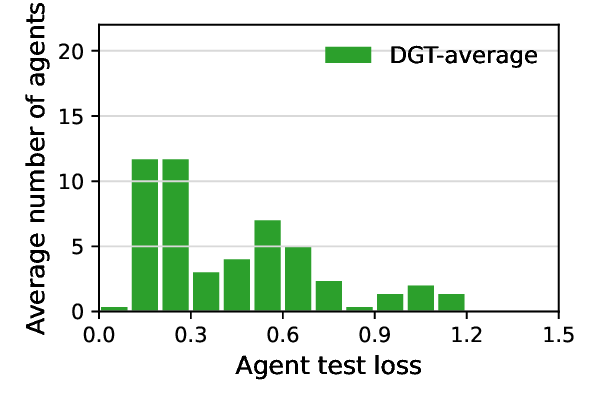}
}

\subfloat[DGT-$q$FFL ($q = 2$)]{
	\label{subfig:qffl2}
	\includegraphics[width=0.45\linewidth]{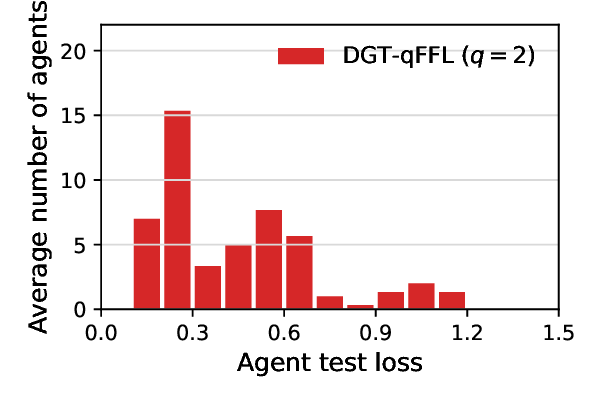}
}
\subfloat[DGT-$q$FFL ($q = 5$)]{
	\label{subfig:qffl5}
	\includegraphics[width=0.45\linewidth]{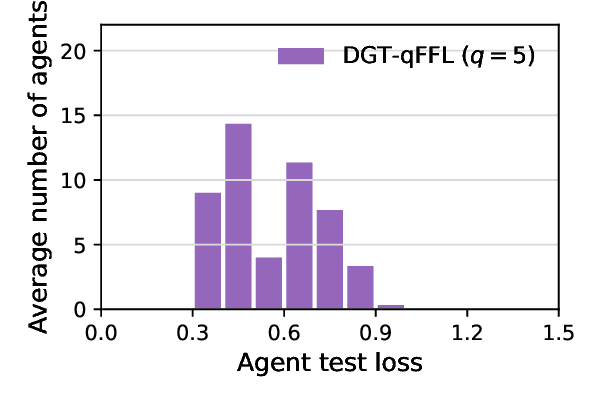}
}

\caption{Distributions of test losses for different agents on Fashion-MNIST under SSG.}
\label{fig:ssg}
\end{figure}

In the final experiment, we investigate the impact of different data distributions on a ring network with $50$ agents. 
Table~\ref{tab:split} reports the fairness metrics achieved by the competing algorithms under three sample-split regimes. 
An interesting observation is that DGT-$q$FFL does not uniformly outperform DGT-average under ESG and KSG-3. 
This behavior can be understood as a consequence of the trade-off between fairness-oriented reweighting and the optimization difficulty induced by the $q$FFL surrogate in \eqref{opt:qffl}. 
By weighting the local gradient of agent $i$ by $[f_i (\vx)]^q$, DGT-$q$FFL emphasizes high-loss agents, but simultaneously induces greater disparity in gradient scales and potentially worsens the conditioning of decentralized optimization. 
Under mildly heterogeneous regimes such as ESG, where local objectives are already well aligned, the fairness gain is limited and may be outweighed by this additional optimization difficulty. 
As data heterogeneity increases, high local losses become more informative of genuine disparities among agents, making the reweighting of DGT-$q$FFL more favorable. 
Nevertheless, APDGT consistently delivers the best performance across all tested settings. 
These results offer additional empirical support for the benefits of the finite-max model~\eqref{opt:fair-max} in promoting fairness across the network.

\begin{table}[ht]
\caption{Fairness metrics on Fashion-MNIST under different sample-split regimes.}
\label{tab:split}
\centering
\small
\renewcommand{\arraystretch}{1.10}
\begin{tabular*}{\textwidth}{@{\hspace{\tabcolsep}\extracolsep{\fill}}llccc@{\hspace{\tabcolsep}}}
	\hline
	Split & Method & CVaR$_{0.9}$ loss & Worst-agent loss & Worst-agent error \\
	\hline
	ESG & DGT-average & $0.509\pm0.021$ & $0.518\pm0.014$ & $0.183\pm0.011$ \\
	ESG & DGT-$q$FFL ($q=2$) & $0.808\pm0.007$ & $0.830\pm0.014$ & $0.296\pm0.003$ \\
	ESG & DGT-$q$FFL ($q=5$) & $0.796\pm0.013$ & $0.806\pm0.014$ & $0.228\pm0.010$ \\
	ESG & APDGT & $0.508\pm0.020$ & $0.518\pm0.014$ & $0.179\pm0.008$ \\
	\hline
	KSG-3 & DGT-average & $0.670\pm0.103$ & $0.717\pm0.110$ & $0.264\pm0.046$ \\
	KSG-3 & DGT-$q$FFL ($q=2$) & $0.826\pm0.282$ & $0.867\pm0.291$ & $0.334\pm0.138$ \\
	KSG-3 & DGT-$q$FFL ($q=5$) & $0.666\pm0.058$ & $0.706\pm0.054$ & $0.255\pm0.021$ \\
	KSG-3 & APDGT & $0.621\pm0.068$ & $0.659\pm0.070$ & $0.237\pm0.030$ \\
	\hline
	SSG & DGT-average & $1.046\pm0.035$ & $1.116\pm0.039$ & $0.435\pm0.011$ \\
	SSG & DGT-$q$FFL ($q=2$) & $1.032\pm0.045$ & $1.094\pm0.058$ & $0.418\pm0.005$ \\
	SSG & DGT-$q$FFL ($q=5$) & $0.825\pm0.017$ & $0.877\pm0.028$ & $0.338\pm0.005$ \\
	SSG & APDGT & $0.788\pm0.017$ & $0.826\pm0.028$ & $0.317\pm0.008$ \\
	\hline
\end{tabular*}
\end{table}

\section{Conclusion}

\label{sec:conclusion}

This paper studied decentralized finite-max optimization problem~\eqref{opt:fair-max}, a formulation that arises naturally when the performance of a network is dictated by its worst-performing component rather than by an aggregate or average criterion.  
The principal obstacle is not nonsmoothness alone, but the global constraint hidden in the selection of the worst local functions. 
The conventional simplex formulation in \eqref{opt:fair-simplex} makes this dependence explicit, yet simultaneously introduces an operation that cannot be carried out by agents possessing only local information. 
By regularizing the inner maximization and exposing its dual structure, we replace this global simplex operation with a common dual variable and locally generated clipped weights. 
The resulting formulation preserves the worst-case character of the model in the vanishing-regularization limit while admitting a genuinely decentralized implementation.

Building upon this structural construction, APDGT combines alternating primal-dual updates with two complementary gradient-tracking mechanisms. 
One tracker coordinates the normalization of the adaptive weights, whereas the other reconstructs the gradient contributed by the near-active agents. 
The convergence analysis reveals how these quantities interact with network disagreement and with the progressively sharper geometry of the regularized dual function. 
A Lyapunov argument establishes the boundedness of iterates and the descent of stationarity residuals. 
The near-active gradient-hull relation transfers these properties back to the original nonsmooth problem. 
Consequently, every accumulation point of the sequence generated by APDGT is Clarke-stationary, and an $\varepsilon$-stationary point can be obtained in $O (\varepsilon^{-3})$ iterations.

In the numerical experiments, the convex benchmark demonstrates that APDGT remains competitive with methods tailored to convex finite-max problems, while retaining applicability to the substantially broader nonconvex regime. 
The practical advantages of APDGT are further validated in a decentralized learning task on real-world datasets, where it significantly outperforms existing techniques. 
These results corroborate the effectiveness of our algorithm in controlling the worst-agent performance under heterogeneous data distributions. 

Several directions merit further investigation. 
Extending our framework to stochastic settings would make it more suitable for large-scale learning, while directed and time-varying networks would broaden its applicability beyond symmetric communication architectures. 

\bibliographystyle{abbrv}
\bibliography{APDGT_library}

\addcontentsline{toc}{section}{References}

\end{document}